\newcommand{\ka}{\kappa}
\def\timenow{\@tempcnta\time
  \@tempcntb\@tempcnta
  \divide\@tempcntb60
  \ifnum10>\@tempcntb0\fi\number\@tempcntb
  \multiply\@tempcntb60
  \advance\@tempcnta-\@tempcntb:\ifnum10>\@tempcnta0\fi\number\@tempcnta}
\newcommand{\Ec}[1]{\ensuremath{\mathbb{E} [#1]}}
\newcommand{\gh}{\mathrm{GH}}
\newcommand{\dgh}{d_{\gh}}
\newcommand{\eqdist}{\ensuremath{\stackrel{d}{=}}}
\renewcommand{\E}[1]{\ensuremath{\mathbb{E} \left[#1 \right]}}
\newcommand{\Prob}[1]{\ensuremath{\mathbb{P} \left(#1 \right)}}
\renewcommand{\I}[1]{\ensuremath{\mathbbm{1}_{ \{ #1 \} }}}
\renewcommand{\subset}{\subseteq}
\newcommand{\convdist}{\ensuremath{\stackrel{d}{\rightarrow}}}
\newcommand{\equidist}{\ensuremath{\stackrel{d}{=}}}
\newcommand{\bl}{\ensuremath{\pmb{\ell}}}
\newcommand{\te}{\ensuremath{\tilde{\mathbf{e}}}}
\newcommand{\be}{\ensuremath{\mathbf{e}}}
\newcommand{\Dir}{\ensuremath{\mathrm{Dirichlet}}}
\newcommand{\Ga}{\ensuremath{\mathrm{Gamma}}}
\begin{document}

\title{Critical random graphs: limiting constructions and
  distributional properties\thanks{\emph{MSC 2000 subject classifications: primary 05C80; secondary 60C05.} \newline
\emph{Key words and phrases: random graph; real tree; scaling limit; Gromov--Hausdorff distance; Brownian excursion; continuum random tree; Poisson process; urn model}.  \newline L.A.B.\ was supported by an NSERC Discovery Grant throughout the research and writing of this paper.  C.G.\ was funded by EPSRC Postdoctoral Fellowship EP/D065755/1.}}
\author{L. Addario-Berry \and N. Broutin \and C. Goldschmidt}
\date{March 26, 2010}
\maketitle

\begin{abstract}
We consider the Erd\H{o}s--R\'enyi random graph $G(n,p)$ inside the critical window, where $p=1/n+\lambda n^{-4/3}$ for some $\lambda\in \R$. We proved in \cite{Us1} that considering the connected components of $G(n,p)$ as a sequence of metric spaces with the graph distance rescaled by $n^{-1/3}$ and letting $n \to \infty$ yields a non-trivial sequence of limit metric spaces $\mathcal C=(\mathcal C_1, \mathcal C_2, \dots)$. These limit metric spaces can be constructed from certain random real trees with vertex-identifications.  
For a single such metric space, we give here two equivalent constructions, both of which are in terms of more standard probabilistic objects.  The first is a global construction using Dirichlet random variables and Aldous' Brownian continuum random tree.  The second is a recursive construction from an inhomogeneous Poisson point process on $\R_+$.  
These constructions allow us to characterize the distributions of the masses and lengths in the constituent parts of a limit component when it is decomposed according to its cycle structure. In particular, this strengthens results of \citet{LuPiWi1994}  
by providing precise distributional convergence for the lengths of paths between kernel vertices and the length of a shortest cycle, within any fixed limit component. 
\end{abstract}

\section{Introduction}

The {\em Erd\H{o}s--R\'enyi random graph} $G(n,p)$ is the random graph on vertex set $\{1,2,\dots, n\}$ in which each of the $\binom n 2$ possible edges is present independently of the others with probability $p$. In the 50 years since its introduction \cite{erdos60evolution}, this simple model has given rise to a very rich body of mathematics. (See the books \cite{Bollobas2001,janson00random} for a small sample of this corpus.) In a previous paper \cite{Us1}, we considered the {\em rescaled global structure} of $G(n,p)$ for $p$ in the {\em critical window} -- that is, where $p=1/n+\lambda n^{-4/3}$ for some $\lambda \in \R$ -- when individual components are viewed as metric spaces with the usual graph distance.  (See \cite{Us1} for a discussion of the significance of the random graph phase transition and the critical window.)  The subject of the present paper is the asymptotic behavior of individual components of $G(n,p)$, again viewed as metric spaces, when $p$ is in the critical window. 

Let $\mathcal{C}_1^n, \mathcal C_2^n, \ldots$ be the
connected components of $G(n,p)$ listed in decreasing order of size, with ties 
broken arbitrarily.  Write $\mathcal C^n =
(\mathcal{C}_1^n, \mathcal C_2^n, \ldots)$ and write $n^{-1/3} \mathcal C^n$ to mean the sequence of components viewed as metric spaces with the graph distance in each multiplied by $n^{-1/3}$.  Let $d_{GH}$ be the Gromov--Hausdorff distance between two compact metric spaces (see \cite{Us1} for a definition). 

\begin{thm}[\cite{Us1}] \label{thm:clcrg}
There exists a random sequence $\mathcal C$ of compact metric spaces such that 
as $n \to \infty$,
\[
n^{-1/3} \mathcal C^n \convdist \mathcal C,
\]
where the convergence is in distribution in the distance $d$ specified by
\[
d(\mathcal A, \mathcal B) = \left(\sum_{i=1}^\infty d_{GH}(\mathcal A_i, \mathcal B_i)^4 \right)^{1/4}.
\]
\end{thm}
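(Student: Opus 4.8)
\medskip
\noindent\emph{Proof strategy.} Since this is the main result of \cite{Us1}, I will only outline the approach. The idea is to route everything through the depth-first exploration of $G(n,p)$ and to recover, in the limit, both the tree structure of each component and the finitely many cycle-creating ``surplus'' edges it carries. First I would run a depth-first search on $G(n,p)$, producing a lattice walk $Z^n$ whose excursions above its running minimum encode the components in the order they are discovered. By Aldous' description of the critical multiplicative coalescent, $n^{-1/3}Z^n_{\lfloor n^{2/3}t\rfloor}$ converges in distribution to the reflection at its infimum of the parabolic-drift Brownian motion $W^\lambda_t = W_t+\lambda t-t^2/2$, and, jointly, the surplus edges discovered during the search converge to a unit-rate Poisson point process in the region under the graphs of the limiting excursions. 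In particular the ordered component sizes, rescaled by $n^{-2/3}$, converge to the ordered excursion lengths $(\gamma_1,\gamma_2,\dots)$, which satisfy $\sum_i\gamma_i^2<\infty$ almost surely.

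Next I would describe the candidate limit. Given an excursion $e$ of length $\gamma$, form the real tree $\mathcal T_e$ coded by $e$ in the usual way --- quotient $[0,\gamma]$ by the pseudometric $d_e(s,t)=e(s)+e(t)-2\min_{u\in[s\wedge t,\,s\vee t]}e(u)$, rooted at the image of $0$, with canonical projection $\pi_e$ --- and, for each Poisson point $(t_j,y_j)$ lying under $e$, let $\rho_j$ be the ancestor of $\pi_e(t_j)$ at height $y_j$ and glue $\rho_j$ to $\pi_e(t_j)$. Equipping the quotient with the induced shortest-path metric gives $\mathcal C_i$ (taking $e$ to be the $i$-th longest excursion), and one sets $\mathcal C=(\mathcal C_1,\mathcal C_2,\dots)$.

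The heart of the argument is convergence of any fixed number of the largest components. Conditioning on component sizes and surpluses, the $i$-th component is, given its size $m$ and surplus $k$, a uniformly random connected graph with $m$ vertices and $m-1+k$ edges; its depth-first tree together with its $k$ back-edges is then a version of the depth-first tree of a uniform labelled tree on $m$ vertices, size-biased by (essentially) the number of legal back-edge placements. Using the convergence of the exploration walk one shows that this rescaled depth-first tree converges for $d_{GH}$ to $\mathcal T_e$, that the rescaled heights and search-times of the back-edges converge to the Poisson points $(t_j,y_j)$, and that the size-biasing is, in the limit, exactly accounted for by conditioning $e$ to carry $k$ Poisson marks. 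Since the number of back-edges is tight and the map sending a rooted, finitely-marked compact metric space to the quotient obtained by identifying the marked pairs is continuous enough, one transfers convergence to the components, getting $n^{-1/3}(\mathcal C_1^n,\dots,\mathcal C_K^n)\convdist(\mathcal C_1,\dots,\mathcal C_K)$ for every fixed $K$. To upgrade this to convergence in the metric $d$, I would then prove a uniform tail bound $\sum_{i>K}\diam{n^{-1/3}\mathcal C_i^n}^4\to 0$ in probability as $K\to\infty$, uniformly in $n$; this rests on the fact that a critical component of size $m$ has diameter $O(\sqrt m)$ with suitable tails, which combined with $\sum_i\gamma_i^2<\infty$ also shows $\sum_i\diam{\mathcal C_i}^4<\infty$ and explains the exponent $4$ (no smaller power would do).

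I expect the conditioning-and-gluing step to be the main obstacle. The depth-first tree of a component is not a conditioned Galton--Watson tree but a tilted one, and this tilt has to be tracked simultaneously with the parabolic drift; more seriously, one must show that shortest paths in the graph really do converge to shortest paths in the glued real tree --- not merely that two-point distances converge --- which means controlling the global geometry of the quotient rather than a single distance, and this is the delicate part of the argument.
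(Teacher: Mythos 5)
Your outline matches the strategy of \cite{Us1}: a canonical depth-first exploration, convergence (after Aldous) of the rescaled walk to the parabolic-drift Brownian motion with the back-edges converging jointly to a Poisson process under the excursions, vertex identification in the coded real trees as the description of the limit, a size-biasing/tilting argument for any fixed number of components, and a fourth-power tail bound on diameters to upgrade to the metric $d$. Two small points worth noting: metric convergence of the spanning tree does \emph{not} follow directly from convergence of the exploration (Lukasiewicz-type) walk --- one must separately control the height/contour process, which is a distinct estimate in \cite{Us1}; and you should track the factor $2$, since the paper works with $2\te$ and a rate-$\frac{1}{2}$ Poisson process precisely so that graph distances come out right. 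By contrast, the gluing step you single out as the main obstacle is handled in \cite{Us1} by a relatively clean continuity-of-gluing lemma for finitely many vertex identifications.
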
 
We refer to the individual metric spaces in the sequence $\mathcal C$ as the \emph{components} of $\mathcal C$. 
The proof of Theorem~\ref{thm:clcrg} relies on a decomposition of any connected labeled graph $G$ into two parts: a ``canonical'' spanning tree (see \cite{Us1} for a precise definition of this tree), and 
a collection of additional edges which we call \emph{surplus edges}. 
Correspondingly, the limiting sequence of metric spaces has a surprisingly simple description as a collection of random 
real trees (given below) in which certain pairs of vertices have been identified
(vertex-identification being the natural analog of adding a surplus
edge, since edge-lengths are converging to 0 in the limit).

In this paper, we consider the structure of the individual components of the limit $\mathcal C$ in greater detail. 
In the limit, these components have a scaling property which means that, in order to describe the distributional structure of a component, 
only the number of vertex identifications (which we also call the surplus) matters, and not the total mass of the tree in which the identifications take place. 
The major contribution of this paper is the description and justification of two construction procedures for building the components of $\mathcal C$ directly, conditional on their size and surplus.  The importance of these new procedures is that instead of relying on a decomposition of a component into a spanning tree and surplus, they rely on a decomposition according to the cycle structure, which from many points of view is much more natural.  

The procedure we describe first is based on glueing randomly rescaled Brownian CRT's along the edges of a random kernel (see Section \ref{gluedalong} for the definition of a kernel). This procedure is more combinatorial, and implicitly underlying it is a novel finite construction of a component of $G(n,p)$, by first choosing a random kernel and then random doubly-rooted trees which replace the edges of the kernel. (However, we do not spell out the details of the finite construction since it does not lead to any further results.) It is this procedure that yields the strengthening of the results of \citet*{LuPiWi1994}.

The second procedure contains Aldous' stick-breaking inhomogeneous Poisson process construction of the Brownian CRT 
as a special case. Aldous' construction, first described in \cite{aldous91crt1}, has seen numerous extensions and applications, among which the papers of \citet*{aldous94recursive,aldous2005weak,peres04lerw,schweinsberg2009loop} are notable. In particular, in the 
same way that the Brownian CRT arises as the limit of the uniform spanning tree in of $\Z^d$ for $d \geq 4$ (proved in \cite{schweinsberg2009loop}), we expect our generalization to arise as the scaling limit of the components of critical percolation in $\Z^d$ or the $d$-dimensional torus, for large $d$. 

Before we move on to the precise description of the constructions, we introduce them informally and discuss their relationship with various facts about random graphs and the Brownian continuum random tree.

\subsection{Overview of the results} 
A key object in this paper is Aldous' Brownian continuum random tree (CRT) \cite{aldous91crt1,aldous91crt2,aldous93crt3}.  In Section \ref{sec:constructions}, we will give a full definition of the Brownian CRT in the context of \emph{real trees coded by excursions}.  For the moment, however, we will simply note that the Brownian CRT is encoded by a standard Brownian excursion, and give a more readily understood definition using a construction given in \cite{aldous91crt2}.

\medskip
\noindent \textsc{Stick-breaking construction of the Brownian CRT.}  Consider an inhomogeneous Poisson process on $\R^+$ with instantaneous rate $t$ at $t \in \R^+$.  Let $J_1, J_2, \ldots$ be its inter-jump times, in the order they occur ($J_1$ being measured from 0).  Now construct a tree as follows.  First take a (closed) line-segment of length $J_1$.  Then attach another line-segment of length $J_2$ to a uniform position on the first line-segment.  Attach subsequent line-segments at uniform positions on the whole of the structure already created. Finally, take the closure of the object obtained.

\medskip
\noindent The canonical spanning tree appearing in the definition of a component of $\mathcal C$ is not the Brownian CRT, except when the surplus is 0; in general, its distribution is defined instead as a modification (via a change of measure) of the distribution of the Brownian CRT which favors trees encoded by excursions with a large area (see Section~\ref{sec:constructions} for details).  We refer to such a continuum random tree as a \emph{tilted} tree. One of the main points of the present work is to establish strong similarity relationships between tilted trees and the Brownian CRT which go far beyond the change of measure in the definition. 

Our first construction focuses on a \emph{combinatorial} decomposition of a connected graph into its cycle structure (kernel) and the collection of trees obtained by breaking down the component at the vertices of the kernel. In the case of interest here, the trees are randomly rescaled instances of Aldous' Brownian CRT. This is the first direct link between the components of $\mathcal C$ having strictly positive surplus and the Brownian CRT.

As we already mentioned, our second construction extends the stick-breaking construction of the Brownian CRT given above.  We prove that the tilted tree corresponding to a connected component of $\mathcal C$ with a fixed number of surplus edges can be built in a similar way: the difference consists in a bias in the lengths of the first few intervals in the process, but the rate of the Poisson point process used to split the remainder of $\R^+$ remains unchanged. It is important to note that an arbitrary bias in the first lengths does not, in general, give a consistent construction: if the distribution is not exactly right, then the initial, biased lengths will appear too short or too long in comparison to the intervals created by the Poisson process. Indeed, it was not {\em a priori} obvious to the authors that such a distribution must necessarily exist. The consistency of this construction is far from obvious and a fair part of this paper is devoted to proving it. In particular, the results we obtain for the combinatorial construction (kernel/trees) identify the correct distributions for the first lengths. Note in passing that the construction shows that the change of measure in the definition of tilted trees is entirely accounted for by biasing a (random) number of paths in the tree. 

The first few lengths mentioned above are the distances between vertices of the kernel of the component. One can then see the stick-breaking construction as jointly building the trees: each interval chooses a partially-formed tree with probability proportional to the sum of its lengths (the current mass), and then chooses a uniformly random point of attachment in that tree. We show that one can analyze this procedure precisely via a continuous urn process where the bins are the partial trees, each starting initially with one of the first lengths mentioned above. The biased distribution of the initial lengths in the bins ensures that the process builds trees in the combinatorial construction which are not only Brownian CRT's but also have the correct joint distribution of masses. The proof relies on a decoupling argument related to de Finetti's theorem \cite{Finetti1931,Aldous1983}.

\subsection{Plan of the paper} 
The two constructions we have just informally introduced are discussed precisely in Section~\ref{sec:constructions}. Along the way, Section~\ref{sec:constructions} also introduces many of the key concepts and definitions of the paper.  The distributional results are stated in Section~\ref{sec:distributional}.  The remainder of the document is devoted to proofs. In Section~\ref{sec:length_core} we derive the distributions of the lengths in a component of $\mathcal C$ between vertices of the kernel. The stick-breaking construction of a component of $\mathcal C$ is then justified in Section~\ref{subsec:recursive}. Finally, our results about the distributions of masses and lengths of the collection of trees obtained when breaking down the cycle structure at its vertices are proved in Section~\ref{sec:urns_distrib}.

\section{Two constructions}\label{sec:constructions}

Suppose that $G_m^p$ is a (connected) component of $G(n,p)$ conditioned to have size (number of vertices)
$m \le n$.  Theorem~\ref{thm:clcrg} entails that $G_m^p$,
with $m n^{-2/3} \to \sigma$ and $pn \to 1$ as $n \to \infty$ and distances
rescaled by $n^{-1/3}$, converges in distribution to some limiting metric space, 
in the Gromov--Hausdorff sense.  (We shall see that the scaling property mentioned above means 
in particular that it will suffice to consider the case $\sigma=1$.) 
We refer to this limiting metric space as ``a component of $\mathcal C$, conditioned to have total size $\sigma$''. 
From the description as a finite graph limit, it is clear that this distribution should not depend upon where in the sequence 
$\mathcal C$ the component appears, a fact we can also obtain by direct consideration of the limiting object (see below). 

\subsection{The viewpoint of Theorem \ref{thm:clcrg}: vertex identifications within a tilted tree.}\label{vitt}

In this section, we summarize the perspective taken in \cite{Us1} on the structure of a component of $\mathcal C$ conditioned to have total size $\sigma$, as we will need several of the same concepts in this paper. Our presentation in this section owes much to the excellent survey paper of \citet{legall05survey}.  A \emph{real tree} is a compact metric space $(\mathcal T, d)$ such that for all $x, y \in \mathcal T$, 
\begin{itemize}
\item there exists a unique geodesic from $x$ to $y$ i.e.\ there exists a unique isometry $f_{x,y}: [0,d(x,y)] \to \mathcal T$ such that $f_{x,y}(0) = x$ and $f_{x,y}(d(x,y)) = y$.  The image of $f_{x,y}$ is called $\llbracket x,y\rrbracket$;
\item the only non-self-intersecting path from $x$ to $y$ is  $\llbracket x,y\rrbracket$ i.e.\ if $q: [0,1] \to \mathcal T$ is continuous and injective and such that $q(0) = x$ and $q(1) = y$ then $q([0,1]) = \llbracket x,y\rrbracket$.
\end{itemize}
In practice, the picture to have in mind is of a collection of line-segments joined together to make a tree shape, with the caveat that there is nothing in the definition which prevents ``exotic behavior'' such as infinitary branch-points or uncountable numbers of leaves (i.e.\ elements of $\mathcal T$ of degree 1).  Real trees encoded by excursions are the building blocks of the metric spaces with which we will deal in this paper, and we now explain them in detail.  By an \emph{excursion}, we mean a
continuous function $h:[0, \infty) \to \R^+$ such that $h(0) = 0$,
there exists $\sigma < \infty$ such that $h(x) = 0$ for $x > \sigma$
and $h(x) > 0$ for $x \in (0, \sigma)$.  Define a distance $d_h$ on
$[0, \infty)$ via
\[
d_h(x, y) = h(x) + h(y) - 2 \inf_{x \wedge y \le z \le x \vee y} h(z)
\]
and use it to define an equivalence relation: take $x \sim y$ if
$d_h(x,y) = 0$.  Then the quotient space $\mathcal T_h := [0, \sigma]
/ \sim$ endowed with the distance $d_h$ turns out to be a real tree.  We will always think of $\mathcal T_h$ as being rooted at the equivalence class of 0.  When $h$ is random, we call $\mathcal T_h$ a random real tree. The excursion $h$ is often referred to as the \emph{height process} of the tree $\mathcal T_h$. We note that $\mathcal T_h$ comes equipped with a natural \emph{mass measure}, which is the measure induced on $\mathcal T_h$ from Lebesgue measure on $[0,\sigma]$.  By a real tree of \emph{mass} or \emph{size} $\sigma$, we mean a real tree built from an excursion of length $\sigma$.

Aldous' \emph{Brownian
 continuum random tree (CRT)}
\cite{aldous91crt1,aldous91crt2,aldous93crt3} is the real tree
obtained by the above procedure when we take $h = 2 \be$,
where $\be = (\be(x), 0 \le x \le 1)$ is a standard Brownian
excursion.

The limit $\mathcal C = (\mathcal C_1, \mathcal C_2, \ldots)$ is a
sequence of compact metric spaces constructed as follows.  First, take
a standard Brownian motion $(W(t), t \ge 0)$ and use it to define the
processes $(W^{\lambda}(t), t \ge 0)$ and $(B^{\lambda}(t), t \geq 0)$
via
\[
W^{\lambda}(t)  = W(t) + \lambda t - \frac{t^2}{2}, \qquad\mbox{and}\qquad
B^{\lambda}(t)  = W^{\lambda}(t) - \min_{0 \le s \le t} W^{\lambda}(s).
\]
Now take a Poisson point process in $\R^+ \times \R^+$ with intensity
$\frac 1 2 \mathscr{L}_2$, where $\mathscr{L}_2$ is Lebesgue measure in the plane.  The excursions of
$2 B^{\lambda}$ away from zero correspond to the limiting components of $\mathcal C$: each excursion
encodes a random real tree which ``spans'' its component, and the Poisson points which fall under the process (and, in particular, under specific excursions) tell us where to make vertex-identifications in these trees in order to obtain the components themselves. 
We next explain this vertex identification rule in detail.

For a given excursion $h$, let $A_h=\{(x,y): 0 \leq x \leq \sigma, 0 \leq y \leq
h(x)\}$ be the set of points under $h$ and above the $x$-axis.  Let
\[
\ell(\xi)=\ell((x,y)) = \sup\{x' \leq x:y=h(x')\} \qquad
\mbox{and}\qquad r(\xi)=r((x,y)) = \inf\{x' \geq x:y=h(x')\}
\]
be the points of $[0,\sigma]$ nearest to $x$ for which
$h(\ell(\xi))=h(r(\xi))=y$ (see Figure \ref{fig:exctree}).
It is now straightforward to describe how the points of a finite
pointset $\mathcal Q \subset A_h$ can be used to make
vertex-identifications: for $\xi \in \mathcal Q$, we simply identify the equivalence classes
$[x]$ and $[r(x)]$ in $\mathcal T_h$. \label{identification} (It should always be clear that the points we are dealing with in the metric spaces are equivalence classes, and we hereafter drop the square brackets.)  We write $g(h, \mathcal Q)$ for the resulting
``glued'' metric space; the tree metric is altered in the obvious way to accommodate the vertex identifications. 

To obtain the metric spaces in the sequence $\mathcal C$ 
from $2 B^{\lambda}$, we simply make the vertex identifications induced by the 
points of the Poisson point process in $\R^+ \times \R^+$ which fall below $2 B^{\lambda}$, and then 
rearrange the whole sequence in decreasing order of size.  The reader may be somewhat puzzled by the fact that we multiply the process $B^{\lambda}$ by 2 and take a Poisson point process of rate $\frac 1 2$.  It would seem more intuitively natural to use the excursions of $B^{\lambda}$ and a Poisson point process of unit rate.  However, the lengths in the resulting metric spaces would then be too small by a factor of 2.  This is intimately related to the appearance of the factor 2 in the height process of the Brownian CRT.  Further discussion is outside the purview of this paper, but may be found in \cite{Us1}.

\begin{figure}[htb]
\centering
\begin{picture}(420,130)
\put(93,0){\includegraphics[scale=.7]{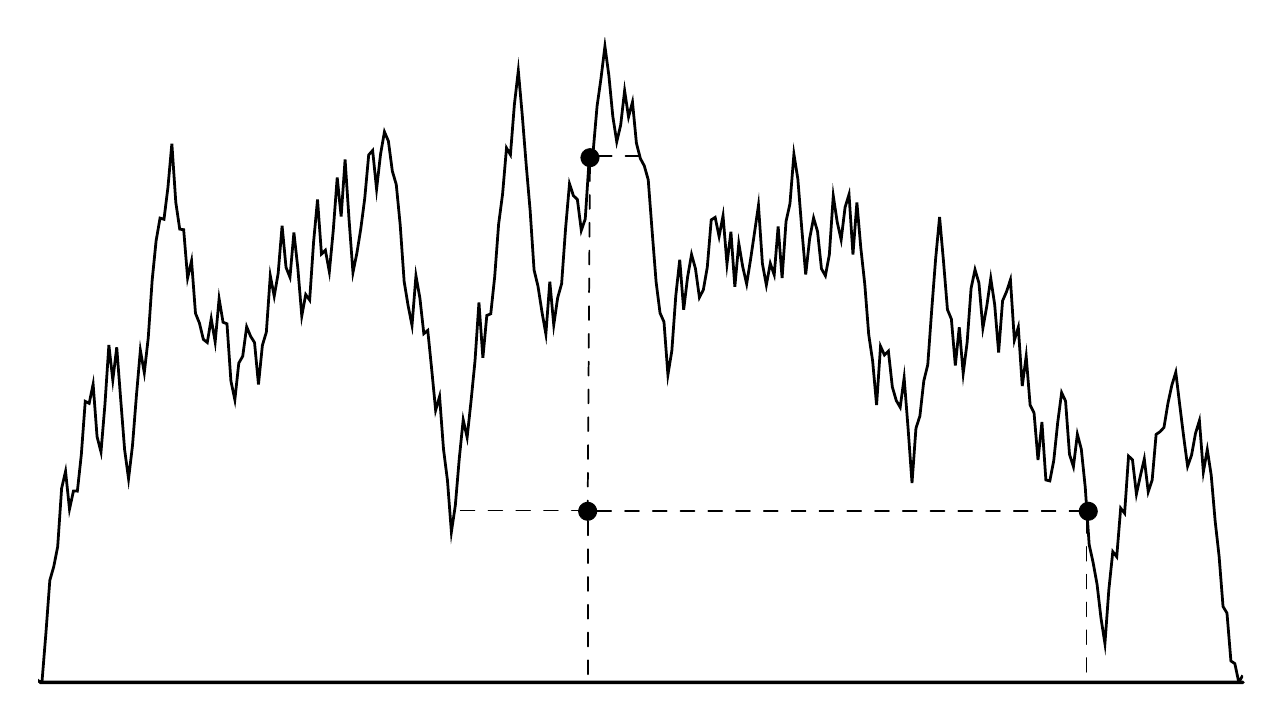}}
\put(208,-2){$x$}
\put(305,-2){$r(x)$}
\put(203,35){$\xi$}
\end{picture}
\caption{A finite excursion $h$ on $[0,1]$ coding a compact real tree $\mathcal T_h$. Horizontal
  lines connect points of the excursion which form equivalence classes in the tree. 
  The point $\xi=(x,y)$ yields the identification of the equivalence classes $[x]$ and $[r(x)]$, which are represented by the horizontal dashed lines.}
\label{fig:exctree}
\end{figure}

Above, we have described a way of constructing the {\em sequence} of  metric spaces $\mathcal C$. 
In order to see what this implies about a {\em single} component of $\mathcal C$, we must first explain the scaling property of the components $\mathcal C_k$ mentioned above. 
First, consider the excursions above 0 of the process $B^{\lambda}$.  
An excursion theory calculation (see \cite{Us1,aldous97brownian}) shows that, conditional on their lengths, 
the distributions of these excursions do not depend on their starting points.  
Write $\te^{(\sigma)}$ for such an excursion conditioned to have length $\sigma$; in the case $\sigma = 1$,
we will simply write $\te$.  The distribution of $\te^{(\sigma)}$ is most easily described
via a change of measure with respect to the distribution of a Brownian
excursion $\be^{(\sigma)}$ conditioned to have length $\sigma$: for
any test function~$f$,
\begin{equation} \label{eqn:changemeas} \Ec{f(\te^{(\sigma)})} =
  \frac{\E{f(\be^{(\sigma)}) \exp \left( \int_0^{\sigma}
        \be^{(\sigma)}(x) dx \right)} }{\E{ \exp \left(
        \int_0^{\sigma} \be^{(\sigma)}(x) dx \right)}}.
\end{equation}
We refer to $\te^{(\sigma)}$ as a \emph{tilted excursion} and to the tree encoded by $2 \te^{(\sigma)}$ as a \emph{tilted tree}.  The scaling property derives from the fact that a Brownian excursion $\be^{(\sigma)}$ may be obtained from a standard Brownian excursion $\be$ by the transformation $\be^{(\sigma)}(\,\cdot\,) = \sqrt{\sigma} \be (\,\cdot\,/\sigma)$ (Brownian scaling).
Given $\te^{(\sigma)}$, write $\mathcal P$ for the points of a homogeneous Poisson point process of rate $\frac 1 2$ in the plane which fall under the excursion $2 \te^{(\sigma)}$.  Note
that as a consequence of the homogeneity of $\mathcal P$, conditional on $\te^{(\sigma)}$, the number
of points $|\mathcal P|$ has a Poisson distribution with mean
$\int_0^{\sigma} \te^{(\sigma)}(x) dx$.

Let $\be^{(\sigma)}(\,\cdot\,) = \sqrt{\sigma} \be (\,\cdot\,/\sigma)$ as above. 
Then for any test function $f$, by the tower law for conditional expectations we have \label{scalingproperty}
\begin{align*}
\E{f(\te^{(\sigma)})~|~| \mathcal P |=k} 
& = \frac{\E{f(\te^{(\sigma)}) \I{| \mathcal P | = k}}}{\Prob{| \mathcal P |=k}} \\
& = \frac{\E{\E{f(\te^{(\sigma)}) \I{| \mathcal P | = k}~|~\te^{(\sigma)}~}}}{\E{\Prob{| \mathcal P |=k~|~\te^{(\sigma)}}}} \\
& =  \frac{\E{f(\te^{(\sigma)}) \cdot
\frac 1 {k!} \big(\int_0^{\sigma} \te^{(\sigma)}(u) du\big)^k \exp\big(- \int_0^{\sigma} \te^{(\sigma)}(u) du \big) }}
{\E{ \frac1 {k!}\big( \int_0^{\sigma} \te(u) du\big)^k \exp\big(-\int_0^{\sigma} \te^{(\sigma)}(u) du\big)  }} \\
& =  \frac{\E{f(\be^{(\sigma)}) \cdot \big(\int_0^{\sigma} \be^{(\sigma)}(u) du\big)^k}}
{\E{ \big(\int_0^{\sigma} \be^{(\sigma)}(u) du\big)^k }} \\
& = \frac{\E {f(\sqrt{\sigma}\be(\,\cdot\,/\sigma)) \big(\int_0^1 \be(u) du\big)^k} }{\E{ \big(\int_0^1 \be(u) du\big)^k } }.
\end{align*}
Thus, conditional on $|\mathcal P|=k$, the behavior of the tilted excursion of length $\sigma$ may be recovered from that of a tilted excursion of length $1$ by a simple rescaling. 
Throughout the paper, in all calculations that are conditional on the number of Poisson points $|\mathcal P|$, we will take $\sigma=1$ to simplify notation, 
and appeal to the preceding calculation to recover the behavior for other values of $\sigma$.

Finally, suppose that the excursions of $B^{\lambda}$ have ordered
lengths $Z_1 \ge Z_2 \ge \ldots \ge 0$.  Then conditional on their
sizes $Z_1, Z_2, \ldots$ respectively, the metric spaces $\mathcal
C_1, \mathcal C_2, \ldots$ are independent and $\mathcal C_k$ is distributed as $
g(2 \te^{(Z_k)}, \mathcal P)$, $k \ge 1$.  
It follows that one may construct an object distributed as a component of $\mathcal{C}$ conditioned to have size $\sigma$ as follows. 

\medskip
\fbox{%
\mbox{\begin{minipage}[t]{13.5cm}
\textsc{Vertex identifications within a tilted tree}
\begin{enumerate}
\item Sample a tilted excursion $\te^{(\sigma)}$.
\item Sample a set $\mathcal P$ containing 
a $\mathrm{Poisson}\left(\int_0^{\sigma} \te^{(\sigma)}(x) dx\right)$ number of points uniform in the area under $2 \te^{(\sigma)}$.
\item Output $g(2 \te^{(\sigma)}, \mathcal P)$.
\end{enumerate}
\end{minipage}}
}
\medskip

\noindent The validity of this method is immediate from Theorem~\ref{thm:clcrg} and from (\ref{eqn:changemeas}). 

\subsection{Randomly rescaled Brownian CRT's, glued along the edges of a random kernel.}\label{gluedalong}
Before explaining our first construction procedure, we introduce some essential terminology. Our description relies first on a ``top-down'' decomposition of a graph into its cycle structure along with pendant trees, and second on the reverse ``bottom-up'' reconstruction of a component from a properly sampled cycle structure and pendant trees.

\medskip
\noindent\textbf{Graphs and their cycle structure.}
The number of surplus edges, or simply \emph{surplus},
of a connected labeled graph $G=(V,E)$ is defined to be 
$s=s(G)=|E|-|V|+1$. 
In particular, trees have surplus~$0$. We say that the connected graph $G$ is \emph{unicylic} if $s=1$, and \emph{complex} if $s\ge 2$. Define the
{\em core} (sometimes called the \emph{2-core}) $C=C(G)$ to be the
maximum induced subgraph of $G$ which has minimum degree two (so that, in
particular, if $G$ is a tree then $C$ is empty). Clearly the graph
induced by $G$ on the set of vertices $V\setminus V(C)$ is a
forest. So if $u \in V\setminus V(C)$, then there is a unique shortest
path in $G$ from $u$ to some $v \in V(C)$, and we denote this $v$ by
$c(u)$. We extend the function $c(\,\cdot\,)$ to the rest of $V$ by
setting $c(v)=v$ for $v \in V(C)$.

We next define the {\em kernel} $K=K(G)$ to be the multigraph obtained
from $C(G)$ by replacing all paths whose internal vertices all have
degree two in $C$ and whose endpoints have degree at least three in
$C$ by a single edge (see e.g.\ \cite{janson00random}).  If the
surplus $s$ is at most $1$, we agree that the kernel is empty;
otherwise the kernel has minimum degree three and precisely $s-1$ more
edges than vertices.  It follows that the kernel always has at most
$2s$ vertices and at most $3s$ edges.  
We write $\mathrm{mult}(e)$ for the number of copies of an edge $e$ in $K$. 
We now define $\ka(v)$ to be ``the closest bit of $K$ to $v$'', whether that
bit happens to be an edge or a vertex.  Formally, if $v \in V(K)$ we
set $\ka(v)=v$. If $v \in V(C)\setminus V(K)$ then $v$ lies in a path
in $G$ that was contracted to become some copy $e_k$ of an edge $e$ in
$K$; we set $\ka(v) = e_k$. If $v \in V(G) \setminus V(C)$ then we set
$\ka(v)=\ka(c(v))$. In this last case, $\ka(v)$ may be an edge or a
vertex, depending on whether or not $c(v)$ is in $V(K)$. The graphs
induced by $G$ on the sets $\kappa^{-1}(v)$ or $\kappa^{-1}(e_k)$ for
a vertex $v$ or an edge $e_k$ of the kernel $K$ are trees; we call
them \emph{vertex trees} and \emph{edge trees}, respectively, and
denote them $T(v)$ and $T(e_k)$. It will always be clear from
context to which graph they correspond. In each copy $e_k$ of an edge
$uv$, we distinguish in $T(e_k)$ the vertices that are adjacent to $u$
and $v$ on the unique path from $u$ to $v$ in the core $C(G)$, 
and thus view $T(e_k)$ as doubly-rooted. 

Before we define the corresponding notions of core and kernel for the
limit of a connected graph, it is instructive to discuss the
description of a finite connected graph $G$ given in \cite{Us1} (and alluded to 
just after Theorem \ref{thm:clcrg}), and to
see how the core appears in that picture.  Let $G=(V,E)$ be connected
and with ordered vertex set; without loss of generality, we may
suppose that that $V=[m]$ for some $m \geq 1$.  Let $T=T(G)$ be the
so-called \emph{depth-first tree}. This is a spanning tree of the
component which is derived using a certain ``canonical'' version of depth-first search.
(Since the exact nature of that procedure is not important here, we
refer the interested reader to \cite{Us1}.)  Let 
$E^*=E\setminus E(T)$ be the set of surplus edges which must be added
to $T$ in order to obtain $G$.  Let $V^*$ be the set of endpoints of
edges in $E^*$, and let $T_C(G)$ be the union of all shortest paths in
$T(G)$ between elements of $V^*$.  Then the core $C(G)$ is precisely
$T_C(G)$, together with all edges in $E^*$, and $T_C(G)=T(C(G))$.

\medskip
\noindent\textbf{The cycle structure of sparse continuous metric spaces.}
Now consider a real tree $\mathcal T_h$ derived from an excursion $h$,
along with a finite pointset $\mathcal Q \subset A_h$ which specifies
certain vertex-identifications, as described in the previous section.
Let $\mathcal Q_x = \{x:\xi = (x,y) \in \mathcal Q\}$ and let $\mathcal
Q_r = \{r(x) : \xi=(x,y) \in \mathcal Q\}$, both viewed as sets of points
of $\mathcal T_h$. We let $T_C(h,\mathcal Q)$ be the union of all
shortest paths in $\mathcal T_h$ between vertices in the set $\mathcal
Q_x \cup \mathcal Q_r$. Then $T_C(h,\mathcal Q)$ is a subtree of
$\mathcal T_h$, with at most $2|\mathcal Q|$ leaves (this is essentially 
the {\em pre-core} of Section \ref{precore}). We define the
{\em core} $C(h,\mathcal Q)$ of $g(h,\mathcal Q)$ to be the metric
space obtained from $T_C(h,\mathcal Q)$ by identifying $x$ and
$r(x)$ for each $\xi=(x,y) \in \mathcal Q$.  We obtain the {\em
  kernel} $K(h,\mathcal Q)$ from the core $C(h,\mathcal Q)$ by replacing each
maximal path in $C(h,\mathcal Q)$ for which all points but the
endpoints have degree two by an edge. For an edge $uv$ of
$K(h,\mathcal Q)$, we write $\pi(uv)$ for the path in $C(h,\mathcal
Q)$ corresponding to $uv$, and $|\pi(uv)|$ for its length.

For each $x$, let $c(x)$ be the nearest point of
$T_C(h,\mathcal Q)$ to $x$ in $\mathcal T_h$. In other words, $c(x)$
is the point of $T_C(h, \mathcal Q)$ which minimizes $d_h(x,c(x))$.  The nearest bit $\kappa(x)$ of $K(h,\mathcal Q)$ to $x$ 
is then defined in an analogous way to the definition for finite graphs. 
For a vertex $v$ of $K(h,\mathcal Q)$, we define the \emph{vertex tree} $T(v)$ 
to be the subgraph of $g(h,\mathcal Q)$ induced by the points in $\kappa^{-1}(v)=\{x:c(x) = v\}$ 
and the \emph{mass} $\mu(v)$ as the Lebesgue measure of $\kappa^{-1}(v)$.  Similarly, for an edge
$uv$ of the kernel $K(h,\mathcal Q)$ we define the \emph{edge tree}
$T(uv)$ to be the tree induced by $\kappa^{-1}(uv)=\{x:c(x) \in \pi(uv), c(x)\neq u, c(x)\neq v\}\cup
\{u,v\}$ and write $\mu(uv)$ for the Lebesgue measure of $\kappa^{-1}(uv)$.
The two points $u$ and $v$ are considered as distinguished in
$T(uv)$, and so we again view $T(uv)$ as doubly-rooted. 
It is easily seen that these sets are countable unions
of intervals, so their measures are well-defined.  Figures~\ref{fig:reduced_tree} and
\ref{fig:contkernel} illustrate the above definitions.

\begin{figure}[htb]
\centering
\begin{picture}(420,130)
\put(0,0){\includegraphics[scale=.7]{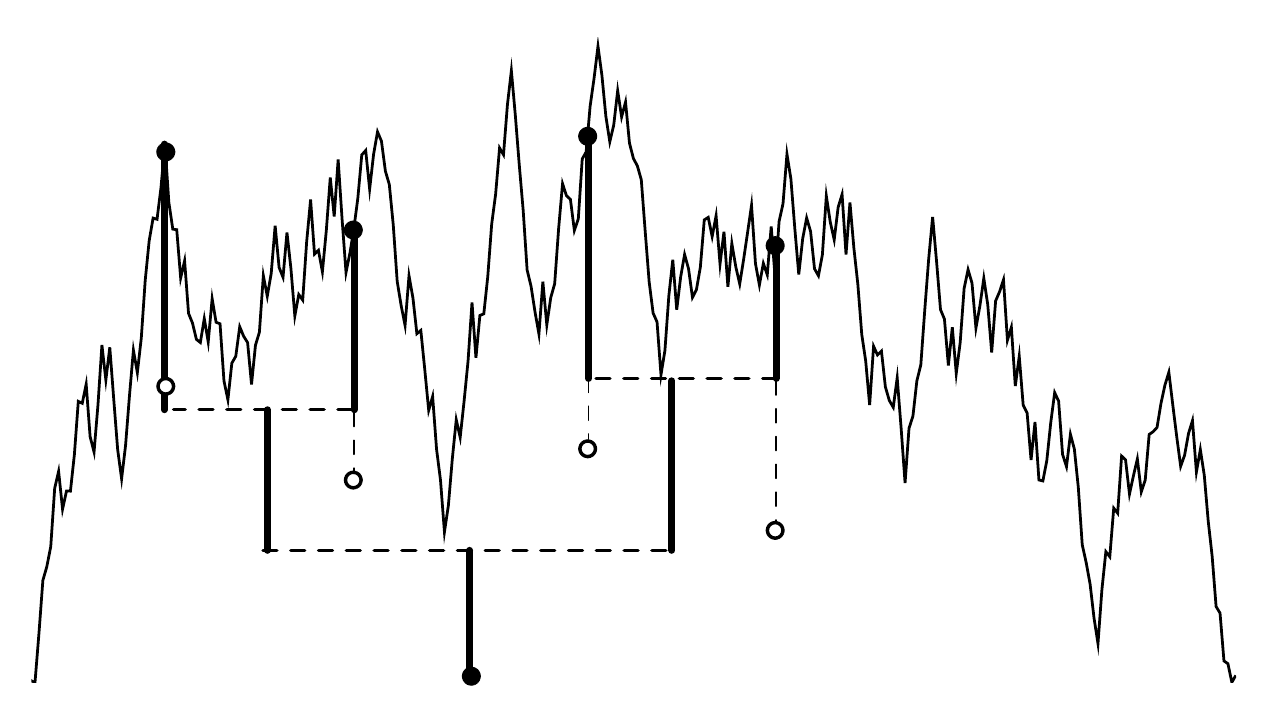}}
\put(280,0){\includegraphics[scale=.7]{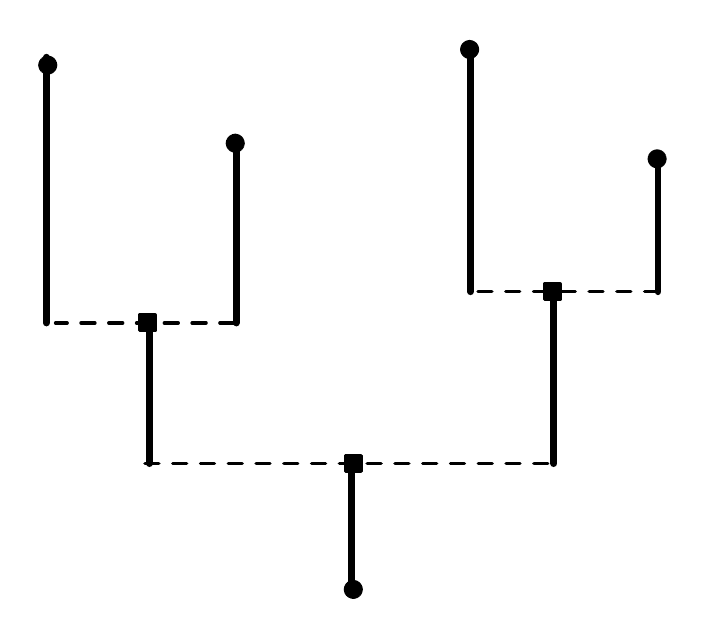}}
\put(28,57){$a$}
\put(63,45){$b$}
\put(121,49){$c$}
\put(147,33){$d$}
\put(288,120){$A$}
\put(330,100){$B$}
\put(370,123){$C$}
\put(409,100){$D$}
\put(303,66){$1$}
\put(350,40){$2$}
\put(385,72){$3$}
\end{picture}
\caption{\label{fig:reduced_tree}An excursion $h$ and the \emph{reduced tree} which is the subtree $T_R(h,\mathcal Q)$ of $\mathcal T_h$ spanned by the root and the leaves $A, B, C, D$ corresponding to the pointset $\mathcal Q=\{a, b, c, d\}$ (which has size $k=4$). The tree $T_R(h, \mathcal Q)$ is a combinatorial tree with edge-lengths.  It will be important in Section~\ref{sec:length_core} below.  It has $2k$ vertices: the root, the leaves and the branch-points $1,2,3$. The dashed lines have zero length. }
\end{figure}

\begin{figure}[htb]
\begin{picture}(420,100)
\put(-7,0){\includegraphics[scale=.7]{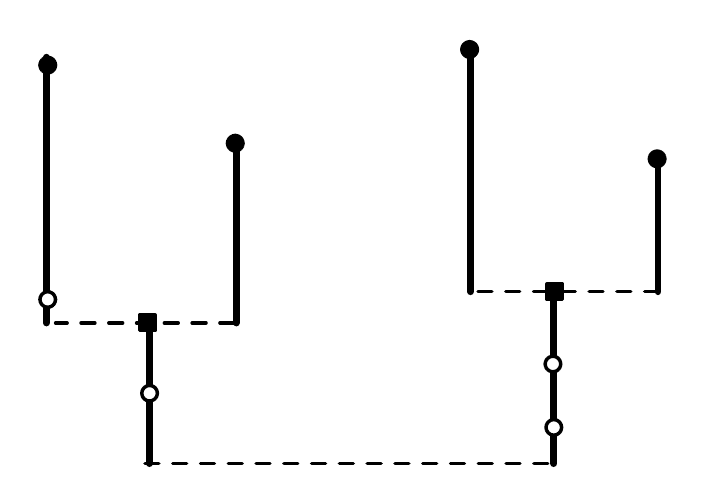}}
\put(0,95){$A$}
\put(42,76){$B$}
\put(90,98){$C$}
\put(119,75){$D$}
\put(6,42){$a$}
\put(27,23){$b$}
\put(108,25){$c$}
\put(108,13){$d$}
\put(23,42){$1$}
\put(105,48){$3$}
\put(263,0){\includegraphics[scale=.7]{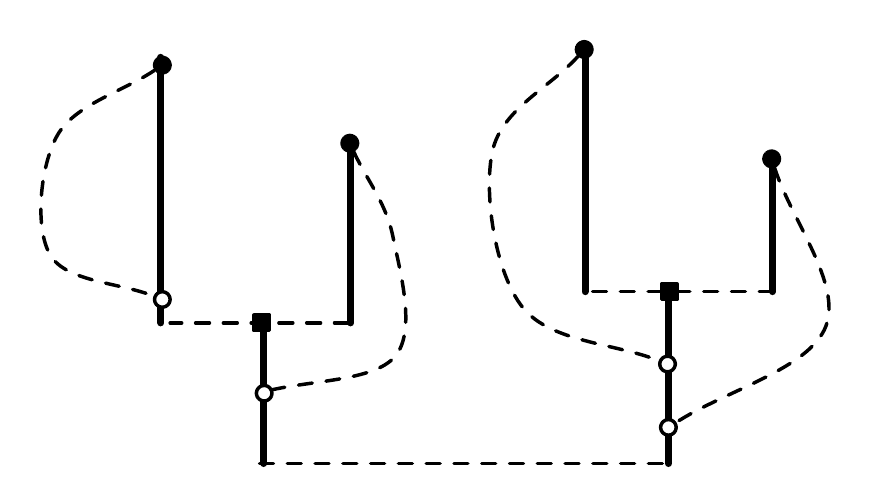}}
\put(292,95){$A$}
\put(324,76){$B$}
\put(380,98){$C$}
\put(419,75){$D$}
\put(299,42){$a$}
\put(308,25){$b$}
\put(400,25){$c$}
\put(390,13){$d$}
\put(315,42){$1$}
\put(392,48){$3$}
\put(135,0){\includegraphics[scale=.7]{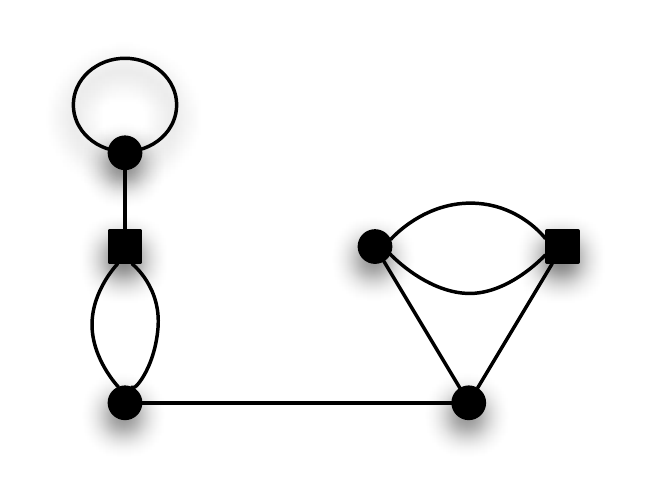}}
\put(150,65){$a$}
\put(165,50){$1$}
\put(160,5){$b$}
\put(230,5){$d$}
\put(250,58){$3$}
\put(210,58){$c$}
\end{picture}
\caption{From left to right: the tree $T_C(h, \mathcal Q)$ from the excursion and pointset of Figure~\ref{fig:reduced_tree}, the corresponding kernel $K(h, \mathcal Q)$ and core $C(h, \mathcal Q)$. The dashed lines indicate vertex identifications.}
\label{fig:contkernel}
\end{figure}

\medskip
\noindent\textbf{Sampling a limit connected component.}
There are two key facts for the first construction procedure. The first is that, for a random metric space $g(2\te, \mathcal P)$ as above, 
conditioned on its mass, an edge tree $T(uv)$ is distributed as a Brownian CRT of mass $\mu(uv)$ and the vertex trees are almost surely empty.  The second is that the kernel $K(2\te, \mathcal P)$ is almost surely 3-regular 
(and so has $2(|\mathcal P|-1)$ vertices and $3(|\mathcal P|-1)$ edges).  Furthermore, for any 3-regular $K$ with $t$ loops,
	\begin{equation} \label{eqn:kerneldist}
	\probC{K(2\te, \mathcal P)=K}{|\mathcal P|} \propto \Bigg(2^t\prod_{e \in E(K)} \mathrm{mult}(e)!\Bigg)^{-1}.
	\end{equation}
(The fact that any reasonable definition of a limit kernel must be 3-regular is obvious from earlier results 
 -- see \cite[][Theorem 7]{janson93birth}, \cite[][Theorem 4]{LuPiWi1994}, and \cite[][Theorems 5.15 and 5.21]{janson00random}. 
Also, (\ref{eqn:kerneldist}) is the limit version of a special case of \cite[][Theorem 7 and (1.1)]{janson93birth}, and is alluded to in \cite{janson00random}, page 128, 
and so is also unsurprising.)
These two facts, together with some additional arguments, will justify the validity of our first procedure for building a component of $\mathcal C$ conditioned to have size $\sigma$, which we now describe. 

Let us condition on $|\mathcal{P}| = k$. As explained before, it then suffices to describe the construction of a component of standard mass $\sigma=1$. 

\medskip
\fbox{%
\begin{minipage}[t]{13.5cm}
\textsc{Procedure 1: randomly rescaled Brownian CRT's}
\begin{itemize}
\item If $k=0$ then let the component simply be a Brownian CRT of total mass 1. 
\item If $k=1$ then let $(X_1,X_2)$ be a $\mathrm{Dirichlet}(\frac 12,\frac 12)$ random vector, let $\mathcal{T}_1,\mathcal{T}_2$ be independent Brownian CRT's of sizes $X_1$ and $X_2$, and identify the root of $\mathcal{T}_1$ with a uniform leaf of $\mathcal T_1$ and with the root of $\mathcal T_2$, to make a ``lollipop'' shape.  
\item If $k \geq 2$ then let $K$ be a random 3-regular graph with $2(k-1)$ vertices chosen according to the probability measure in (\ref{eqn:kerneldist}), above. 
\begin{enumerate}
\item Order the edges of $K$ arbitrarily as $e_1,\ldots,e_{3(k-1)}$, with $e_i=u_iv_i$. 
\item Let $(X_1,\ldots,X_{3(k-1)})$ be a $\mathrm{Dirichlet}(\frac 12,\ldots,\frac 12)$ random vector (see Section \ref{gamdir} for a definition). 
\item Let $\mathcal{T}_1,\ldots,\mathcal{T}_{3(k-1)}$ be independent Brownian CRT's, with tree $\mathcal{T}_i$ having mass $X_i$, and for each $i$ let $r_i$ and $s_i$ be the root and a uniform leaf of $\mathcal T_i$. 
\item Form the component by replacing edge $u_iv_i$ with tree $\mathcal{T}_i$, identifying $r_i$ with $u_i$ and $s_i$ with $v_i$, for $i=1,\ldots,3(k-1)$.
\end{enumerate}
\end{itemize}
\end{minipage}
}
\medskip

\noindent In this description, as in the next, the cases $k=0$ and $k=1$ seem inherently different from the cases $k \geq 2$. In particular, the lollipop shape in the case $k=1$ is a kind of ``rooted core'' that will arise again below. 
For this construction technique, the use of a rooted core seems to be inevitable 
as our methods require us to work with doubly rooted trees. 
Also, as can be seen from the above description, doubly rooted trees are natural objects in the context of a kernel. However, they seem more artificial for graphs whose kernel is empty. 
Finally, we shall see that the use of a rooted core also seems necessary for the second construction technique in the case $k=1$, a fact which is more mysterious to the authors.

\paragraph{An aside: the forest floor picture.}
It is perhaps interesting to pause in order to discuss a rather
different perspective on real trees with vertex identifications.
Suppose first that $\mathcal{T}$ is a Brownian CRT.  Then the path from the root
to a uniformly-chosen leaf has a Rayleigh distribution  \cite{aldous91crt2} (see Section~\ref{gamdir} for a definition of the Rayleigh distribution).  This also the
distribution of the local time at 0 for a standard Brownian bridge.
There is a beautiful correspondence between reflecting Brownian bridge
and Brownian excursion given by \citet{bertoinpitman94path} (also
discussed in \citet{aldouspitman94mappings}), which explains the
connection.

Let $B$ be a standard reflecting Brownian bridge.  Let $L$ be the local time at 0 of $B$, defined by
\[
L_t = \lim_{\epsilon \to 0} \frac{1}{2 \epsilon} \int_0^t \I{B_s < \epsilon} ds.
\] 
Let $U = \sup \{ t \leq 1: L_t = \frac{1}{2} L_1\}$ and let
\[
K_t = \begin{cases} L_t & \text{for $0 \leq t \leq U$} \\
                                    L_1 - L_t & \text{for $U \leq t \leq 1$.}
           \end{cases}
\]

\begin{thm}[\citet{bertoinpitman94path}]
The random variable $U$ is uniformly distributed on $[0,1]$.  Moreover, $X := K + B$
is a standard Brownian excursion, independent of $U$.  Furthermore,
\[
K_t = \begin{cases}
          \min_{t \leq s \leq U} X_s & \text{for $0 \leq t \leq U$} \\
          \min_{U \leq s \leq t} X_s & \text{for $U \leq t \leq 1$.}
          \end{cases}
\]
In particular, $B$ can be recovered from $X$ and $U$.
\end{thm}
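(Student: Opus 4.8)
I would first dispose of the structural claims, which do not require any distributional input. Recall that the local time $L$ is continuous, non-decreasing, constant on each excursion interval of $B$ away from $0$, and that its Stieltjes measure $dL$ is carried by the zero set $\mathcal Z=\{t:B_t=0\}$. Hence $U=\sup\{t\le 1:L_t=\frac{1}{2}L_1\}$ lies in $\mathcal Z$, and $K$ is continuous, non-decreasing on $[0,U]$, non-increasing on $[U,1]$, with $K_0=K_1=0$ and $K_U=\frac{1}{2}L_1=\max_{[0,1]}K$. For $t\le U$ let $[g_t,d_t]$ be the excursion interval of $B$ containing $t$ (a single point if $t\in\mathcal Z$); since $U\in\mathcal Z$ we get $d_t\le U$, and $L$ is constant on $[g_t,d_t]$, so using $B\ge 0$, $L$ non-decreasing and $B_{d_t}=0$,
\[
\min_{t\le s\le U}X_s=\min_{t\le s\le U}(B_s+L_s)=L_{d_t}=L_t=K_t,
\]
and the case $t\ge U$ is symmetric with $L_1-L$ in place of $L$. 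This is the asserted formula. Since it exhibits $K$, hence $B=X-K$, as a deterministic functional of $(X,U)$, and since $U$ is recovered as the rightmost point at which $K$ attains its maximum, the map $B\mapsto(X,U)$ is invertible; in particular $B$ is recovered from $(X,U)$ once we know their joint law.

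\textbf{The main step: the joint law of $(X,U)$.} I would prove the equivalent inverse statement: if $X$ is a standard Brownian excursion and $U\sim\mathrm{Unif}[0,1]$ is independent of $X$, and $K,B$ are built from $(X,U)$ by the running-minimum formula, then $B$ is a reflecting Brownian bridge, $K$ is its local time at $0$ on $[0,U]$ and $L_1-K$ on $[U,1]$, and (total local time of $B$, its halfway-local-time point) $=(2X_U,U)$. Condition on $U=u$ and on the height $a:=X_u$. By the Bismut--Williams decomposition of the Brownian excursion at an independent uniform time, conditionally on $\{U=u,\ X_u=a\}$ the two fragments $(X_t)_{0\le t\le u}$ and $(X_{1-t})_{0\le t\le 1-u}$ are independent, each a Bessel$(3)$ bridge from $0$ to $a$, of duration $u$ and $1-u$ respectively. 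Now I would invoke the bridge form of the Pitman--Lévy identity: for a $\mathrm{BES}(3)$ process $\rho$ started at $0$, Pitman's $2M-W$ theorem gives $\rho_t=2M_t-W_t$ and $\min_{s\ge t}\rho_s=M_t$, whence $\rho_t-\min_{s\ge t}\rho_s=M_t-W_t$, which by Lévy's theorem is a reflecting Brownian motion with local time $M_t$; conditioning on the value of $\rho$ at time $u$ pins the accumulated local time $\min_{t\le s\le u}\rho_s\big|_{t=u}=\rho_u$, so the restricted future-minimum $J_t=\min_{t\le s\le u}\rho_s$ of a Bessel$(3)$ bridge $\rho$ from $0$ to $a$ over $[0,u]$ satisfies: $\rho-J$ is a reflecting Brownian bridge of length $u$ conditioned to have total local time $a$, and $J$ is its local time. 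Applying this to each fragment of $X$ (with $J=K$) and gluing at $U$ — where the two local-time contributions $a+a$ match $L_1=2K_U=2X_U$ — shows that, conditionally on $(u,a)$, $B=X-K$ is a reflecting Brownian bridge of length $1$ conditioned on total local time $2a$ whose halfway point is $u$.

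\textbf{The obstacle, and an alternative.} What remains, and where I expect the real work to lie, is the distributional matching needed to remove the conditioning: one must check that for a reflecting Brownian bridge the joint law of (total local time, halfway-local-time point) equals, after the change of variables $\ell=2a$, the joint law of $(X_U,U)$ for a standard excursion $X$ and independent uniform $U$, i.e.\ the law with density at $(a,u)$ equal to the classical density of $X_u$ at $a$. After integrating out $a$, this amounts to the total local time of a reflecting Brownian bridge having the Rayleigh-type law of the maximum of a Brownian bridge (Lévy's identity for bridges), together with the conditional law of the halfway point given the total local time being the one read off from the excursion decomposition. The Bismut--Williams decomposition and the Pitman--Lévy identity are both standard, but carrying them out for \emph{bridges} — where pinning the endpoints and pinning the accumulated local time interact with the split at $U$ — and verifying that the resulting split point is exactly the uniform $U$ one started from, requires careful bookkeeping with Bessel-bridge and local-time densities. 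A more robust alternative is to prove the discrete analogue first: for a reflected simple-random-walk bridge, the transformation is an explicit bijection between such paths carrying a marked ``halfway return to $0$'' and excursion-type paths carrying a marked vertex (a cycle-lemma-type argument), with local time counting returns to $0$; the theorem then follows by a Donsker-type scaling limit, sidestepping the continuous local-time subtleties at the cost of a limiting argument.
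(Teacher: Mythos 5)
The paper does not actually prove this theorem: it is imported verbatim from \citet{bertoinpitman94path} and used as a black box in the ``forest floor'' discussion, so there is no in-paper argument to compare yours against. Judging the attempt on its own terms, the soft part is correct and cleanly argued: the identity $\min_{t\le s\le U}X_s=L_t$ for $t\le U$ follows exactly as you say from $X=L+B$ on $[0,U]$, the monotonicity of $L$, the non-negativity of $B$, and the observation that the right endpoint $d_t$ of the excursion interval containing $t$ satisfies $d_t\le U\in\mathcal Z$; the symmetric case and the recoverability of $B$ from $(X,U)$ are then immediate.

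The main step, however, has a genuine gap, and you are right to flag it. Two specific issues. First, what you call the ``bridge form of the Pitman--L\'evy identity'' is not a corollary of Pitman's $2M-W$ theorem: Pitman's future-minimum identity $\min_{s\ge t}\rho_s=M_t$ uses the infinite-horizon minimum of the unconditioned $\mathrm{BES}(3)$ process, whereas you need $\min_{t\le s\le u}\rho_s$ for a $\mathrm{BES}(3)$ \emph{bridge} on $[0,u]$ pinned at $\rho_u=a$. These two minima do not coincide, and the claim that $\rho-J$ is ``a reflecting Brownian bridge conditioned to have total local time $a$'' with $J$ as its local time is precisely the nontrivial content of the Bertoin--Pitman result; it cannot be obtained simply by conditioning the unconditioned identity on $\rho_u=a$. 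Second, even granting that conditional statement, you must verify that the joint law of (total local time at $0$, halfway-local-time point) for a reflecting Brownian bridge equals that of $(2X_U,U)$ for an independent excursion/uniform pair, and nothing in the sketch establishes that the two integrations against the Markov decomposition of $X$ at time $U$ actually agree; this is exactly the ``careful bookkeeping with Bessel-bridge and local-time densities'' you defer. Your proposed discrete bijection plus Donsker limit is a legitimate alternative route (closer in spirit to cycle-lemma/Vervaat-type arguments), but as written the proof is an outline of a plausible strategy rather than a complete argument.
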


So we can think of $\mathcal T$ with its root and uniformly-chosen leaf
as being derived from $X$ and $U$ ($U$ tells us which leaf we
select).  Now imagine the vertices along the path from root to leaf
as a ``forest floor", with little CRT's rooted along its
length.  The theorem tells us that this is properly coded by a
reflecting Brownian bridge.  Distances above the
forest floor in the subtrees are coded by the sub-excursions above 0 of the reflecting bridge; distances along the forest floor are measured in terms of its local time at 0.
This perspective seems natural in the context of the doubly-rooted randomly rescaled CRT's that appear in our second limiting picture. 

There seems to us to be a (so far non-rigorous) connection between this perspective and another technique that has been used for studying random graphs with fixed surplus 
or with a fixed kernel. This technique is to first condition on the core, and then examine the trees that hang off the core. It seems likely that one could directly prove 
that some form of depth- or breadth-first random walk ``along the trees of a core edge'' converges to reflected Brownian motion. In the barely supercritical case 
(i.e.~in $G(n,p)$ when $p=(1+\epsilon(n))/n$ and $n^{1/3}\epsilon(n)\to \infty$ but $\epsilon(n) = o(n^{-1/4})$), \citet*{ding2009} have shown that the ``edge trees'' of the largest component of $G(n,p)$ may 
essentially be generated by the following procedure: start from a path of length given by a geometric with parameter $\epsilon$, then attach to each vertex an independent Galton--Watson tree with Poisson$(1-\epsilon)$ progeny. (We refer the reader to the original paper for a more precise formulation.) The formulation of an analogous result that holds within the critical window seems to us a promising route to 
such a convergence to reflected Brownian motion.

\medskip

We now turn to the second of our constructions for a limiting component conditioned on its size.

\subsection{A stick-breaking construction, run from a random core.}

One of the beguiling features of the Brownian CRT is that it can be constructed in so many different ways.  Here, we will focus on the stick-breaking construction discussed in the introduction.
%
%
%
Aldous~\cite{aldous93crt3} proves that the tree-shape and $2n-1$ branch-lengths created by running this procedure for $n$ steps have the same distribution as the tree-shape and $2n-1$ branch-lengths of the subtree of the Brownian CRT spanned by $n$ uniform points and the root.  This is the notion of  ``random finite-dimensional distributions'' (f.d.d.'s) for continuum random trees.  The sequence of these random f.d.d.'s specifies the distribution of the CRT \cite{aldous93crt3}.
Let $\mathcal A_n$ be the real tree obtained by running the above procedure for $n$ steps (viewed as a metric space). We next prove that $\mathcal A_n$ converges to the Brownian CRT.  This theorem is not new; it simply re-expresses the result of \citet{aldous93crt3} in the Gromov--Hausdorff metric. We include a proof for completeness.
\begin{thm}\label{thm:aldousrecursive}
As $n \to \infty$, $\mathcal A_n$ converges in distribution to the Brownian CRT in the Gromov--Hausdorff distance $\dgh$. 
\end{thm}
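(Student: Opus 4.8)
The plan is to deduce this from the result of \citet{aldous93crt3} recalled just above, via a coupling that realises all the spanned subtrees inside a single Brownian CRT. Let $\mathcal T$ be the Brownian CRT, with root $\rho$ and mass measure $\mu$, and let $U_1, U_2, \ldots$ be i.i.d.\ samples from $\mu$. For each $n$ set $R_n = \bigcup_{i=1}^{n} \llbracket \rho, U_i \rrbracket$, the subtree of $\mathcal T$ spanned by $\rho$ and $U_1, \ldots, U_n$, equipped with the restriction of the metric of $\mathcal T$. By the cited result, $R_n$ and $\mathcal A_n$ have the same tree-shape and the same joint law of branch-lengths, hence the same law as isometry classes of metric spaces; and since the f.d.d.\ construction is consistent in $n$, the $R_n$ are genuinely nested subsets of one copy of $\mathcal T$. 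It therefore suffices to prove that $R_n \to \mathcal T$ almost surely in $\dgh$, because then $\mathcal A_n \equidist R_n \to \mathcal T$ a.s.\ forces $\mathcal A_n \convdist \mathcal T$.

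Since $R_n \subseteq \mathcal T$, the Gromov--Hausdorff distance is controlled by the Hausdorff distance computed inside $\mathcal T$:
\[
\dgh(R_n, \mathcal T) \;\le\; \sup_{x \in \mathcal T} \inf_{y \in R_n} d(x,y).
\]
So I would first check that the $U_i$ are a.s.\ dense in $\mathcal T$. This uses that $\mu$ has full support: if $B \subseteq \mathcal T$ is nonempty and open, its preimage under the canonical projection $[0,1] \to \mathcal T$ is open and nonempty, hence of positive Lebesgue measure, so $\mu(B) > 0$ and $\Prob{U_i \notin B \text{ for all } i} = 0$. Intersecting over a countable basis of open balls (a countable union of null events) shows that almost surely every nonempty open subset of $\mathcal T$ contains some $U_i$.

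The remaining step is a compactness argument. Fix $\eps > 0$. As $\mathcal T$ is compact it is covered by finitely many balls $B(x_1, \eps/2), \ldots, B(x_m, \eps/2)$, and by density there are indices $i_1, \ldots, i_m$ with $d(x_j, U_{i_j}) < \eps/2$ for each $j$. Then for all $n \ge N := \max_j i_j$ we have $U_{i_1}, \ldots, U_{i_m} \in R_n$, so any $x \in \mathcal T$ lies in some $B(x_j,\eps/2)$ and hence satisfies $\inf_{y \in R_n} d(x,y) \le d(x,x_j) + d(x_j,U_{i_j}) < \eps$. Thus $\sup_{x \in \mathcal T}\inf_{y\in R_n}d(x,y) \le \eps$ for $n \ge N$, and letting $\eps \downarrow 0$ gives $R_n \to \mathcal T$ a.s.\ in $\dgh$, as required.

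The argument is essentially routine once \citet{aldous93crt3} is invoked; the only points needing care are the passage from ``same tree-shape and branch-lengths in law'' to ``same law as metric spaces'' (where one uses that a finite real tree is determined up to isometry by its combinatorial shape and its edge-lengths), and the consistency in $n$ of Aldous' construction, which is precisely what legitimises placing every $R_n$ inside one CRT so that the \emph{almost sure} convergence --- rather than merely convergence of each finite-dimensional marginal --- is available to push through to convergence in distribution.
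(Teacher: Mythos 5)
Your proof is correct, and it takes a genuinely different route from the one in the paper. The paper's argument works entirely on the level of contour functions: it views $\mathcal A_n$ as an ordered tree with contour process $f_n$, invokes Theorem 20 and Corollary 22 of \citet{aldous93crt3} together with Skorohod's representation theorem to get $\|f_n - 2\mathbf{e}\|_\infty \to 0$ a.s., and then applies Le Gall's Lemma 2.4 (that uniform convergence of contour functions implies Gromov--Hausdorff convergence of the coded real trees) to conclude. You instead exploit the f.d.d.\ statement directly: you realise the spanned subtrees $R_n$ as nested compact subsets of a single ambient CRT $\mathcal T$, so that $\dgh(R_n,\mathcal T)$ is bounded by the (one-sided) Hausdorff distance $\sup_{x\in\mathcal T}\inf_{y\in R_n} d(x,y)$, and you deduce that this tends to zero a.s.\ from the a.s.\ density of $\{U_i\}$ in $\mathcal T$ (which itself follows from full support of the mass measure plus compactness). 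Transferring to $\mathcal A_n$ then only uses $\mathcal A_n \equidist R_n$ marginally in $n$, which is all that convergence in distribution requires. What your route buys is self-containedness and elementarity: it avoids contour processes, the stronger convergence-of-contour-functions theorem, Skorohod, and the contour-to-$\dgh$ transfer lemma, relying only on the f.d.d.\ equality, compactness of $\mathcal T$, and the observation that $\mu$ has full support. What the paper's route buys is that contour-process convergence is a somewhat stronger and more flexible statement (e.g.\ it is the form one typically wants when analysing functionals of the tree via its height process), and it slots directly into the machinery the paper already uses elsewhere. One small stylistic remark: the consistency-in-$n$ of Aldous' construction, which you mention, is not actually needed for your argument --- the nesting $R_1 \subseteq R_2 \subseteq \cdots \subseteq \mathcal T$ is immediate from the definition $R_n = \bigcup_{i\le n}\llbracket \rho, U_i\rrbracket$, and for the final step only the one-dimensional marginal equality $\mathcal A_n \equidist R_n$ is used.
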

\begin{proof}
Label the leaves of the tree $\mathcal A_n$ by the index of their time of addition (so the leaf added at time $J_1$ has label 1, and so on). 
With this leaf-labeling, $\mathcal A_n$ becomes an ordered tree: the first child of an internal node is the one containing the smallest labeled leaf. 
Let $f_n$ be the ordered contour process of $\mathcal A_n$, that is the function (excursion) $f_n:[0,1] \to [0,\infty)$ obtained by recording the distance from the root when 
traveling along the edges of the tree at constant speed, so that each edge is traversed exactly twice, 
the excursion returns to zero at time $1$, and the order of traversal of vertices respects the order of the tree. (See \cite{aldous93crt3} for rigorous details, and \cite{legall05survey} for further explanation.) 
Then by \cite{aldous93crt3}, Theorem 20 and Corollary 22 and Skorohod's representation theorem, 
there exists a probability space on which $\| f_n - 2\be \|_{\infty} \to 0$ almost surely as $n \to \infty$, where $\be$ is a standard Brownian excursion. But $2\be$ is the contour process of the Brownian CRT, 
and by \cite{legall05survey}, Lemma 2.4, convergence of contour processes in the $\|\cdot\|_\infty$ metric implies Gromov--Hausdorff convergence of compact real trees, so $\mathcal A_n$ converges to the Brownian CRT as claimed. 
\end{proof}
We will extend the stick-breaking construction to our random real trees with vertex-identifica-tions. The technical details can be found in Section~\ref{subsec:recursive} but we will summarize our results here.  In the following, let $\mathrm{U}[0,1]$ denote the uniform distribution on $[0,1]$.

\medskip
\fbox{%
\begin{minipage}[t]{13.5cm}
\textsc{Procedure 2: a stick-breaking construction}

First construct a graph with edge-lengths on which to build the component:
\begin{itemize}
\item \textsc{Case $k=0$.} Let $\Gamma=0$ and start the construction from a single point.  
\item \textsc{Case $k = 1$.}  Sample $\Gamma \sim \Ga(\frac 32,\frac 12)$ and $U \sim \mathrm{U}[0,1]$ independently.  Take two line-segments of lengths $\sqrt{\Gamma} U$ and $\sqrt{\Gamma} (1-U)$.  Identify the two ends of the first line-segment and one end of the second.  
\item \textsc{Case $k \ge 2$.}  Let $m = 3k-3$ and sample a kernel $K$ according to the distribution (\ref{eqn:kerneldist}).  Sample $\Gamma \sim \Ga(\frac{m+1}2,\frac 12)$ and $(Y_1,Y_2, \ldots,Y_m) \sim \Dir(1,1,\ldots,1)$ independently of each other and the kernel. Label the edges of $K$ by $\{1,2,\ldots,m\}$ arbitrarily and attach a line-segment of length $\sqrt{\Gamma}Y_i$ in the place of edge $i$, $1 \le i \le m$.  
\end{itemize}
Now run an inhomogeneous Poisson process of rate $t$ at time $t$, conditioned to have its first point at $\sqrt{\Gamma}$.  For each subsequent inter-jump time $J_i$, $i \ge 2$, attach a line-segment of length $J_i$ to a uniformly-chosen point on the object constructed so far.  Finally, take the closure of the object obtained.
\end{minipage}
}
\medskip

\noindent The definitions of the less common distributions used in the procedure appear in Section~\ref{gamdir}.

\begin{thm} \label{thm:recursiveconstruction}
Procedure 2 generates a component with the same distruction as $g(2\te, \mathcal P)$ conditioned to have $|\mathcal P| = k \ge 1$. 
\end{thm}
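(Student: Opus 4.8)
The plan is to prove Theorem~\ref{thm:recursiveconstruction} by reduction to the combinatorial construction of Procedure~1, whose validity will have been established in Section~\ref{sec:urns_distrib}, together with the stick-breaking characterization of the Brownian CRT from Theorem~\ref{thm:aldousrecursive}. The key observation is that Procedure~2 is, by design, nothing but Procedure~1 with each constituent Brownian CRT replaced by a stick-breaking construction running in parallel, so the whole thing is one big inhomogeneous Poisson process split into pieces. The work is to show that the three ingredients in the pre-processing step of Procedure~2 -- the overall scale $\sqrt{\Gamma}$ with $\Gamma \sim \Ga(\frac{m+1}{2},\frac12)$, the $\Dir(1,\ldots,1)$ split into edge-lengths $\sqrt{\Gamma}Y_i$, and the continuation via the rate-$t$ Poisson process conditioned on its first point being at $\sqrt{\Gamma}$ -- together reproduce exactly (i) the joint law of the kernel path-lengths $|\pi(e)|$ in $g(2\te,\mathcal{P})$ and (ii), conditional on those lengths, the joint law of the edge-tree masses $\mu(e)$, with each edge-tree being an independent Brownian CRT of the prescribed mass rooted at two points, one of which is uniform.

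I would carry this out in the following order. First, recall from the results of Sections~\ref{sec:length_core} and~\ref{sec:urns_distrib} the precise statement that, conditional on $|\mathcal{P}|=k$ and on the kernel being $K$ (with the stated distribution~(\ref{eqn:kerneldist})), the edge-trees $T(e)$ of $g(2\te,\mathcal{P})$ are independent Brownian CRTs with masses $(\mu(e))_{e\in E(K)}$ distributed as $\Dir(\frac12,\ldots,\frac12)$ scaled by the total mass $1$, each doubly-rooted at its root and an independent uniform leaf -- i.e.~precisely the output of Procedure~1. Second, run the stick-breaking construction of Theorem~\ref{thm:aldousrecursive} \emph{inside} each edge-tree: a Brownian CRT of mass $X_i$ is $\sqrt{X_i}\cdot$(a standard CRT), and by Brownian scaling of the stick-breaking process, a standard CRT at scale $\sqrt{X_i}$ is built by a rate-$t$ Poisson process whose first inter-jump time has the size-$X_i$-rescaled Rayleigh law. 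The doubly-rooted structure is handled via the forest-floor / Bertoin--Pitman picture already set up in the excerpt: choosing the uniform second root $s_i$ first, the path from $r_i$ to $s_i$ has the (rescaled) Rayleigh distribution, and conditionally on its length the rest of the tree is grown by attaching CRT-pieces along it -- which is exactly the stick-breaking continuation. Third, splice: the disjoint Poisson processes building the $m$ edge-trees, each started with its own Rayleigh-type first length $L_i$ and continuing at rate $t$ \emph{measured within that tree}, can be merged into a single rate-$t$ process on $\R_+$, because attaching a new segment ``to a uniform point of the whole structure so far'' has probability proportional to current total mass, exactly matching the superposition property of Poisson processes with rate $t$ at time $t$. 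This is where the decoupling / de Finetti argument of Section~\ref{subsec:recursive} enters: it shows the merged first lengths $(L_1,\ldots,L_m)$ decompose as $\sqrt{\Gamma}\cdot(Y_1,\ldots,Y_m)$ with $\Gamma$ a $\Ga(\frac{m+1}{2},\frac12)$ total and $(Y_i)$ an independent $\Dir(1,\ldots,1)$, and that conditioning the merged process to have its first point at time $\sqrt{\Gamma}=\sqrt{\sum L_i^2}$... I mean at the appropriate total, correctly reproduces the continuation. The cases $k=0$ and $k=1$ are then checked by hand: $k=0$ is Theorem~\ref{thm:aldousrecursive} verbatim, and $k=1$ uses that a $\Ga(\frac32,\frac12)$-scaled $\Dir(\frac12,\frac12)$ gives two $\Dir(\frac12,\frac12)$ masses and that the ``lollipop'' of Procedure~1 is built by stick-breaking with the loop-length and tail-length as the two biased initial sticks.

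The main obstacle, I expect, is the splicing step -- specifically, verifying that the merged inhomogeneous Poisson process conditioned on its first point at $\sqrt{\Gamma}$ genuinely has rate exactly $t$ at time $t$ for \emph{all} subsequent jumps, with no residual bias. The subtlety, flagged in the informal discussion in the excerpt, is that an arbitrary choice of the initial biased lengths $(L_i)$ would \emph{not} glue consistently onto a rate-$t$ continuation; only the specific $\Ga/\Dir$ combination makes the seam invisible. Making this precise requires the continuous-urn / de Finetti analysis of Section~\ref{subsec:recursive}: one views each partially-built edge-tree as a bin, shows that the sequence of which-bin-does-the-next-segment-join choices is exchangeable with the right de Finetti mixing measure exactly when the initial masses are $\Dir(\frac12,\ldots,\frac12)$-distributed (after the Gamma-scaling that turns the Poisson clock into a time-homogeneous one), and then reads off that the merged process is the claimed conditioned Poisson process. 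I would also need to be careful that Gromov--Hausdorff convergence is preserved under the merging and under taking closures -- but this follows from the uniform-approximation argument in the proof of Theorem~\ref{thm:aldousrecursive} applied simultaneously to finitely many bins, so it is routine rather than an obstacle.
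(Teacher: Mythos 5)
Your proposal runs into a circularity that is baked into the paper's logical architecture. You plan to reduce Theorem~\ref{thm:recursiveconstruction} to the validity of Procedure~1, which you take as ``established in Section~\ref{sec:urns_distrib}.'' But in the paper the dependency runs the other way: Section~\ref{sec:urns_distrib} proves Theorems~\ref{thm:limit_kernel} and~\ref{thm:unicyclic} (and hence Procedure~1) \emph{by analyzing the stick-breaking construction itself} as a continuous P\'olya urn, via Lemmas~\ref{lem:convurn} and~\ref{lem:convproc}. That analysis presupposes Theorem~\ref{thm:recursiveconstruction}: one first knows that Procedure~2 produces $g(2\te,\mathcal P)$, then observes that the stick-breaking dynamics decouple in the limit into independent CRTs glued along the kernel. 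Unless you supply an independent (e.g.\ genuinely finite-$n$ combinatorial) proof of Theorem~\ref{thm:limit_kernel} parts (c)--(e) -- which the paper remarks is possible but does not carry out -- your reduction is circular. The paper never treats Procedures~1 and~2 as two primitives to be shown equivalent; Procedure~2 is primitive and Procedure~1 is derived.

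The paper's actual route is shorter and more direct than yours. The kernel distribution (\ref{eqn:kerneldist}) is obtained from finite-$n$ results of Janson and of {\L}uczak--Pittel--Wierman passed through Skorohod coupling. The pre-core lengths are handled in Section~\ref{sec:length_core}, where the change of measure (\ref{eqn:changemeas}) conditioned on $|\mathcal P|=k$ is written explicitly as a $(\int_0^1\be)^k$-tilt captured by $k$ height-biased leaves; this gives the exact joint density of the reduced-tree shape and lengths (Lemma~\ref{lem:treeshapelengths}), then of the pre-core lengths after adding the path-points (Lemma~\ref{lem:pre-corelengths}), and finally the $\Ga(\frac{m+1}{2},\frac12)\times\Dir(1,\ldots,1)$ form after the change of variables (Lemma~\ref{lem:lengths}), giving (\ref{eqn:1})--(\ref{eqn:4}). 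The decisive step is Lemma~\ref{lem:decorating}: using the same tilt calculation, after sampling $n$ additional uniform points the joint density of tree-shape and lengths is again proportional to $(\sum_v\ell_v)\exp(-\frac12(\sum_v\ell_v)^2)$, i.e.\ \emph{identical} in form to Aldous' density (\ref{eqn:aldousdens}) for the Brownian CRT, so Aldous' own Lemma~21 argument (that this is exactly what the stick-breaking dynamics produce) applies verbatim. Nothing is ``merged'' or ``spliced''; the consistency is a single density computation. Finally, a.s.\ GH convergence of the $T^{(n)}$ to $\mathcal T_{2\te}$ plus stability of a finite number of vertex identifications (Lemma~21 of \cite{Us1}) gives the metric-space convergence. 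Your splicing step -- showing that independent per-edge Poisson processes superpose into one global rate-$t$ process with the seam invisible -- is precisely the difficult consistency claim that Lemma~\ref{lem:decorating} settles in one stroke; left as a heuristic, it is the hardest part of your argument and the one most likely to be wrong if the initial lengths had any distribution other than the exact one proved in Lemma~\ref{lem:lengths}. Two smaller points: $\sqrt\Gamma$ equals $\sum_i L_i(0)$, not $\sqrt{\sum_i L_i(0)^2}$ (you catch yourself mid-sentence but never state the correct identity), and the Bertoin--Pitman/forest-floor picture is explicitly flagged in the paper as a heuristic aside, not a tool used in any proof, so it cannot carry the weight you put on it for the doubly-rooted structure; the paper instead handles the second root as a path-point, i.e.\ a uniform mark on the root-to-leaf path, whose density contribution exactly cancels in Lemma~\ref{lem:pre-corelengths}.
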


This theorem implicitly contains information about the total length of the core of $g(2\te, \mathcal P)$: remarkably, conditional upon $|\mathcal P|$, the total length of the core has precisely the right distribution from which to ``start'' the inhomogeneous Poisson process, and our second construction hinges upon this fact.

Our stick-breaking process can also be seen as a continuous urn model, with the $m$ partially-constructed edge trees corresponding to the balls of $m$ different colors in the urn, the probability of adding to a particular edge tree being proportional to the total length of its line segments.  It is convenient to analyze the behavior of this continuous urn model using a discrete one. 
Let $N_1(n), N_2(n), \ldots, N_m(n)$ be the number of balls at step $n$ of P\'olya's urn model started with with one ball of each color, and evolving in such a way that every ball picked is returned to the urn along with two extra balls of the same color \cite{EgPo1923}. Then $N_1(0)=N_2(0)=\dots=N_m(0)=1$, and the vector 
\[\pran{\frac{N_1(n)}{m+2n},\ldots, \frac{N_m(n)}{m+2n}}\] 
converges almost surely to a limit which has distribution $\Dir(\frac 12,\ldots,\frac 12)$ (again, see Section \ref{gamdir} for the definition of this distribution) \cite[][Section~VII.4]{FellerVol2}, \cite[][Chapter~V, Section~9]{AthreyaNey}.
This is also the distribution of the proportions of total mass in each of the edge trees of the component, which is not a coincidence. 
We will see that the stick-breaking process can be viewed as the above P\'olya's urn model performed on the {\em coordinates} of the random vector which keeps track of the proportion of the total mass 
in each of the edge trees as the process evolves.

In closing this section, it is worth noting that the above construction techniques contain a strong dose of both probability and combinatorics. To wit: 
the stick-breaking procedure is probabilistic (but, given the links with urn models, certainly has a combinatorial ``flavor''); the choice of a random kernel conditional 
on its surplus seems entirely combinatorial (but can possibly also be derived from the probabilistic Lemma \ref{lem:treeshapelengths}, below); 
the fact that the edge trees are randomly rescaled CRT's can be derived via either a combinatorial or a 
probabilistic approach (we have taken a probabilistic approach in this paper). 

\section{Distributional results} \label{sec:distributional}
\subsection{Gamma and Dirichlet distributions}\label{gamdir}

Before moving on to state our distributional results, we need to
introduce some relevant notions about Gamma and Dirichlet
distributions.  Suppose that $\alpha, \gamma > 0$.  We say that a random variable has a
$\mathrm{Gamma}(\alpha,\theta)$ distribution if its density function on
$[0,\infty)$ is given by
\[
\frac{\theta^{\alpha} x^{\alpha-1}e^{-\theta x}}{\Gamma(\alpha)}, \quad\text{ where }\quad\Gamma(\alpha)=\int_{0}^{\infty} s^{\alpha-1}e^{-s}ds.
\]
The $\mathrm{Gamma}(1, \theta)$ distribution is the same as the exponential distribution with parameter $\theta$, denoted $\mathrm{Exp}(\theta)$.  Suppose that $a, b > 0$.  We say that a random variable has a $\mathrm{Beta}(a,b)$ distribution if it has density
\[
\frac{\Gamma(a+b)}{\Gamma(a)\Gamma(b)} x^{a-1}(1-x)^{b-1}
\]
on $[0,1]$.  We will make considerable use of the so-called
\emph{beta-gamma algebra} (see \cite{chaumontyor03exercises}, \cite{dufresne98betagamma}), which consists of a collection of
distributional relationships which may be summarized as
\[
\mathrm{Gamma}(\alpha, \theta) \equidist \mathrm{Gamma}(\alpha + \beta, \theta) \times \mathrm{Beta}(\alpha, \beta),
\]
where the terms on the right-hand side are independent.  We will state
various standard lemmas in the course of the text, as we require them.

We write 
\[
\Delta_n = \bigg\{{\bf x} = (x_1, x_2, \ldots, x_n): \sum_{j=1}^n x_j = 1,~x_j > 0, 1 \le j \le n\bigg\}
\]
for the $(n-1)$-dimensional simplex.  For $(\alpha_1, \dots, \alpha_n)
\in \Delta_n$, the $\mathrm{Dirichlet}(\alpha_1, \alpha_2, \ldots, \alpha_n)$ distribution on $\Delta_n$ has density
\[
\frac{\Gamma(\alpha_1 + \alpha_2 + \cdots + \alpha_n)}{\Gamma(\alpha_1) \Gamma(\alpha_2) \ldots \Gamma(\alpha_n)} \cdot \prod_{j=1}^n x_j^{\alpha_n - 1}, 
\]
with respect to $(n-1)$-dimensional Lebesgue measure $\mathscr L_{n-1}$ (so that, in particular, $x_n = 1 - x_1 - x_2 - \cdots - x_{n-1}$).  Fix any $\theta > 0$.  Then if $\Gamma_1, \Gamma_2, \ldots, \Gamma_n$ are independent with $\Gamma_j \sim \mathrm{Gamma}(\alpha_j, \theta)$ and we set
\[
(X_1, X_2, \ldots, X_n) = \frac{1}{\sum_{j=1}^n \Gamma_j} (\Gamma_1, \Gamma_2, \ldots, \Gamma_n),
\]
then $(X_1, X_2, \ldots, X_n) \sim \mathrm{Dirichlet}(\alpha_1, \alpha_2, \ldots, \alpha_n)$, independently of   $\sum_{j=1}^n \Gamma_j\sim \mathrm{Gamma}(\sum_{j=1}^n \alpha_j, \theta)$  (for a proof see, e.g., \cite{JoKoBa1994}).

A random variable has \emph{Rayleigh distribution} if it has density $s
e^{-s^2/2}$ on $[0,\infty)$.  Note that this is the distribution of
the square root of an $\mathrm{Exp}(1/2)$ random variable.  The
significance of the Rayleigh distribution in the present work is that, as mentioned above, it is the distribution of the distance between the root and a
uniformly-chosen point of the Brownian CRT (or, equivalently, between
two uniformly-chosen points of the Brownian CRT) \cite{aldous91crt2}.  We note
here, more generally, that if $\Gamma \sim \Ga(\frac{k+1}2,\frac 12)$ for $k
\ge 0$ then $\sqrt{\Gamma}$ has density
\begin{equation} \label{eqn:sqrtgamma}
\frac{1}{2^{(k-1)/2} \Gamma(\frac{k+1}2)} x^k e^{-x^2/2}.
\end{equation}
Note that in the case $k = 0$, we have $\Gamma \sim \Ga(\frac 12,\frac 12)$ which is the same as the $\chi^2_1$ distribution.  So, as is trivially verified, for $k = 0$, (\ref{eqn:sqrtgamma}) is the density of the modulus of a $\mathrm{Normal}(0,1)$ random variable.

The following relationship between Dirichlet and Rayleigh distributions will be important in the sequel.

\begin{prop} \label{prop:raydir}
Suppose that $(X_1, X_2, \ldots, X_n) \sim \mathrm{Dirichlet}(\frac 12, \ldots, \frac 12)$ independently of $R_1, R_2, \ldots, R_n$ which are i.i.d.\ Rayleigh random variables.  Suppose that $(Y_1, Y_2, \ldots, Y_n) \sim \mathrm{Dirichlet}(1,1,\ldots,1)$, independently of  $\Gamma \sim \mathrm{Gamma}(\frac{n+1}{2}, \frac 12)$.  Then
\[
(R_1 \sqrt{X_1}, R_2 \sqrt{X_2}, \ldots, R_n \sqrt{X_n}) \equidist \sqrt{\Gamma} \times (Y_1, Y_2, \ldots, Y_n).
\]
\end{prop}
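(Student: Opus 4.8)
The plan is to represent both sides in terms of independent Gamma variables and then recognize the resulting joint law. First I would recall the Gamma representation of the Dirichlet distribution from Section~\ref{gamdir}: write $(X_1,\ldots,X_n) = (G_1,\ldots,G_n)/S$ where the $G_j$ are independent $\Ga(\tfrac12,\tfrac12)$ and $S = \sum_j G_j \sim \Ga(\tfrac n2,\tfrac12)$, with the vector $(X_1,\ldots,X_n)$ independent of $S$. Since $R_i$ is the square root of an $\mathrm{Exp}(1/2) = \Ga(1,\tfrac12)$ variable, I would write $R_i = \sqrt{H_i}$ with the $H_i$ i.i.d.\ $\Ga(1,\tfrac12)$, all independent of everything else. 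The left-hand side coordinate $i$ is then $R_i\sqrt{X_i} = \sqrt{H_i X_i} = \sqrt{H_i G_i / S}$.

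The key observation is that $H_i G_i$, a product of independent Gamma variables of the ``right'' shapes, combines via the beta--gamma algebra. Concretely, I would condition on $S$ (equivalently work coordinatewise after pulling out the common factor $1/S$) and note that $H_i X_i$ has the law of a $\Ga(1,\tfrac12)$ variable times an independent $\mathrm{Beta}(\tfrac12,\tfrac{n-1}{2})$ variable (the marginal of $X_i$), or more cleanly: the vector $(H_1 X_1, \ldots, H_n X_n)$ has, jointly, a tractable law. The cleanest route is to square everything: it suffices to show
\[
(H_1 X_1, H_2 X_2, \ldots, H_n X_n) \equidist \Gamma \times (Y_1^2, \ldots, Y_n^2)\ \text{—}
\]
no wait, the right-hand side squared is $\Gamma \cdot (Y_1^2,\ldots,Y_n^2)$, which is \emph{not} of product-of-Dirichlet form, so I would instead keep the square roots and argue directly that the vector $(\sqrt{H_1 X_1},\ldots,\sqrt{H_n X_n})$, upon normalizing by its $\ell^2$-ish total, splits as a length times a direction.

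Here is the decisive computation. Set $W_i = H_i X_i$. I claim $(W_1,\ldots,W_n)$ has a Dirichlet-type structure after renormalization: write $W_i = X_i H_i$ and use that $X_i = G_i/S$, so $W_i = G_i H_i / S$. Now I would invoke the standard fact (provable by a direct density computation, or by noting $G_i H_i$ has a specific law) that normalizing $\sqrt{W_i}$ is what is needed. Actually the slickest argument: show that $(R_i\sqrt{X_i})^2 = H_i X_i$ and that $(\sqrt{\Gamma}Y_i)^2 = \Gamma Y_i^2$ have the same law as random \emph{vectors} by computing the joint density of $(H_1 X_1,\ldots,H_n X_n)$ directly (a single $n$-fold integral against the product of the $\Ga(1,\tfrac12)$ densities and the $\Dir(\tfrac12,\ldots,\tfrac12)$ density) and checking it matches the joint density of $\Gamma(Y_1^2,\ldots,Y_n^2)$; then deduce the square-root identity by the fact that the coordinate-wise positive square root is a bijection on $\R_+^n$. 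I would carry out the density computation by the change of variables $(w_1,\ldots,w_n) \mapsto (v, y_1,\ldots,y_{n-1})$ with $v = \sum w_i$, $y_i = w_i/v$, expecting the Dirichlet$(\tfrac12,\ldots,\tfrac12)$ weights $x_i^{-1/2}$ to cancel against Jacobian/normalization factors and leave a clean $\Ga(\tfrac{n+1}{2},\tfrac12) \times \Dir(1,\ldots,1)$ form.

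The main obstacle I anticipate is bookkeeping in the multivariate density computation: one must carefully handle that the $X_i$ marginal density involves a beta factor, that integrating out the ``extra'' Gamma randomness in the $H_i$ produces exactly the shape shift from $\tfrac12$ to $1$ in each coordinate (this is the beta--gamma algebra $\Ga(\tfrac12,\tfrac12)\times\text{(something)} \Rightarrow \Ga(1,\cdot)$ operating coordinatewise) and from $\tfrac n2$ to $\tfrac{n+1}{2}$ in the total mass, and that the Jacobian of the simplex change of variables contributes the right power of $v$. An alternative, perhaps cleaner, route avoiding a big integral: build the left side from $2n$ independent Gammas — the $G_i \sim \Ga(\tfrac12,\tfrac12)$ for the Dirichlet and separate ingredients for the $R_i$ — and repeatedly apply the one-line beta--gamma identity $\Ga(\alpha,\theta) \equidist \Ga(\alpha+\beta,\theta)\times\mathrm{Beta}(\alpha,\beta)$ to massage $R_i\sqrt{X_i} = \sqrt{H_i G_i/S}$ into the form $\sqrt{\Gamma}\,Y_i$ with $(Y_1,\ldots,Y_n)\sim\Dir(1,\ldots,1)$ and $\Gamma\sim\Ga(\tfrac{n+1}{2},\tfrac12)$ independent; here the point is that $H_i \sim \Ga(1,\tfrac12)$ and $G_i \sim \Ga(\tfrac12,\tfrac12)$ have shape parameters summing to $\tfrac32$, and it is the ratio/product structure that must be checked to decouple the ``direction'' from the ``magnitude.'' I would present whichever of these is shorter; I lean toward the direct joint-density calculation since it most transparently pins down both the Dirichlet parameters and the independent Gamma factor simultaneously.
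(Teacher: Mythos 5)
Your starting representation matches the paper's (take $R_i = \sqrt{H_i}$ with $H_i \sim \Ga(1,\tfrac12)$ i.i.d., and $X_i = G_i/\sum_j G_j$ with $G_i \sim \Ga(\tfrac12,\tfrac12)$ i.i.d.), but neither of your proposed completions actually closes the argument, and the two ingredients that make the paper's proof work are both missing. The decisive tool is not the basic beta--gamma identity $\Ga(\alpha,\theta)\equidist \Ga(\alpha+\beta,\theta)\times\mathrm{Beta}(\alpha,\beta)$, which you cite, but the \emph{gamma duplication formula} (\ref{eqn:gammadup}): for independent $A\sim\Ga(t,\tfrac12)$ and $B\sim\Ga(t+\tfrac12,\tfrac12)$ one has $AB\equidist C^2$ with $C\sim\Ga(2t,1)$. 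With $t=\tfrac12$ this shows $\sqrt{H_iG_i}\sim\mathrm{Exp}(1)$ i.i.d., and that single observation is what collapses the left side to $\frac{1}{\sqrt{S}}(\sqrt{H_1G_1},\ldots,\sqrt{H_nG_n})$ with the numerator an i.i.d.\ exponential vector. Your remark that the shapes $1$ and $\tfrac12$ ``sum to $\tfrac32$'' and that things ``combine via the beta--gamma algebra'' gestures at this but does not pin it down, and without the duplication formula the product $H_iG_i$ has no distinguished form.

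Even granting that step, the proof is not finished: once one writes $\frac{1}{\sqrt{S}}(\sqrt{H_iG_i})_i$, normalizing the exponential vector by its sum $N = \sum_j\sqrt{H_jG_j}$ gives a $\Dir(1,\ldots,1)$ vector $(Y_i)_i$ independent of $N\sim\Ga(n,1)$, but the actual scalar multiplying $(Y_i)_i$ is $N/\sqrt{S}$, and $\sqrt{S}$ is \emph{not} independent of $(Y_i)_i$ (both are functions of the $G_j$). The paper resolves this by noting that $N\cdot(Y_i)_i = \sqrt{S}\cdot(R_i\sqrt{X_i})_i$ holds as an identity of random vectors with the scalar independent of the vector on each side, and then invoking the Gamma cancellation lemma (Lemma 8 of \citet{pitman99brownian}) to strip away the two scalar factors and conclude that $(R_i\sqrt{X_i})_i \equidist \sqrt{\Gamma}\,(Y_i)_i$ with the claimed independence. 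Your sketch does not address this independence issue at all. Finally, your fallback ``direct density'' route is also not set up correctly as written: you propose the simplex change of variables $v=\sum w_i$, $y_i=w_i/v$ on the squared quantities $W_i = H_iX_i$, but on the other side $\sum_i \Gamma Y_i^2 = \Gamma\sum_i Y_i^2 \neq \Gamma$, so that normalization does not produce a clean $\Ga\times\Dir$ factorization; the correct normalization is of the un-squared quantities $\sqrt{W_i}$ by their $\ell^1$ sum (not, as you write, an ``$\ell^2$-ish total''), which returns you to the gamma-duplication route above.
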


\begin{proof}
Firstly, $R_1^2, R_2^2, \ldots, R_n^2$ are independent and identically distributed $\mathrm{Exp}(\frac 12)$ random variables.  Secondly, for any $t > 0$, if $A \sim
\mathrm{Gamma}(t,\frac 12)$ and $B \sim \mathrm{Gamma}(t + \frac 12, \frac 12)$ are independent random variables, then from the gamma duplication formula 
\begin{equation} \label{eqn:gammadup}
AB \equidist C^2,
\end{equation}
where $C \sim \mathrm{Gamma}(2t,1)$ (see, e.g., \cite{wilks1932cga,gordon1994sag}).
So, we can take $R_j = \sqrt{E_j}$, $1 \leq j \leq n$, where
$E_1, E_2, \ldots, E_n$ are independent and identically distributed $\mathrm{Exp}(\frac 12)$ and take
\[
(X_1, X_2, \ldots, X_n) = \frac{1}{\sum_{j=1}^n
  G_j} (G_1, G_2, \ldots, G_n),
\]
 where $G_1, G_2, \ldots, G_n$ are independent and identically distributed
$\mathrm{Gamma}(\frac 12,\frac 12)$ random variables, independent of $E_1, E_2, \ldots,
E_n$.  Note that $\sum_{j=1}^n G_j$ is then also
independent of $(X_1,\ldots,X_n)$ and has $\mathrm{Gamma}(\frac n 2, \frac 12)$
distribution.  It follows that
\[
(R_1 \sqrt{X_1}, R_2 \sqrt{X_2}, \ldots, R_{n}
\sqrt{X_{n}}) = \frac{1}{\sqrt{\sum_{j=1}^{n} G_j}}
(\sqrt{E_1 G_1}, \ldots, \sqrt{E_{n} G_{n}}).
\]
Now by (\ref{eqn:gammadup}), $\sqrt{E_1G_1}, \dots, \sqrt{E_n G_n}$ are independent and distributed as exponential random variables with parameter 1.  So 
\[
(Y_1,\dots, Y_n):=\frac{1}{\sum_{j=1}^{n} \sqrt{E_j G_j}} (\sqrt{E_1 G_1}, \ldots, \sqrt{E_{n} G_{n}}) \sim \mathrm{Dirichlet}(1,1,\ldots, 1),
\]
and $(Y_1,\dots, Y_n)$ is independent of $\sum_{j=1}^{n} \sqrt{E_j G_j}$ which has
$\mathrm{Gamma}(n, 1)$ distribution.  Hence,
\begin{multline*}
\sum_{j=1}^{n} \sqrt{E_j G_j} \times
\frac{1}{\sum_{j=1}^{n} \sqrt{E_j G_j}} (\sqrt{E_1 G_1}, \ldots,
\sqrt{E_{n} G_{n}})\\
= {\textstyle\sqrt{\sum_{j=1}^{n} G_j}} \times \frac{1}{\sqrt{\sum_{j=1}^{n} G_j}}
(\sqrt{E_1 G_1}, \ldots, \sqrt{E_{n} G_{n}}),
\end{multline*}
where the products $\times$ on each side of the equality involve independent random variables.  Applying a Gamma cancellation (Lemma 8 of Pitman \cite{pitman99brownian}), we conclude that
\[
(R_1 \sqrt{X_1}, R_2 \sqrt{X_2}, \dots, R_n \sqrt{X_{n}}) 
\equidist \sqrt{\Gamma} \times (Y_1, Y_2, \ldots, Y_{n}),
\]
where $\Gamma$ is independent of $(Y_1,\dots, Y_n)$ and has a Gamma$(\frac{n+1}{2}, \frac 12)$ distribution.
\end{proof}

\subsection{Distributional properties of the components}

Procedure 1 is essentially a consequence of Theorems~\ref{thm:limit_kernel} and \ref{thm:unicyclic}, below, which capture 
many of the key properties of the metric space $g(2\te, \mathcal P)$ corresponding to the limit of a
connected component of $G(n,p)$ conditioned to have size of order $n^{2/3}$, where $p \sim 1/n$. 
They provide us with a way to sample limit components using only standard objects such as Dirichlet vectors and Brownian CRT's.  

\begin{thm}[\bf Complex components]\label{thm:limit_kernel}
  Conditional on $|\mathcal P|=k\ge 2$, the following statements hold.
\begin{compactenum}[(a)]
	\item The kernel $K(2\te, \mathcal P)$ is almost surely 3-regular (and so has $2(k-1)$ vertices and $3(k-1)$ edges).  For any 3-regular $K$ with $t$ loops,
	\begin{equation}
	\Cprob{K(2\te, \mathcal P)=K}{|\mathcal P|=k} \propto \Bigg(2^t\prod_{e \in E(K)} \mathrm{mult}(e)!\Bigg)^{-1}.
	\end{equation}
	\item For every vertex $v$ of the kernel $K(2\te, \mathcal P)$, we have $\mu(v)=0$ almost surely.
	\item The vector $(\mu(e))$ of masses of the edges $e$ of $K(2\te, \mathcal P)$ has a $\mathrm{Dirichlet}(\frac 12, \dots, \frac 12)$ distribution.
	\item Given the masses $(\mu(e))$ of the edges $e$ of $K(2\te, \mathcal P)$, the metric spaces induced by $g(2\te, \mathcal P)$ on the edge trees are CRT's encoded by independent Brownian excursions of lengths $(\mu(e))$.
	\item For each edge $e$ of the kernel $K(2\te, \mathcal P)$,  the two distinguished points in $\kappa^{-1}(e)$ are independent uniform vertices of the CRT induced by $\kappa^{-1}(e)$.
\end{compactenum} 
\end{thm}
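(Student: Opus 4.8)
The plan is to condition throughout on $\{|\mathcal P|=k\}$ and, using the scaling property established above, to take $\sigma=1$. Under this conditioning the change-of-measure density $\exp(\int_0^1\be(x)\,dx)$ of (\ref{eqn:changemeas}) cancels the Poisson weight $\exp(-\int_0^1\be(x)\,dx)$, so that $\te$ is distributed as a Brownian excursion size-biased by the $k$-th power of its area, with $\mathcal P$ then $k$ conditionally i.i.d.\ uniform points of the region under $2\te$; each such point $(x,y)$ identifies the (a.s.) leaf $[x]$ with its ancestor at height $y$. I would split the five claims into the topological facts (a) and (b), handled by genericity of the Brownian excursion tree, and the distributional facts (c), (d) and (e), which I would read off from the joint law of the kernel shape together with the core-path lengths $(|\pi(e)|)$ --- the content of Section~\ref{sec:length_core} and Lemma~\ref{lem:treeshapelengths} --- combined with an excursion-theoretic description of $2\te$ above its core.

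For (b) and the $3$-regularity part of (a): almost surely $\mathcal T_{2\te}$ has only branch-points of degree exactly $3$ (and this survives the size-biasing), and a.s.\ the $k$ sampled points $[x_i]$ are distinct leaves of $\mathcal T_{2\te}$ while their ancestors $[r(x_i)]$ are distinct degree-two points lying in the interiors of the edges of $T_C(2\te,\mathcal P)$. Hence every vertex of the core is either a binary branch-point of $\mathcal T_{2\te}$, all three of whose directions are then taken up by the core, or a glued pair $[x_i]=[r(x_i)]$, at which both directions of $\mathcal T_{2\te}$ --- towards the root and along the newly created loop --- are taken up by the core. In either case nothing of positive mass hangs off, which is (b), and the core is $3$-regular; since the surplus equals $k$, the kernel has $k-1$ more edges than vertices, hence $2(k-1)$ vertices and $3(k-1)$ edges. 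For the distributional formula in (a) I would read the law of the kernel shape from Lemma~\ref{lem:treeshapelengths}: the forgetful map from the marked tree $T_C(2\te,\mathcal P)$ to its trivalent multigraph is $2^t\prod_{e}\mathrm{mult}(e)!$-to-one --- two orientations per loop, $\mathrm{mult}(e)!$ permutations of each parallel class --- and the resulting pushforward is (\ref{eqn:kerneldist}); alternatively (\ref{eqn:kerneldist}) is the $n\to\infty$ image, via Theorem~\ref{thm:clcrg}, of Janson's formula for the kernel of $G(n,p)$ conditioned on its size and surplus (see \cite[][Theorem~7]{janson93birth}).

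For (c), (d) and (e) I would further condition on the kernel and on the lengths $(|\pi(e)|)$ supplied by Lemma~\ref{lem:treeshapelengths}. Relative to its core, the excursion $2\te$ breaks into sub-excursions sitting above the core paths; an excursion decomposition --- morally the two-point analogue of the Bertoin--Pitman ``forest-floor'' correspondence recalled in the aside above (see \cite{bertoinpitman94path}) --- shows that, given the core skeleton, these decorations are independent across edges and, along each core path $\pi(e)$, distributed exactly as the decorations hung along the spine between two independent uniform points of a Brownian CRT. This yields (d) (each edge tree is a Brownian CRT), (e) (the spine $\pi(e)$ is then the path between two independent uniform points, and its endpoints are the two distinguished points of $T(e)$), and the conditional independence of the edge trees; it also forces $|\pi(e)|=R_e\sqrt{\mu(e)}$ (conditionally on $(\mu(e))$) with the $R_e$ i.i.d.\ Rayleigh, the compatibility with the $\sqrt{\Gamma}\,(Y_e)$-law of the lengths ($(Y_e)\sim\Dir(1,\dots,1)$, $\Gamma\sim\Ga(\frac{3(k-1)+1}{2},\frac12)$) being precisely Proposition~\ref{prop:raydir}. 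For the masses (c), the joint law of core-path lengths and masses $(\mu(e))$ coming out of the same computation exhibits $(\mu(e))$ as $\Dir(\frac12,\dots,\frac12)$ over the $3(k-1)$ edges; equivalently, as developed in Section~\ref{sec:urns_distrib}, the reconstruction of the component runs a P\'olya urn on the mass-proportions of the edge trees whose almost sure limit is $\Dir(\frac12,\dots,\frac12)$ --- the exponent $\frac12$ being the signature of the Brownian (stable index $2$) CRT.

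I expect the main obstacle to be the excursion analysis underlying Lemma~\ref{lem:treeshapelengths} together with the decomposition step for (c)--(e): one must describe a Brownian excursion conditioned to lie above $k$ uniformly sampled points (the law of $2\te$ once the tilt has cancelled the Poisson weight) in terms of its core skeleton and its decorating sub-excursions, prove that the skeleton lengths have the stated Rayleigh-type joint law, and establish that the decorations are then genuinely independent Brownian CRTs with exactly the $\Dir(\frac12,\dots,\frac12)$ mass split rather than merely something close to it. Everything else --- the genericity arguments for (a) and (b), and the beta--gamma bookkeeping linking lengths to masses --- is comparatively routine.
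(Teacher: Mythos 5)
Your route to parts (c)--(e) rests on an excursion decomposition of $2\te$ above its core skeleton --- a multi-spine analogue of the Bertoin--Pitman ``forest-floor'' correspondence, which you would use to conclude that, given the kernel shape and the lengths $(|\pi(e)|)$, the decorations above the core paths are independent and distributed as those hung along a two-point spine of a Brownian CRT with $\Dir(\frac12,\dots,\frac12)$ mass split. This is a genuine gap. The paper explicitly flags this very connection as ``so far non-rigorous'' in the forest-floor aside, and the entire machinery of Sections~\ref{subsec:recursive} and~\ref{sec:urns_distrib} exists precisely to circumvent it. You identify the decomposition as ``the main obstacle'' but supply no argument for it, and it is the whole content of (c)--(e). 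The paper's actual proof goes the other way around: it does not decompose the excursion at all, but instead runs the stick-breaking construction (Theorem~\ref{thm:recursiveconstruction}), recasts it as the continuous urn of Section~\ref{sec:urns_distrib}, extracts the $\Dir(\frac12,\dots,\frac12)$ law of mass proportions as an almost-sure P\'olya-urn limit (Lemma~\ref{lem:convurn}), and then proves in Lemma~\ref{lem:convproc} --- via the beta--gamma Lemmas~\ref{lem:dirsb} and~\ref{aldouslem}, not via any Brownian path decomposition --- that conditional on the limiting proportions the stick-breaking decouples exactly into independent rescaled copies of Aldous' CRT construction, which combined with Theorem~\ref{thm:aldousrecursive} gives (d) and (e). Your passing remark that the urn picture is ``equivalent'' to your decomposition elides the issue: the urn analysis is the proof, whereas your decomposition is an unestablished claim that, if proved, would yield the urn picture as a corollary.

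Two smaller points. For (a), your ``forgetful map'' derivation of (\ref{eqn:kerneldist}) from Lemma~\ref{lem:treeshapelengths} is not as routine as you present it: the tree-shape law induced by the density in that lemma is \emph{not} uniform over binary shapes (the paper notes this explicitly in the remark following the lemma, already at $k=4$), so the multiplicity count $2^t\prod_e\mathrm{mult}(e)!$ does not by itself give the kernel law --- one would have to integrate out the lengths and show that the non-uniformity conspires to produce exactly (\ref{eqn:kerneldist}). Your alternative route via the finite-$n$ limit is the one the paper actually takes, invoking Theorem 7 of \citet{janson93birth} for the law and Theorem 4 of \citet{LuPiWi1994} to ensure distinctness of degree-three vertices in the limit; your continuum genericity argument for $3$-regularity and for (b) is a plausible alternative to this, but it is not what the paper does (the paper obtains (b) as a corollary of Theorem~\ref{thm:recursiveconstruction}), and it would need the additional check that the $r(x_i)$ are a.s.\ not branch points and are pairwise distinct, which you assert but do not prove.
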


As mentioned earlier, (a) is an easy consequence of \cite[][Theorem 7 and (1.1)]{janson93birth} and \cite[][Theorem 4]{LuPiWi1994} (see also \cite[][Theorems 5.15 and 5.21]{janson00random}).
Also, it should not surprise the reader that the vertex trees are almost surely empty: in the finite-$n$ case, attaching them to the kernel requires only one uniform choice of a vertex (which in the limit becomes a leaf) whereas the edge trees require two such choices.  
The choice of two distinguished vertices has the effect of 
``doubly size-biasing'' the edge trees, making them substantially larger than the singly size-biased vertex trees. 

It turns out that similar results to those in (c)-(e) hold at {\em every} step of the stick-breaking construction and that, roughly speaking, we can view the stick-breaking construction as decoupling into independent stick-breaking 
constructions of rescaled Brownian CRT's along each edge, conditional on the final masses. 
This fact is intimately linked to the ``continuous P\'olya's urn'' perspective mentioned earlier, and also seems related to an extension of the gamma duplication formula due 
to \citet{pitman99brownian}. However, to make all of this precise requires a fair amount of terminology, so we postpone further discussion until later in the paper. 

We note the following corollary about the lengths of the paths in the core of the limit metric space.

\begin{cor} \label{cor:corestuff}
Let $K$ be a 3-regular graph with edge-set $E(K) = \{e_1,e_2,\ldots,e_m\}$ (with arbitrary labeling). 
Then, conditional on $K(2\te, \mathcal P)=K$, the following statements hold.
\begin{compactenum}[(a)]	
	\item Let $(X_1,\dots, X_m)$ be a $\mathrm{Dirichlet}(\frac 12,\dots, \frac 12)$ random vector. Let $R_1, R_2, \ldots, R_m$ be independent and identically distributed Rayleigh random variables. 
Then,
	\[
	(|\pi(e_1)|, |\pi(e_2)|, \ldots, |\pi(e_m)|)\eqdist (R_1\sqrt{X_1}, R_2 \sqrt{X_2}, \ldots, R_m \sqrt{X_m}).
	\]
	\item  Let $\Gamma$ be a $\mathrm{Gamma}\left(\frac{m+1}{2},\frac 12\right)$ random variable. Then, 
	\[
	\sum_{j=1}^m |\pi(e_j)| \eqdist \sqrt{\Gamma} \qquad \text{and}\qquad
	\frac{1}{\sum_{j=1}^m |\pi(e_j)|} (|\pi(e_1)|, |\pi(e_2)|, \ldots, |\pi(e_m)|) \sim \mathrm{Dirichlet}(1,\dots, 1)
	\]
	independently.
\end{compactenum}
\end{cor}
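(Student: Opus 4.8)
The plan is to read Corollary~\ref{cor:corestuff} off Theorem~\ref{thm:limit_kernel} together with Brownian scaling and Proposition~\ref{prop:raydir}. Throughout I would work conditionally on $K(2\te, \mathcal P) = K$, where $K$ is the given labelled $3$-regular graph with $m = |E(K)|$ edges. The point is that each path length $|\pi(e_j)|$ is, essentially by definition, the distance within the edge tree $T(e_j)$ between its two distinguished points; Theorem~\ref{thm:limit_kernel}(c)--(e) then says precisely how these edge trees and their marked points are distributed, and the classical Rayleigh law for inter-point distances in the Brownian CRT converts this into the claimed distribution.

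First I would pin down the identification $|\pi(e_j)| = d_{T(e_j)}(u_j, v_j)$, where $u_j, v_j$ are the two distinguished points of $T(e_j)$. The path $\pi(e_j)$ lies in the core $C(2\te, \mathcal P)$, all of its interior points have degree two, and every such interior point lies in $\kappa^{-1}(e_j)$, so $\pi(e_j)$ is a non-self-intersecting path from $u_j$ to $v_j$ inside the \emph{tree} $T(e_j)$; hence it is the geodesic there, and $|\pi(e_j)|$ equals the $T(e_j)$-distance between $u_j$ and $v_j$. Now set $(X_1, \dots, X_m) = (\mu(e_1), \dots, \mu(e_m))$. By Theorem~\ref{thm:limit_kernel}(c) this vector is $\mathrm{Dirichlet}(\frac12, \dots, \frac12)$-distributed; by (d), conditionally on $(X_1,\dots,X_m)$ the edge trees are independent Brownian CRT's of masses $X_1, \dots, X_m$; and by (e) the pair $(u_j, v_j)$ consists of two independent uniform points of $T(e_j)$. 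Since the distance between two independent uniform points of the standard (mass-one) Brownian CRT is Rayleigh (recalled in Section~\ref{gamdir}), and a mass-$X_j$ Brownian CRT is obtained from the mass-one one by scaling all distances by $\sqrt{X_j}$ (Brownian scaling, $\be^{(X_j)}(\,\cdot\,) = \sqrt{X_j}\,\be(\,\cdot\,/X_j)$), we get $|\pi(e_j)| = \sqrt{X_j}\, R_j$ with $R_j$ Rayleigh; moreover, conditionally on $(X_1,\dots,X_m)$ the $R_j$ are i.i.d.\ and their conditional law does not depend on the masses, so $(R_1, \dots, R_m)$ is a vector of i.i.d.\ Rayleigh variables independent of $(X_1, \dots, X_m)$. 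This is exactly part~(a).

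Part~(b) then follows at once: applying Proposition~\ref{prop:raydir} with $n = m$ gives
\[
(|\pi(e_1)|, \dots, |\pi(e_m)|) \eqdist (R_1 \sqrt{X_1}, \dots, R_m \sqrt{X_m}) \eqdist \sqrt{\Gamma} \times (Y_1, \dots, Y_m),
\]
where $\Gamma \sim \mathrm{Gamma}(\frac{m+1}{2}, \frac12)$ is independent of $(Y_1, \dots, Y_m) \sim \mathrm{Dirichlet}(1, \dots, 1)$. Reading off the coordinate sum (which equals $\sqrt{\Gamma}$, as $\sum_j Y_j = 1$) and the normalised vector (which equals $(Y_1,\dots,Y_m)$), and using the independence of $\Gamma$ and $(Y_1,\dots,Y_m)$, yields both displayed identities of part~(b).

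The only genuine work is in the first two steps, and the anticipated obstacles are essentially bookkeeping: one must keep straight the factor of $2$ in $2\te$ and in the coding of the edge trees, the scaling convention $\be^{(\sigma)}(\,\cdot\,) = \sqrt{\sigma}\,\be(\,\cdot\,/\sigma)$, and --- most delicately --- that passing from the edge tree $T(e_j)$ to the core does not shorten the path between the two distinguished points, so that $|\pi(e_j)|$ really is the CRT-distance between two uniform points rather than something smaller. One should also note that parts (c)--(e) of Theorem~\ref{thm:limit_kernel}, although stated conditionally on $|\mathcal P| = k$, continue to hold after the further conditioning on $K(2\te,\mathcal P) = K$; this is built into the structure of that theorem but deserves an explicit remark. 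Once these points are settled, the corollary follows from Proposition~\ref{prop:raydir} with no further computation.
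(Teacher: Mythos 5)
Your proposal is correct and takes the same route as the paper's proof: part (a) is read off Theorem~\ref{thm:limit_kernel}(c)--(e), the Rayleigh law for distances between two uniform points of the Brownian CRT, and Brownian scaling, while part (b) follows immediately from Proposition~\ref{prop:raydir}. You spell out the bookkeeping---the identification $|\pi(e_j)|=d_{T(e_j)}(u_j,v_j)$ and the compatibility of further conditioning on $K(2\te,\mathcal P)=K$ with the hypotheses of Theorem~\ref{thm:limit_kernel}---that the paper's two-sentence proof leaves implicit, but the argument is the same.
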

\begin{proof}The distance between two independent uniform leaves of a Brownian CRT is Rayleigh distributed \cite{aldous91crt2}. So the first statement follows from Theorem~\ref{thm:limit_kernel} (iii)--(v) and a Brownian scaling argument.  The second statement follows from Proposition~\ref{prop:raydir}.
\end{proof}

The cases of tree and unicylic components, for which the kernel is
empty, are not handled by Theorem~\ref{thm:limit_kernel}.  The limit
of a tree component is simply the Brownian CRT.  The corresponding
result for a unicyclic component is as follows.

\begin{thm}[\bf Unicyclic components] \label{thm:unicyclic}
  Conditional on $|\mathcal P|=1$, the following statements hold.
\begin{compactenum}[(a)]
\item The length of the unique cycle is
  distributed as the modulus of a $\mathrm{Normal}(0,1)$ random
  variable (by (\ref{eqn:sqrtgamma}) this is also the distribution of
  the square root of a $\Ga(\frac 12,\frac 12)$ random variable).
\item A unicyclic limit component can be generated by sampling $(P_1,P_2) \sim
  \Dir(\frac 12,\frac 12)$, taking two independent Brownian CRT's, rescaling the
  first by $\sqrt{P_1}$ and the second by $\sqrt{P_2}$, 
  identifying the root of the first with a uniformly-chosen vertex in the
  same tree and with the root of the other, to make a lollipop shape. 
\end{compactenum}
\end{thm}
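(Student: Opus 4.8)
The plan is to reduce both parts to the analysis of the metric space $g(2\te, \mathcal P)$ conditioned on $|\mathcal P| = 1$, using the vertex-identification description from Section \ref{vitt} together with the scaling property established there (so we may take $\sigma = 1$). When $|\mathcal P| = 1$, there is a single Poisson point $\xi = (x,y)$ under $2\te$, and the vertex identification glues the equivalence classes $[x]$ and $[r(x)]$ in the tilted tree $\mathcal T_{2\te}$. The unique cycle in $g(2\te,\mathcal P)$ is then the image of the path $\llbracket x, r(x) \rrbracket$ in $\mathcal T_{2\te}$ together with the identification of its endpoints; its length is $d_{2\te}(x, r(x))$. The pre-core $T_C(2\te, \mathcal P)$ is exactly this path (since there are only two marked points $x$ and $r(x)$), the core is the resulting circle, the kernel is empty, and the rest of the space consists of two Brownian CRT's: one hanging off the identified point (root of $\mathcal T_{2\te}$ restricted to the complement of the path, rooted at $c(0)$ — but since $\xi$ lies under the excursion, the root-end attaches at one point of the cycle) and one rooted at a generic point of the cycle.

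Concretely, I would carry out the following steps. \textbf{Step 1: Identify the reduced-tree structure.} Using the reduced-tree picture of Figure \ref{fig:reduced_tree} with $k=1$, the subtree of $\mathcal T_{2\te}$ spanned by the root and the two points $x, r(x)$ is a combinatorial tree with edge-lengths having one branch-point $b$; write $L_0$ for the length from the root to $b$, and $L_1, L_2$ for the two lengths from $b$ out to (the classes of) $x$ and $r(x)$. The length of the cycle is $L_1 + L_2$, and the two hanging CRT's have masses determined by how the remaining Lebesgue mass of $[0,1]$ splits. \textbf{Step 2: Apply the change of measure.} By \eqref{eqn:changemeas} and the Poisson-point computation on page \pageref{scalingproperty}, conditioning on $|\mathcal P| = 1$ reweights the Brownian excursion $\be$ by $\int_0^1 \be(u)\,du$ (a single factor, not the full exponential), i.e. we work with a \emph{length-biased} Brownian excursion. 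I would then invoke the known decomposition of the CRT along the path between two i.i.d.\ uniform points (or root-to-uniform-leaf; Aldous \cite{aldous91crt2}) — but crucially applied to this length-biased excursion. Here size-biasing by the area is exactly size-biasing by the total mass, which under the standard Poisson/uniform-point machinery corresponds to adding one extra uniform point; this should convert the length-biased picture back into a clean statement about an \emph{ordinary} Brownian CRT with a marked point, explaining why $(P_1,P_2)\sim\Dir(\frac12,\frac12)$ with two plain CRT's appears. \textbf{Step 3: Extract the cycle length.} The distance between two i.i.d.\ uniform points of the Brownian CRT is Rayleigh; but here, after the size-bias, the relevant quantity is the distance associated with a single such "radius" in a mass-$1$ tree, which via \eqref{eqn:sqrtgamma} with $k=0$ is the modulus of a $\mathrm{Normal}(0,1)$, i.e.\ $\sqrt{\Ga(\frac12,\frac12)}$ — giving part (a). For part (b), the two hanging CRT's have mass-split $(P_1,P_2)$; the $\Dir(\frac12,\frac12)$ law for this split should drop out of the beta–gamma algebra in exactly the way Proposition \ref{prop:raydir} anticipates (indeed part (b) is essentially the $k=1$, $m=1$ instance feeding into Corollary \ref{cor:corestuff} and the lollipop description in Procedure 1).

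\textbf{Step 4: Assemble the lollipop.} Finally I would check that the attachment points are as claimed: the second CRT is rooted at the identified point of the cycle (this is the root-end of $\mathcal T_{2\te}$), and the cycle is obtained from the first CRT by identifying its root with a uniformly chosen vertex — the uniformity coming from the fact that, conditionally on the tree, a uniform point under $2\te$ gives a uniform (mass-)sample, which for the CRT is a uniform leaf. Rescaling the two CRT's by $\sqrt{P_1}$ and $\sqrt{P_2}$ is just Brownian scaling applied to put the masses right.

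The main obstacle, I expect, is Step 2: making rigorous the claim that size-biasing the Brownian excursion by its area is "the same as" adding an extra uniform point, and tracking precisely how this interacts with the doubly-marked (root, $x$, $r(x)$) structure so that the output is genuinely two \emph{independent ordinary} Brownian CRT's with a $\Dir(\frac12,\frac12)$ mass-split and not some tilted variant. This is exactly the kind of "the change of measure is accounted for by biasing a fixed number of paths" phenomenon flagged in the introduction, and getting the bookkeeping right — in particular confirming the independence of the cycle length from the mass-split, and the independence of the two CRT shapes from both — is where the real work lies. I would lean on Proposition \ref{prop:raydir} and the beta–gamma algebra to handle the distributional identities cleanly once the structural decomposition is in place.
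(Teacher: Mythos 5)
Your Step 1 contains a concrete geometric error that propagates through the rest of the argument. For a single point $\xi=(x,y)\in\mathcal P$, the identified points $[x]$ and $[r(x)]$ are \emph{not} two leaves hanging off a common branch-point: by definition $2\te(r(x))=y$ and $\inf_{[x,r(x)]}2\te=y$, so $d_{2\te}(x,r(x))=2\te(x)-y$ while $d_{2\te}(0,x)=2\te(x)$ and $d_{2\te}(0,r(x))=y$. Hence $[r(x)]$ lies \emph{on} the geodesic from the root to $[x]$ (the paper makes this point explicitly when observing that $T_C$ is a subtree of the reduced tree $T_R$). So the subtree spanned by the root, $[x]$ and $[r(x)]$ is a single path, not a Y-shape with a branch-point $b$; there is no $(L_0,L_1,L_2)$, and the cycle length is a single segment $d(r(x),x)$, not $L_1+L_2$. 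Getting this right is what the ``pre-core'' notion is for: for $k=1$ the pre-core is the root-to-$[x]$ path split at the path-point $[r(x)]$, giving two exchangeable lengths $(M_1,M_2)$; the cycle is $M_1$ and the ``stick'' attaches along $M_2$.

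Your Steps 2--4 are in the right spirit (tilting, area-biasing, Brownian scaling, assembling the lollipop), but you explicitly defer the real work — and that deferred work is where the paper's proof actually lives. The paper does not proceed by the ``area-bias $=$ add an extra uniform point'' heuristic, but by direct density computations: Lemma~\ref{lem:treeshapelengths} pushes the change of measure onto the reduced-tree edge lengths, Lemma~\ref{lem:pre-corelengths} shows the $3k-1$ pre-core segment lengths have exchangeable joint density proportional to $(\sum m_i)\exp(-\frac12(\sum m_i)^2)$, and Lemma~\ref{lem:lengths} (case $k=1$) gives $(M_1,M_2)\equidist \sqrt{\Gamma}\,(U,1-U)$ with $\Gamma\sim\Ga(\frac32,\frac12)$, $U\sim\mathrm{U}[0,1]$. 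Part~(a) then follows since $\sqrt{\Gamma}$ is a size-biased Rayleigh $R^*$ and $UR^*$ is the modulus of a standard normal (this is the role of \eqref{eqn:sqrtgamma}, but you invoke it without the chain that makes it applicable). Part~(b) is not obtained by a size-bias/beta-gamma argument on a single CRT; it is obtained by running the stick-breaking process (Lemma~\ref{lem:decorating}) from the two pre-core segments and analyzing it as a continuous P\'olya urn (Lemmas~\ref{lem:convurn} and~\ref{lem:convproc}), which shows the two edge trees are independent CRTs with mass-split $(P_1,P_2)\sim\Dir(\frac12,\frac12)$ and that the rescalings converge to $\sqrt{P_i}$. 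So the gaps you flag as ``where the real work lies'' are genuine gaps, and the Step~1 geometry must be corrected before that work can even begin.
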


Finally, we note here an intriguing result which is a corollary of Theorem~\ref{thm:limit_kernel} and Theorem 2 of \citet{aldous94recursive}.

\begin{cor}
Take a (rooted) Brownian CRT and sample two uniform leaves.  This gives three subtrees, each of which is marked by a leaf (or the root) and the branch-point.  These doubly-marked subtrees have the same joint distribution as the three doubly-marked subtrees which correspond to the three core edges of $g(2 \te,\mathcal P)$ conditioned to have surplus $|\mathcal P| = 2$.
\end{cor}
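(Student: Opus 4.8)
\medskip\noindent
The plan is to check that each of the two triples of doubly-marked metric spaces admits the same canonical description: a $\Dir(\frac12,\frac12,\frac12)$ vector of masses and, conditionally on it, three independent Brownian CRTs of those masses, each carrying an independent pair of uniform marked points. Granting this, the two triples are equal in distribution and the corollary follows.

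For the $g(2\te,\mathcal P)$ side, condition on $|\mathcal P|=2$. By Theorem~\ref{thm:limit_kernel}(a) the kernel $K(2\te,\mathcal P)$ is almost surely $3$-regular with two vertices and three edges, and there are only two such multigraphs: the theta graph $\Theta$ (three parallel $uv$-edges) and the dumbbell (a $uv$-edge with a loop at each of $u$ and $v$). As a loop produces an edge tree with a single distinguished point, the phrase ``three doubly-marked subtrees'' restricts attention to the event $\{K(2\te,\mathcal P)=\Theta\}$, which has positive probability by~(\ref{eqn:kerneldist}); I read the statement as being conditional on this event as well. On it, label the edges $e_1,e_2,e_3$. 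Parts (c)--(e) of Theorem~\ref{thm:limit_kernel} now give exactly the canonical description: $(\mu(e_1),\mu(e_2),\mu(e_3))\sim\Dir(\frac12,\frac12,\frac12)$; conditionally on this vector the edge trees are independent Brownian CRTs of those masses; and the two distinguished points $u,v$ of each edge tree are two independent uniform points of it. That $u$ and $v$ are shared across the three trees is irrelevant, since the triple of \emph{abstract} doubly-marked metric spaces records no relation between points lying in different members of the triple.

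For the Brownian CRT side, I would invoke the recursive self-similarity of the CRT, i.e.\ Theorem~2 of~\cite{aldous94recursive}. Sample two independent uniform leaves $L_1,L_2$ of the rooted Brownian CRT $\mathcal T$; almost surely $\rho,L_1,L_2$ have a unique branch point $b$, and deleting $b$ splits $\mathcal T$ into subtrees $\mathcal S_0\ni\rho$, $\mathcal S_1\ni L_1$, $\mathcal S_2\ni L_2$, each having $b$ as a leaf, so $\mathcal S_0$ is marked by $(\rho,b)$ and $\mathcal S_i$ by $(L_i,b)$, exactly as in the statement. Aldous' theorem identifies these subtrees, rooted at $b$, as conditionally (on their masses) independent rescaled Brownian CRTs, with the mass vector $\Dir(\frac12,\frac12,\frac12)$-distributed and $L_i$ an independent uniform point of $\mathcal S_i$. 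Since each $\mathcal S_j$ is then a Brownian CRT rooted at $b$, the re-rooting invariance of the Brownian CRT (re-rooting at a mass-measure-sampled point preserves the law) lets us treat $b$ as a uniform point of each $\mathcal S_j$ and lets us replace the global root $\rho$ in $\mathcal S_0$ by an independent uniform point. Hence in each of the three subtrees the marked pair is again a pair of independent uniform points, and the canonical description holds here too.

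The step I expect to be the crux is extracting the joint law of the \emph{marks}, rather than just the masses and shapes, from the recursive self-similarity theorem. Aldous' statement is most naturally phrased with each piece rooted at the branch point $b$, so one must carefully combine it with the re-rooting invariance of the CRT to see that in each piece the marked pair (leaf-or-root, together with $b$) genuinely is an independent pair of uniform points, and that this holds jointly with the conditional independence of the three pieces given their masses. The remaining points --- ruling out the dumbbell kernel, and verifying that the exchangeability of the marks within and across the trees makes the matching of the two triples canonical --- are routine once this is in place.
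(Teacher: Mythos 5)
Your proof is correct and follows the route the paper indicates. The paper states this corollary without a written proof, citing only Theorem~\ref{thm:limit_kernel} and Theorem~2 of \cite{aldous94recursive}, and these are exactly the two ingredients you use: on the $g(2\te,\mathcal P)$ side, parts (c)--(e) of Theorem~\ref{thm:limit_kernel} give the $\Dir(\frac12,\frac12,\frac12)$ mass vector, the conditional independence of the edge trees as rescaled Brownian CRTs, and the two independent uniform marks per edge tree; on the CRT side, Aldous' recursive self-similarity gives the same mass law and conditional independence, and the re-rooting invariance you invoke is precisely what is needed to convert each pair (branch-point, leaf-or-root) into a pair of two independent uniform points of the corresponding piece. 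Reducing both triples to this canonical description is the right way to organize the comparison, and your remark that the comparison is of \emph{abstract} doubly-marked spaces (so the sharing of the branch-point on one side, and of the two kernel vertices on the other, is forgotten) is an important point that the corollary's wording leaves implicit.

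Your observation that the statement must implicitly restrict to the event $\{K(2\te,\mathcal P)=\Theta\}$ is a genuine clarification. Conditional on $|\mathcal P|=2$, Theorem~\ref{thm:limit_kernel}(a) gives that the kernel is one of the two 3-regular multigraphs on two vertices: the theta graph (weight $\propto (3!)^{-1}=1/6$) or the dumbbell (weight $\propto (2^2)^{-1}=1/4$), so the dumbbell in fact occurs with probability $3/5>0$. On that event two of the three kernel edges are loops, and a loop $uu$ has both distinguished points equal to $u$; the corresponding doubly-marked spaces are degenerate and cannot match the CRT triple, whose two marks per piece are a.s.\ distinct. (Theorem~\ref{thm:limit_kernel}(e) is likewise silent about loops.) Reading the corollary as conditional on the theta kernel, as you do, is what makes it precise and true.
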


In the remainder of this paper, we prove Theorems~\ref{thm:recursiveconstruction}, \ref{thm:limit_kernel} and
\ref{thm:unicyclic} using the limiting picture of \cite{Us1}, described in Section \ref{vitt}. 
Our approach is to start from the core and
then construct the trees which hook into each of the core edges. The lengths in the core are studied in Section~\ref{sec:length_core}. The stick-breaking construction of a limiting component is discussed in Section~\ref{subsec:recursive}. Finally, we use the urn model in order to analyze the stick-breaking construction and to prove the distributional results in Section~\ref{sec:urns_distrib}.

\section{Lengths in the core}\label{sec:length_core}
Suppose we have surplus $|\mathcal P| = k \ge 1$.

If $k \ge 2$ then there are $m = 3(k-1)$ edges in the kernel. Each of these edges corresponds to a path in the core. Let the lengths of these paths be $L_1(0), L_2(0), \ldots, L_m(0)$ (in arbitrary order;
their distribution will turn out to be exchangeable).  Let $C(0) =
\sum_{i=1}^m L_i(0)$ be the total length in the core and let $(P_1(0),
P_2(0), \ldots, P_m(0))$ be the vector of the proportions of this
length in each of the core edges, so that $(L_1(0), L_2(0), \ldots,
L_m(0)) = C(0) \cdot (P_1(0), P_2(0), \ldots, P_m(0))$.  Then we can
rephrase Corollary~\ref{cor:corestuff} as the following collection of
distributional identities:
\begin{align}
C(0)^2  & \sim \mathrm{Gamma}({\textstyle \frac{m+1}2,\frac 12}) \label{eqn:1}\\
(P_1(0), P_2(0), \ldots, P_m(0)) & \sim \Dir(1,1,\ldots,1)  \label{eqn:2} \\
(L_1(0), L_2(0), \ldots, L_m(0)) & \equidist (R_1 \sqrt{P_1}, R_2 \sqrt{P_2}, \ldots, R_m \sqrt{P_m}), \label{eqn:3}
\end{align}
where $C(0)$ is independent of $(P_1(0), P_2(0), \ldots, P_m(0))$ and where $R_1, R_2, \ldots, R_m$ are i.i.d.\ with Rayleigh distribution, independently of $(P_1,P_2, \ldots, P_m) \sim \Dir(\frac 12,\frac 12,\ldots,\frac 12)$.  Of course, (\ref{eqn:3}) follows from (\ref{eqn:1}) and (\ref{eqn:2}) using Proposition~\ref{prop:raydir}.  Although we stated these identities as a corollary of Theorem~\ref{thm:limit_kernel}, proving them will, in fact, be a good starting point for our proofs of Theorem~\ref{thm:limit_kernel}.  

In the case $k = 1$, the core consists of a single cycle.  Write $C(0)$ for its length.  Then we can rephrase part (a) of Theorem~\ref{thm:unicyclic} as
\begin{equation} \label{eqn:4}
C(0)^2 \sim \textstyle \Ga(\frac 12,\frac 12),
\end{equation}
so that, in particular, $C(0)$ is distributed as the modulus of a standard normal random variable.

For the remainder of the section, we will use the notation $T_R(h, \mathcal Q)$, where $h$ is an excursion and $\mathcal Q \subset A_h$ is a finite set of points lying under $h$, 
to mean the so-called \emph{reduced tree} \label{redtree} of $\mathcal{T}_h$, that is the subtree spanned by the root and by the leaves corresponding to the points $\{x:\xi = (x,y) \in \mathcal Q\}$ of the excursion (as defined on page \pageref{identification}).  See Figure~\ref{fig:reduced_tree}.

We first spend some time deriving the effect of the change of measure (\ref{eqn:changemeas}) conditioned on $|\mathcal P|=k$, 
in a manner similar to the explanation of the scaling property on page \pageref{scalingproperty}. We remark that conditional on $\te$ and on $|\mathcal P|=k$, we may view the points of $\mathcal P$ as selected 
by the following two-stage procedure (see Proposition 19 of \cite{Us1})\label{twostage}.  First, choose $\mathbf{V}=(V_1,\ldots,V_k)$, where $V_1,\ldots,V_k$ are independent and identically distributed random variables on $[0,1]$ with density proportional to $\te(u)$. 
Then, given $\mathbf{V}$, choose $\mathbf{W}=(W_1,\ldots,W_k)$ where $W_i$ is uniform on $[0,2\te(V_i)]$, and take $\mathcal P$ to be the set of points $(\mathbf{V},\mathbf{W}) = \{(V_1,W_1),\ldots,(V_k,W_k)\}$. 
Now suppose that $f$ is a non-negative measurable function. Then by the tower law for conditional expectations, we have 
\begin{align*}
\E{f(\te, \mathcal P)~|~| \mathcal P |=k} 
& = \frac{\E{f(\te, \mathcal P) \I{| \mathcal P | = k}}}{\Prob{| \mathcal P |=k}} \\
& = \frac{\E{\E{f(\te,(\mathbf{V},\mathbf{W})) \I{| \mathcal P | = k}~|~\te~}}}{\E{\Prob{| \mathcal P |=k~|~\te}}} \\
& =  \frac{\E{\E{f(\te,(\mathbf{V},\mathbf{W}))~|~\te~} \cdot
\frac 1 {k!} \big(\int_0^1 \te(u) du\big)^k \exp\big(- \int_0^1 \te(u) du \big) }}
{\E{ \frac1 {k!}\big( \int_0^1 \te(u) du\big)^k \exp\big(-\int_0^1 \te(u) du\big)  }}.
\end{align*}
Expressing $\E{f(\te,(\mathbf{V},\mathbf{W}))~|~\te~}$ as an integral over the possible values of $\mathbf{V}$, the normalization factor exactly 
cancels the term $( \int_0^1 \te(u) du)^k$ in the numerator, and so the change of measure (\ref{eqn:changemeas}) yields 
\begin{align}
\E{f(\te, \mathcal P)~|~|\mathcal P |=k}
& = \frac{\E{\int_0^1 \cdots \int_0^1 f(\te,(\mathbf{u},\mathbf{W})) \te(u_1) \ldots \te(u_k) du_1\ldots du_k 
\cdot \exp \big(- \int_0^1 \te(u) du \big) }}
{\E{\big( \int_0^1 \te(u) du \big)^k \cdot \exp \big(- \int_0^1 \te(u) du \big) }} \notag \\
& = \frac{\E{\int_0^1 \int_0^1 \cdots \int_0^1 f(\be,(\mathbf{u},\mathbf{W})) \be(u_1) \be(u_2) \ldots \be(u_k) du_1 du_2 \ldots du_k}}{\E{\big(\int_0^1 \be(u) du\big)^k}} \notag \\
& = \frac{\E{f(\be,(\mathbf{U},\mathbf{W})) \be(U_1) \be(U_2) \ldots \be(U_k)}}{\E{\be(U_1)\be(U_2)\ldots \be(U_k)}}, \label{eqn:thiszero}
\end{align}
where $\mathbf{U}=(U_1,\ldots,U_k)$ and $U_1,\ldots,U_k$ are independent $\mathrm{U}[0,1]$ random variables. 

Informally, the preceding calculation can be interpreted as saying 
that conditional on $|\mathcal P| = k$, the probability of seeing a given excursion $\be$ is proportional to $(\int_{0}^1 \be(u) du)^k$, and that this bias can be captured by choosing $k$ height-biased leaves of 
the conditioned tree (or, equivalently, points of the conditioned excursion). We next derive the consequences of this fact for distribution of the lengths in the core. 

\subsection{The subtree spanned by the height-biased leaves}
Recall that to obtain the core from the excursion $2 \te$ and the points $\mathcal P$ we first form the subtree $T_C(2\te,\mathcal P)$ of $\mathcal T_{2\te}$ which is spanned by the points
$\{x:\xi = (x,y) \in \mathcal P\} \cup \{r(x):\xi=(x,y) \in \mathcal P\}$, and then identify $x$ and $r(x)$ for each $\xi \in \mathcal P$. 
This is depicted in Figure~\ref{fig:contkernel}. 
We remark that $T_C(2\te,\mathcal P)$ is a subtree of the reduced tree $T_R(2\te,\mathcal P)$, since 
$T_R(2\te,\mathcal P)$ is the subtree of $\mathcal T_{2 \te}$ which is spanned by the root and the points in $\{x:\xi=(x,y) \in \mathcal P\}$, and each point $r(x)$ (or rather its equivalence class) is 
on the path from the root to $x$.

Conditional on $|\mathcal P|=k$, the tree $T_R(2\te,\mathcal P)$ consists of $2k - 1$ line-segments.  If $k \ge 2$, the core is obtained from them as follows.  First sample the $k$ \emph{path-points} corresponding to the leaves (this corresponds to the choice of the points $W_1,\ldots,W_k$ above).   The $2(k-1)$ vertices of the core are then precisely the branch-points which were already present and the path-points which have just been sampled, less whichever of these points happens to be closest to the root.  (This can be either a branch-point or a path-point.)  Now throw away the line-segment closest to the root (this gives $T_C(2\te, \mathcal P)$) and make the vertex-identifications.  This yields a core $C(2\te, \mathcal P)$ which has precisely $3(k-1)$ edges.

If $k = 1$, the core is obtained from the subtree of the tilted tree spanned by the root and the single height-biased leaf by sampling the path-point, throwing away the segment closest to the root and making the vertex-identification.

In either case, we will find it helpful here to think of the $2k-1$ line-segments which make up our reduced tree as a combinatorial tree with edge-lengths rather than as a real tree. This tree with edge-lengths has a certain \emph{tree-shape} which we will find it convenient to represent as a labeled binary combinatorial tree.  Label the leaves $1,2,\ldots,k$ arbitrarily.  Now label internal vertices by the concatenation of all of the labels of their descendants which are leaves.  We do not label the root.  Write $L_v$ for the length of the unique shortest line-segment which joins the vertex labeled $v$ to another vertex nearer the root. Write $T$ for the set of vertices (vertex-labels) of this tree, 
excluding the root. Since the edges of the tree can be derived from the vertex labels, we will refer to $T$ as the tree-shape. In the following, we write $w\preceq v$ to denote that $w$ is on the path between $v$ and the root, including $v$ 
but not the root. 

\begin{lem} \label{lem:treeshapelengths}
Let $k \ge 1$.  For a tree-shape $t$ and edge-lengths $\ell_v, v \in t$, let $\bl = \{\ell_v, v \in t\}$.  The joint density of the tree-shape $T$ and lengths $L_v, v \in T$ is
\[
\tilde{f}(t, \bl) \propto \left[\prod_{i=1}^k \Bigg(\sum_{w \preccurlyeq i} \ell_w \Bigg)\right] \cdot \Bigg(\sum_{v \in t} \ell_v\Bigg) \cdot \exp \Bigg( -\frac{1}{2} \Bigg( \sum_{v \in t} \ell_v \Bigg)^2 \Bigg).
\]
\end{lem}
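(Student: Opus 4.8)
The plan is to compute the joint density of the tree-shape $T$ and the edge-lengths $(L_v)_{v \in T}$ by combining two ingredients already developed in the excerpt. The first ingredient is the biased description of the conditioned excursion from equation~(\ref{eqn:thiszero}): conditional on $|\mathcal P| = k$, the tilted excursion $2\te$ together with the projections of the Poisson points behaves like a standard Brownian excursion $\be$ reweighted by $\prod_{i=1}^k \be(U_i)$, with $U_1, \dots, U_k$ i.i.d.\ uniform on $[0,1]$ marking the $k$ leaves of the reduced tree. The second ingredient is Aldous' description of the law of the subtree of the Brownian CRT spanned by the root and $k$ i.i.d.\ uniform leaves: the tree-shape is uniform over binary tree-shapes with $k$ labelled leaves, and the vector of $2k-1$ edge-lengths $(\ell_v)_{v \in t}$ (in the $2\be$ normalization) has density proportional to $s\, e^{-s^2/2}$ where $s = \sum_{v \in t} \ell_v$ is the total length --- this is the ``line-breaking''/f.d.d.\ law of the CRT, which gives exactly the factor $(\sum_{v \in t} \ell_v)\exp(-\tfrac12(\sum_{v \in t}\ell_v)^2)$ appearing in the statement.

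So the first step is to write, for a test function $f$ of the reduced tree,
\[
\E{f(T_R(2\te,\mathcal P)) \mid |\mathcal P| = k} = \frac{\E{f(T_R(2\be, \mathbf{U}))\, \be(U_1)\cdots\be(U_k)}}{\E{\be(U_1)\cdots\be(U_k)}},
\]
which is just (\ref{eqn:thiszero}) applied to a function depending only on the reduced tree. The second step is to express $\be(U_i)$ in terms of the reduced tree: the key geometric fact is that $2\be(U_i) = d_{2\be}(\text{root}, \text{leaf } i)$ is precisely the distance from the root to leaf $i$ in $\mathcal T_{2\be}$, hence equals $\sum_{w \preccurlyeq i} \ell_w$, the sum of edge-lengths along the path from leaf $i$ to the root. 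Therefore $\prod_{i=1}^k \be(U_i) = 2^{-k}\prod_{i=1}^k \big(\sum_{w \preccurlyeq i} \ell_w\big)$, which is a function of the tree-shape and edge-lengths alone. The third step is to substitute Aldous' spanning-subtree law for the distribution of $(T_R(2\be,\mathbf U))$ under the unweighted measure: integrating $f$ against the product of (i) the uniform tree-shape law, (ii) the density $\propto (\sum_v \ell_v)\exp(-\tfrac12(\sum_v \ell_v)^2)$ on edge-lengths, and (iii) the reweighting factor $\prod_{i=1}^k(\sum_{w \preccurlyeq i}\ell_w)$, and absorbing all constants (the $2^{-k}$, the uniform tree-shape probability, and the normalization $\E{\be(U_1)\cdots\be(U_k)}$) into the proportionality constant, yields exactly $\tilde f(t, \bl)$ as claimed. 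One should check the edge case $k=1$ separately, but there the reduced tree is a single segment, the tree-shape is trivial, $\prod_{i=1}^1(\sum_{w\preccurlyeq 1}\ell_w) = \ell$ and $\sum_{v}\ell_v = \ell$, so the density is $\propto \ell^2 e^{-\ell^2/2}$, consistent with $2\be(U_1)$ being Rayleigh-distributed after the extra height-bias.

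The main obstacle is making the invocation of ``Aldous' spanning-subtree law'' fully rigorous in the form needed --- namely, that the joint law of the \emph{combinatorial tree-shape together with its $2k-1$ edge-lengths} of the subtree of $\mathcal T_{2\be}$ spanned by the root and $k$ i.i.d.\ uniform points is the stick-breaking law, with tree-shape uniform and edge-length density $\propto s e^{-s^2/2}$. The cleanest route is to cite Aldous~\cite{aldous93crt3} (the random f.d.d.'s of the CRT) together with Theorem~\ref{thm:aldousrecursive} and the stick-breaking construction recalled in the introduction, which identifies these $2n-1$ branch-lengths explicitly; one then needs to note that the measure-change by $\prod_i \be(U_i)$ commutes with conditioning on the spanned subtree since the factor is a function of that subtree only. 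Care is also needed to confirm that the root, the $k$ leaves, and the $k-1$ branch-points really give $2k-1$ edges with the labelling/$\preccurlyeq$ conventions of the lemma, and that no degeneracies (coincident branch-points, a leaf landing on an ancestral path) occur with positive probability --- all of which hold almost surely for the continuum tree. Everything else is bookkeeping of constants.
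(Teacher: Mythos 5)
Your proposal is correct and takes essentially the same route as the paper's proof: apply the change of measure~(\ref{eqn:thiszero}) to reduce to a Brownian excursion $\be$ with $k$ i.i.d.\ uniform marks $U_1,\dots,U_k$, identify $\be(U_i)$ with the root-to-leaf-$i$ distance $\sum_{w\preccurlyeq i}\ell_w$ in the spanned subtree (so the reweighting factor $\prod_i\be(U_i)$ becomes $\prod_i\sum_{w\preccurlyeq i}\ell_w$ up to a constant), and then invoke Aldous' formula (equation~(13) of~\cite{aldous91crt2}) for the joint density $(\sum_v\ell_v)\exp(-\tfrac12(\sum_v\ell_v)^2)$ of the tree-shape and edge-lengths of the reduced subtree of the CRT. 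The paper cites the same formula and performs exactly the same substitution; the minor normalization bookkeeping you flag (factors of~2) is absorbed into the proportionality constant in both treatments.
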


\begin{proof}
In order to see this, recall from page \pageref{twostage} that if $| \mathcal P |=k$ then the $k$ leaves are at heights given by $\te(V_1), \te(V_2), \ldots, \te(V_k)$ where, given $\te$, $V_1, V_2, \ldots, V_k$ are independent and identically distributed with density proportional to $\te(u)$.  From the excursion $\te$ and the values $V_1, V_2, \ldots, V_k$, it is possible to read off the tree-shape $T$ and the lengths $L_v, v \in T$.  
We will write $T = T(\te,\mathbf V)$.

Given a particular tree-shape, if $v = i_1i_2\ldots i_r$ then the vertex $v$ is at height
\[
\min \{\te(u) :
V_{i_1} \wedge V_{i_2} \wedge \dots \wedge V_{i_r} \le u \le V_{i_1} \vee V_{i_2} \vee \dots \vee V_{i_r} \}.
\]
Thus, if $v$ has parent $w = v i_{r+1}i_{r+2} \ldots i_{r+s}$ for some $i_{r+1}, i_{r+2},  \ldots, i_{r+s}$ all different from $i_1, \ldots, i_r$, then
\begin{align*}
L_v = & \min \{\te(u) :
V_{i_1} \wedge V_{i_2} \wedge \dots \wedge V_{i_r} \le u \le V_{i_1} \vee V_{i_2} \vee \dots \vee V_{i_{r}} \} \\
& - \min \{\te(u) :
V_{i_1} \wedge V_{i_2} \wedge \dots \wedge V_{i_{r+s}} \le u \le V_{i_1} \vee V_{i_2} \vee \dots \vee V_{i_{r+s}} \}.
\end{align*}
In order to make the dependence on $\te$ and $\mathbf{V} = (V_1, V_2, \ldots, V_k)$ clear, write $L_v = L_v(\te, \mathbf V)$.
So, using the tower law for conditional expectations and the change of measure as we did in (\ref{eqn:thiszero}), we obtain 
\begin{equation}
 \Prob{L_v(\te, \mathbf V) > x_v, v \in T(\te, \mathbf V)~|~| \mathcal P |=k}
= \frac{\E{\I{L_v(\be,\mathbf U) > x_v, v \in T(\be,\mathbf U)} \be(U_1) \be(U_2) \ldots \be(U_k)}}{\E{\be(U_1)\be(U_2)\ldots \be(U_k)}}, \label{eqn:thisone}
\end{equation}
where $\mathbf U = (U_1, U_2, \ldots, U_k)$ and $U_1, U_2, \ldots, U_k$ are independent $\mathrm{U}[0,1]$ random variables.
Note that $T(\be,\mathbf U)$ is then the tree-shape of the subtree of a standard Brownian CRT spanned by $k$ uniform leaves, and $\{L_v(\be, \mathbf U), v \in T(\be, \mathbf U)\}$ are its lengths.
It follows from equation (13) of Aldous~\cite{aldous91crt2} that for $T(\be,\mathbf U)$, 
the tree-shape and lengths have joint density
\begin{equation} \label{eqn:aldousdens}
f(t, \bl) = \Bigg(\sum_{v \in t} \ell_v \Bigg) \cdot \exp\left( -\frac{1}{2} \left(\sum_{v \in t} \ell_v \right)^2 \right).
\end{equation}
In particular, $T(\be, \mathbf U)$ is uniform on all possible tree-shapes and the lengths 
$\{L_v(\be, \mathbf U), v \in T(\be, \mathbf U)\}$
have an exchangeable distribution.  Moreover, for $1 \leq i \leq k$,
\[
\be(U_i) = \sum_{w \preccurlyeq i} L_w(\be, \mathbf U).
\]
Then from (\ref{eqn:thisone}), writing the expectations as integrals over the density and differentiating, we obtain the claimed result.
\end{proof}

\noindent \textbf{Remark.} Given this density representation, a natural hope would be that different values of $k$ could be coupled to obtain an increasing family of weighted trees $\{(T_k,\{L_v,v \in T_k\})\}_{k=1}^{\infty}$ such that 
for each $k$, $T_k$ and $\{L_v,v \in T_k\}$ have joint distribution given by the density in Lemma \ref{lem:treeshapelengths}. However, it is possible to check by hand that for the smallest 
non-trivial case, $k=4$, the distribution on tree shapes induced by the density in Lemma \ref{lem:treeshapelengths} is not uniform, and so the most naive strategy for accomplishing 
such a coupling (start from 
an increasing sequence of uniform leaf-labeled binary trees -- Catalan trees -- and then augment with random edge lengths) is unsurprisingly doomed to failure.

\subsection{Adding the points for identification: the pre-core}\label{precore}
Now consider the tree $T_R(2\te, \mathcal P)$ additionally marked with the path-points.  This yields a new tree with edge-lengths which will be important in the sequel, so we will call it the \emph{pre-core} (see Figure~\ref{fig:precore}, and compare with Figure~\ref{fig:reduced_tree}).  In particular, the pre-core consists of $3k-1$ line-segments whose lengths we will now describe.  

\begin{figure}[htb]
\centering
\begin{picture}(420,130)
\put(0,0){\includegraphics[scale=.7]{reduced_tree}}
\put(280,0){\includegraphics[scale=.7]{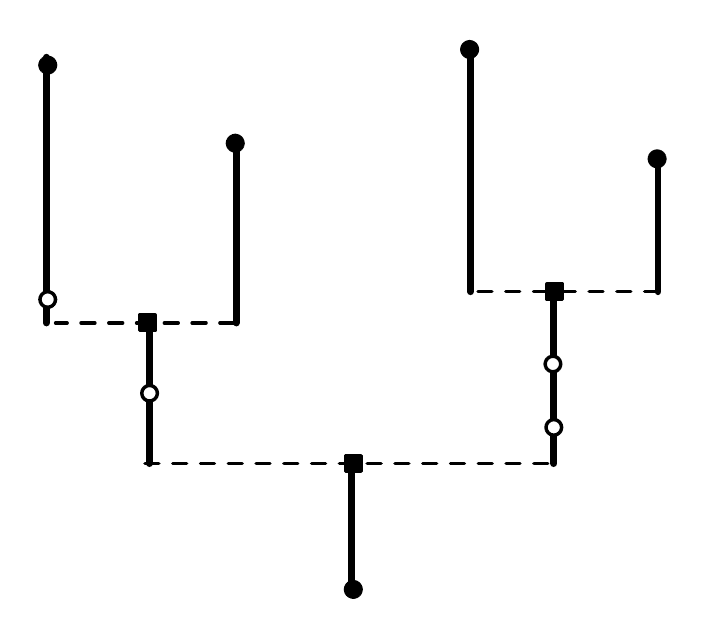}}
\put(28,57){$a$}
\put(63,45){$b$}
\put(121,49){$c$}
\put(147,33){$d$}
\put(288,120){$A$}
\put(330,100){$B$}
\put(370,123){$C$}
\put(409,100){$D$}
\put(303,66){$1$}
\put(350,40){$2$}
\put(385,72){$3$}
\put(280,63){$a$}
\put(302,45){$b$}
\put(384,53){$c$}
\put(395,40){$d$}
\end{picture}
\caption{\label{fig:precore}An excursion $h$ and the pre-core corresponding to the pointset $\mathcal Q=\{a, b, c, d\}$ (which has size $k = 4$). The pre-core is a combinatorial tree with edge-lengths.  It has $3k$ vertices: the root, the leaves, the path-points $a,b,c,d$ and the branch-points $1,2,3$. The dashed lines have zero length. }
\end{figure}

\begin{lem} \label{lem:pre-corelengths}
Suppose $k \ge 1$.  The lengths of the $3k-1$ line-segments in the pre-core have an exchangeable distribution.  With an arbitrary labeling, write $M_1, M_2, \ldots, M_{3k-1}$ for these lengths; then their joint density is proportional to
\begin{equation}\label{eq:pre-corelengths}
\left(\sum_{i=1}^{3k-1} m_i \right) \cdot \exp \Bigg( -\frac{1}{2} \left(\sum_{i=1}^{3k-1} m_i \right)^2 \Bigg).
\end{equation}
\end{lem}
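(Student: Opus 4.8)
The plan is to recognise the pre-core as the reduced tree $T_R(2\te,\mathcal P)$ with $k$ extra degree-two vertices (the path-points) inserted, and to notice that the biasing factor $\prod_{i=1}^k\big(\sum_{w\preccurlyeq i}\ell_w\big)$ appearing in Lemma~\ref{lem:treeshapelengths} is cancelled exactly when the uniformly placed path-points are integrated out. First I would pin down the conditional law of the path-points given the reduced tree. By the two-stage description on page~\pageref{twostage}, conditionally on $\te$ and on the leaf positions $\mathbf V$ — equivalently, conditionally on the reduced tree — the $k$ path-points are independent, the one on the path from the root to leaf $i$ being uniform along that path; the length of that path is exactly $h_i:=\sum_{v\preccurlyeq i}L_v$, the height of leaf $i$. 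Since the reweighting by $\be(U_1)\cdots\be(U_k)$ which produces the law given $|\mathcal P|=k$ (cf.\ (\ref{eqn:thisone})) is a function of the leaf heights alone, hence of the reduced tree alone, this conditional description is unchanged under $\mathbb P(\,\cdot\mid|\mathcal P|=k)$.

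Combining this with Lemma~\ref{lem:treeshapelengths}, the joint density of the reduced tree-shape $T$, its edge-lengths $\mathbf L=(L_v)_{v\in T}$ and the path-point positions $\mathbf W=(W_1,\dots,W_k)$ satisfies
\[
\tilde f(t,\bl)\,\prod_{i=1}^k\frac{\I{0\le w_i\le h_i}}{h_i}\ \propto\ \Big(\sum_{v\in t}\ell_v\Big)\exp\!\Big(-\tfrac12\big(\textstyle\sum_{v\in t}\ell_v\big)^2\Big)\prod_{i=1}^k\I{0\le w_i\le h_i},
\]
where $h_i=\sum_{v\preccurlyeq i}\ell_v$; the product $\prod_ih_i$ in $\tilde f$ is killed precisely by the uniform densities $\prod_ih_i^{-1}$. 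Recall that the pre-core a.s.\ consists of $2k-1+k=3k-1$ segments and, being the reduced tree with extra marked points, has total length $\sum_{v\in T}L_v$.

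Next I would change variables to the $3k-1$ pre-core segment lengths, working one pre-core tree-shape $\tau$ at a time. Smoothing away the degree-two path-point vertices of $\tau$ recovers a unique reduced tree-shape $t(\tau)$, and the combinatorial structure of $\tau$ also records, for each reduced-tree segment, which path-points lie on it and in which order; thus $\{\text{pre-core shape}=\tau\}$ corresponds to a single ``cell'' in $(T,\mathbf L,\mathbf W)$-space, on which every pre-core length is an explicit linear function of the $L_v$'s and $W_i$'s (an unsplit $L_v$; a difference of the heights of two consecutive path-points on one segment; a leaf height minus a path-point height; a path-point height minus a branch-point height; and so on). On this cell the map $(\mathbf L,\mathbf W)\mapsto(M_1,\dots,M_{3k-1})$ is a linear bijection onto $(0,\infty)^{3k-1}$ with Jacobian $\pm1$, and it preserves the total length; I would verify this by the elementary computation that also settles the base case $k=1$, where the cell is $\{L_1>0,\ 0<W_1<L_1\}$ and the map is $(L_1,W_1)\mapsto(W_1,L_1-W_1)$. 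Pushing the displayed density through this change of variables, conditionally on $\{\text{pre-core shape}=\tau\}$ the vector of pre-core lengths has density proportional to $\big(\sum_jm_j\big)\exp\!\big(-\tfrac12(\sum_jm_j)^2\big)$ on $(0,\infty)^{3k-1}$, and the normalising constant of this density depends only on $3k-1$, not on $\tau$.

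Since this conditional density is the same for every pre-core shape $\tau$, it is also the unconditional density of the $3k-1$ lengths (listed in any shape-independent way, e.g.\ in uniformly random order); as $(\sum_jm_j)\exp(-\tfrac12(\sum_jm_j)^2)$ is a symmetric function, this is exactly (\ref{eq:pre-corelengths}) and it gives the asserted exchangeability — the case $k=1$ being the degenerate instance (one reduced shape, one pre-core shape, no marginalisation). The step I expect to be the main obstacle is this change-of-variables bookkeeping when several path-points fall on a single reduced-tree segment: one has to check carefully that the resulting piecewise-linear map is a genuine measure-preserving bijection onto the full positive orthant, so that no stray Jacobian or domain factor is left behind. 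Everything else is immediate from Lemma~\ref{lem:treeshapelengths} and the cancellation $\prod_ih_i\cdot\prod_ih_i^{-1}=1$.
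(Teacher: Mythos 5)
Your proposal is correct and follows essentially the same route as the paper: start from the joint density of Lemma~\ref{lem:treeshapelengths}, note that the uniform path-point densities $\prod_i h_i^{-1}$ exactly cancel the height-biasing factor, and then change variables to the $3k-1$ pre-core segment lengths. The paper factors this change of variables into ``which reduced-tree edge each path-point hits'' followed by a Dirichlet$(1,\ldots,1)$ split of that edge, while you perform a single piecewise-linear, unit-Jacobian change of variables cell-by-cell over pre-core shapes; these are the same computation.
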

\begin{proof}
Consider the locations of the marks.  The mark corresponding to leaf $i$ is uniform on the path from the root to the leaf $i$.  In a tree with tree-shape $t$ and lengths $\ell_v, v \in t$, this path has length $\sum_{w \preccurlyeq i} \ell_w$. For each leaf $1 \leq i \leq k$, let $w_i \preccurlyeq i$ be the vertex of $t$ closest to the root such that the path-point corresponding to $i$ lies between $w_i$ and the root.
The tree-shape, the lengths of the $2k-1$ edges of the tree and the vertices $w_i, 1 \le i \le k$ below which the uniform random variables corresponding to leaves $1, 2, \ldots, k$ fall have joint density
\begin{equation} \label{eqn:density}
\tilde{f}(t,\bl) \cdot \prod_{i=1}^k \frac{\ell_{w_i}}{\sum_{w \preccurlyeq i} \ell_w}
\propto \left(\prod_{i=1}^k \ell_{w_i}\right) \cdot \left(\sum_{v \in t} \ell_v \right) \cdot \exp\Bigg( -\frac{1}{2} \left(\sum_{v \in t} \ell_v \right)^2 \Bigg).
\end{equation}
Given that the uniform random variable corresponding to leaf $i$ falls in the edge below vertex $w_i$, the length $\ell_{w_i}$ gets split at a uniform point.  More generally, if $r$ uniforms fall in a particular edge below a vertex $w$, of length $\ell$, that edge gets split with an independent $\Dir(1,1, \ldots, 1)$ random variable, where the Dirichlet has $r+1$ co-ordinates.  Write $M_{w}^{1}, M_{w}^{2}, \ldots, M_{w}^{r+1}$ for the resulting lengths.  Note that the joint density of these lengths is then $\ell^{-r}$.  It follows that, whenever we split an edge of length $\ell$ into $r+1$ pieces, the density of the resulting pieces exactly cancels the factor of $\ell^r$ in (\ref{eqn:density}). Let $r(w)=|\{1\le i\le k: w_i=w\}|$ be the number path-points falling in the edge below $w$. Then by a change of variable (still conditional on the uniform random variable corresponding to leaf $i$ falling in the edge below $w_i$, for $1 \le i \le k$), the lengths $ M_w^{1}, M_w^{2}, \ldots, M_w^{r(w)+1}, w \in T$ have joint density proportional to
\[
\Bigg(\sum_{w \in T} \sum_{j=1}^{r(w) + 1} m_w^j \Bigg) \cdot \exp \left(-\frac{1}{2} \Bigg(\sum_{w \in T} \sum_{j=1}^{r(w) + 1} m_w^j \Bigg)^2 \right).
\]
Since this is symmetric in the variables $m_w^1, m_w^2, \ldots, m_w^{r(w) + 1}, w \in T$, we may take an arbitrary relabeling of the lengths.  The lengths, now labeled $M_1, M_2, \ldots, M_{3k-1}$, then have joint density proportional to (\ref{eq:pre-corelengths}), as required.
\end{proof}

\subsection{The lengths after identifications: the core}
Now recall that  we obtain the core from the pre-core by chopping off the line-segment closest to the root and making the vertex identifications.  We now know that the line-segments involved have an exchangeable distribution and so, in particular, the exact labeling is unimportant.  

\begin{lem} \label{lem:lengths}
\textsc{Complex components.} Suppose $k \ge 2$ and let $m = 3(k-1)$.  Write $B$ for the length of the line-segment closest to the root, write $C^*$ for the length of the core-edge to which it attaches and write $C_1, C_2, \ldots, C_{m-1}$ for the lengths of the other core edges.  Then
\[
(C_1, C_2, \ldots, C_{m-1}, C^*) \equidist \sqrt{\Gamma} (Z_1, Z_2, \ldots, Z_{m-1}, Z_m),
\]
where $(Z_1, Z_2, \ldots, Z_{m-1}, Z_m) \sim \Dir(1,1,\ldots,1,2)$ is independent of $\Gamma\sim \mathrm{Gamma}(\frac{m+1}2,\frac 12)$. The random variable $B$ depends on $C_1, C_2, \ldots, C_{m-1}, C^*$ only through their sum.  Conditional on $C_1+ C_2 + \cdots + C_{m-1} + C^* = w$, $B$ has density proportional to
\[
(w+ b) \exp \Big( -\frac{1}{2}\left[(w+b)^2 - w^2\right] \Big).
\]
\textsc{Unicyclic components.} Suppose that $k = 1$.  Write $C$ for the length of the cycle and $B$ for the length of the line-segment attaching it to the root.  Then
\begin{equation} \label{eqn:lollipoplengths}
(C,B) \equidist  \sqrt{\Gamma} (U, 1-U),
\end{equation}
where $\Gamma \sim \Ga(\frac 32,\frac 12)$ and $U \sim \mathrm{U}[0,1]$ are independent.
\end{lem}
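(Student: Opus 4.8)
The plan is to build everything on top of Lemma~\ref{lem:pre-corelengths} and its proof. Inspecting that proof, one sees that, conditionally on $|\mathcal P|=k$ \emph{and on the combinatorial structure of the pre-core} (the underlying tree-shape, which edge of the reduced tree each path-point falls in, and the order in which coinciding marks occur), the joint density of the $3k-1$ pre-core segment-lengths is proportional to $(\sum_i m_i)\exp(-\frac12(\sum_i m_i)^2)$ on $(0,\infty)^{3k-1}$; in particular this density does not depend on the structure, so the pre-core lengths are exchangeable and independent of the pre-core structure.

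Next I would record the purely geometric bookkeeping relating the core to the pre-core. The core is obtained by deleting the line-segment nearest the root and then making the $k$ identifications $x\sim r(x)$. Each identification merges a path-point (degree $2$) with its leaf (degree $1$) into a single degree-$3$ vertex and turns the segment between them into a loop, creating no degree-$2$ vertex; deleting the root-segment, on the other hand, lowers the degree of its former attachment vertex by one, and one checks that after the identifications this vertex has degree exactly $2$ — whether it was originally a branch-point or a path-point — and so is suppressed on passing to the core, its two incident pre-core segments coalescing into one core-edge, while every other branch-point and (merged) path-point keeps degree $3$. Hence, for $k\ge2$ with $m=3(k-1)$, the $m$ core-edges are: one edge $C^*$ equal to the sum of two pre-core segments, $m-1$ edges $C_1,\dots,C_{m-1}$ each a single pre-core segment, and the remaining pre-core segment is $B$; for $k=1$ the two pre-core segments are exactly $B$ and the cycle $C$. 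Since this correspondence is a deterministic function of the structure, and the pre-core lengths are exchangeable and independent of it, conditioning on the structure, relabelling, and then removing the conditioning gives, for $k\ge2$,
\[
(C_1,\dots,C_{m-1},C^*,B)\;\equidist\;(M_1,\dots,M_{m-1},\,M_{m}+M_{m+1},\,M_{m+2}),
\]
where $(M_1,\dots,M_{3k-1})$ has density $\propto(\sum_i m_i)\exp(-\frac12(\sum_i m_i)^2)$; similarly $(C,B)\equidist(M_1,M_2)$ when $k=1$.

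The rest is an elementary change of variables. Splitting $M_{m}+M_{m+1}$ as $(c^*u,\,c^*(1-u))$ with $u=M_m/(M_m+M_{m+1})$ contributes a Jacobian factor $c^*$, and integrating $u$ over $[0,1]$ shows that $(C_1,\dots,C_{m-1},C^*,B)$ has joint density $\propto c^*\,(w+b)\exp(-\frac12(w+b)^2)$, where $w=\sum_{j=1}^{m-1}c_j+c^*$. Reading this two ways: as a function of $b$ with $w$ fixed it equals $\propto(w+b)\exp(-\frac12[(w+b)^2-w^2])$, which is the claimed conditional law of $B$ and depends on the core-edges only through $w$; integrating out $b\in(0,\infty)$ via $\int_0^\infty(w+b)e^{-(w+b)^2/2}\,db=e^{-w^2/2}$ leaves $(C_1,\dots,C_{m-1},C^*)$ with density $\propto c^*\,e^{-w^2/2}$. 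Finally, passing to coordinates $(r,z_1,\dots,z_{m-1})$ with $r=w$ and $c_j=rz_j$ (Jacobian $r^{m-1}$), and using the density (\ref{eqn:sqrtgamma}) together with the fact that $\Dir(1,\dots,1,2)$ has density $\propto z_m$, one identifies this with the law of $\sqrt{\Gamma}\,(Z_1,\dots,Z_m)$ for $(Z_1,\dots,Z_m)\sim\Dir(1,\dots,1,2)$ independent of $\Gamma\sim\Ga(\frac{m+1}2,\frac12)$. The case $k=1$ is the same computation with $m=2$ and no merge: $(C,B)$ has density $\propto(c+b)e^{-(c+b)^2/2}$, and in coordinates $(r,u)=(c+b,\,c/(c+b))$ this factors as (density $\propto r^2e^{-r^2/2}$ in $r$, so $r^2\sim\Ga(\frac32,\frac12)$) times (uniform in $u$), which is exactly (\ref{eqn:lollipoplengths}).

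I expect the main obstacle to be the geometric bookkeeping of the second paragraph: one must handle with care the several configurations of the attachment vertex of $B$ (branch-point versus path-point, and in the path-point case the fact that the identification producing the merge simultaneously closes a cycle through that vertex) and verify in each that deleting $B$ suppresses exactly one vertex, so that precisely one pair of pre-core segments coalesces into $C^*$. Once this is combined with the structure-independence of the pre-core lengths extracted from the proof of Lemma~\ref{lem:pre-corelengths}, the distributional content is routine.
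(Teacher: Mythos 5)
Your proposal is correct and takes essentially the same approach as the paper: both identify the core edge lengths with the pre-core segment lengths via the exchangeability and structure-independence extracted from Lemma~\ref{lem:pre-corelengths}, and then perform the same change of variables (the paper relabels ``without loss of generality'' and invokes Dirichlet aggregation rather than integrating out an auxiliary ratio $u$, but these are cosmetic differences). One small imprecision in your second paragraph: identifying a path-point with its leaf closes a cycle, not necessarily a loop, since the pre-core path between them may pass through other branch-points or path-points, but this does not affect the degree bookkeeping on which your argument rests.
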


\begin{proof}
Suppose first that $k \ge 2$.  One of the $3k-1$ edges $M_1, M_2, \ldots, M_{3k-1}$ is closest to the root.  Since the distribution of these random variables is exchangeable we can, without loss of generality, assume that $B = M_{3k-1}$.  Of the remaining lengths $M_1, M_2, \ldots, M_{3k-2}$, all but the two which (after vertex-identifications) are incident to the discarded length straightforwardly become edges of the core.  Once again without loss of generality, we can take $C_i = M_i$ for $1 \le i \le 3k-4$.  The two edges which are incident to the discarded length become a single core-edge, of length $C^* = M_{3k-3} + M_{3k-2}$.

We next make a straightforward change of variables: for $1 \le i \le 3k-3$, let
\[
V_i = \frac{M_i}{M_1 + \cdots + M_{3k-2}}.
\]
Let $W =  M_1 + \cdots + M_{3k-2}$.  Then the joint density of $V_1, V_2, \ldots, V_{3k-3}, W$ and $B = M_{3k-1}$ is easily shown to be
\[
w^{3k-3}(w+b) \exp \Big(-\frac{1}{2}(w + b)^2 \Big).
\]
This proves that $V_1, V_2, \ldots, V_{3k-3}$ are independent of $W$ and $B$ and that
\[
(V_1, V_2, \ldots, V_{3k-3}, 1 - V_1 - \cdots - V_{3k-3}) \sim \Dir(1,1,\ldots,1).
\]
Moreover it proves that $W$ has density proportional to $w^{3k-3} \exp\left(-\frac{1}{2}w^2 \right)$ and that, conditional on $W = w$, $B$ has density proportional to
\[
(w+ b) \exp \Big( -\frac{1}{2}\left[(w+b)^2 - w^2\right] \Big).
\]
The result follows by recalling that $m = 3k-3$ and the standard property of Dirichlet random variables that if $(A_1, A_2, \ldots, A_r) \sim \Dir(\alpha_1, \alpha_2, \ldots, \alpha_r)$ then $(A_1, A_2, \ldots, A_{r-2}, A_{r-1} + A_r) \sim \Dir(\alpha_1, \alpha_2, \ldots, \alpha_{r-2}, \alpha_{r-1} + \alpha_r)$.

In the case $k = 1$, a similar calculation shows that the two lengths $(M_1,M_2)$ which make up the pre-core satisfy
\begin{equation}
(M_1, M_2) \equidist \sqrt{\Gamma} (U, 1-U),
\end{equation}
where $\Gamma \sim \Ga(3/2,1/2)$ and $U \sim \mathrm{U}[0,1]$ independently.  Since these lengths are exchangeable, we can arbitrarily declare the first to be the length of the cycle and the second to be the length of the segment attaching it to the core.
\end{proof}

The results stated in the following lemma are straightforward and may be found, for example, in \citet{bertoingoldschmidt04coagfrag}.

\begin{lem} \label{lem:dir}
Suppose that $(Y_1, Y_2, \ldots, Y_m) \sim \Dir(1,1,\ldots,1)$.  Let $Y^*$ be a size-biased pick from this vector, and relabel the other co-ordinates arbitrarily $Y^*_1, Y^*_2, \ldots, Y^*_{m-1}$.  Then 
\[
(Y^*_1, Y^*_2, \ldots, Y^*_{m-1}, Y^*) \sim \Dir(1,1,\ldots,1,2).
\]
Moreover, if $U$ is an independent $\mathrm{U}[0,1]$ random variable,
\[
(Y^*_1, Y^*_2, \ldots, Y^*_{m-1}, Y^*U, Y^*(1-U)) \sim \Dir(1,1,\ldots,1).
\]
\end{lem}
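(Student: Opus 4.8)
The plan is to avoid quoting \citet{bertoingoldschmidt04coagfrag} and instead establish both identities directly from the gamma representation of the Dirichlet law recalled in Section~\ref{gamdir}. Throughout I would write $(Y_1,\dots,Y_m)=S^{-1}(\Gamma_1,\dots,\Gamma_m)$ with $\Gamma_1,\dots,\Gamma_m$ i.i.d.\ $\Ga(1,1)$ and $S=\Gamma_1+\dots+\Gamma_m$, so that $(Y_1,\dots,Y_m)$ is exchangeable with constant density $(m-1)!$ on $\Delta_m$ with respect to $\mathscr L_{m-1}$; this is the only structural input needed.

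For the first identity, I would test against a bounded measurable $g:\Delta_m\to\R$. Conditioning on $(Y_1,\dots,Y_m)$ and then on the size-biased index (which, since $\sum_j Y_j=1$, equals $j$ with probability $Y_j$) gives
\[
\E{g(Y^*_1,\dots,Y^*_{m-1},Y^*)}=\E{\,\sum_{j=1}^m Y_j\, g\big(Y_{\sigma_j(1)},\dots,Y_{\sigma_j(m-1)},Y_j\big)},
\]
with $\sigma_j$ a fixed enumeration of $\{1,\dots,m\}\setminus\{j\}$. By exchangeability every summand has the same expectation, so the right-hand side is $m\,\E{Y_m\,g(Y_1,\dots,Y_m)}=m!\int_{\Delta_m}y_m\,g(y)\,d\mathscr L_{m-1}(y)$; since $\Gamma(m+1)/(\Gamma(1)^{m-1}\Gamma(2))=m!$, the weight $m!\,y_m$ is exactly the $\Dir(1,\dots,1,2)$ density, which is the claim. (This is essentially the change of variables already performed implicitly in the proof of Lemma~\ref{lem:lengths}, so one could alternatively just refer back to it.)

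For the second identity I would use the first part to realise $(Y^*_1,\dots,Y^*_{m-1},Y^*)$ as $T^{-1}(G_1,\dots,G_{m-1},G_m)$, with $G_1,\dots,G_{m-1}$ i.i.d.\ $\Ga(1,1)$, $G_m\sim\Ga(2,1)$ independent of them, and $T=G_1+\dots+G_m$; then split $G_m=G'+G''$ into i.i.d.\ $\Ga(1,1)$ pieces. The key point is that $V:=G'/G_m$ is $\mathrm{Beta}(1,1)=\mathrm{U}[0,1]$ and, by the gamma/Dirichlet construction of Section~\ref{gamdir} applied with two coordinates, is independent of $G_m$ and hence of $(G_1,\dots,G_{m-1},G_m)$, so of $(Y^*_1,\dots,Y^*_{m-1},Y^*)$. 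Since $(Y^*_1,\dots,Y^*_{m-1},Y^*V,Y^*(1-V))=T^{-1}(G_1,\dots,G_{m-1},G',G'')$ is $m+1$ independent $\Ga(1,1)$ variables over their sum, it has the $\Dir(1,\dots,1)$ law on $\Delta_{m+1}$, and matching the joint law of $\big((Y^*_j)_{j\le m-1},Y^*,V\big)$ with that of $\big((Y^*_j)_{j\le m-1},Y^*,U\big)$ for an externally supplied independent $U\sim\mathrm{U}[0,1]$ finishes it. I expect the only subtle point to be precisely this independence: one cannot simply ``split the last coordinate by an independent uniform'' without the beta--gamma algebra certifying that the splitting proportion is independent both of the block being split and of the untouched coordinates; granting that, everything reduces to a one-line density check.
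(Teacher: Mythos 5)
Your argument is correct. A small caveat on the comparison: the paper does not actually prove Lemma~\ref{lem:dir}, it merely refers the reader to \citet{bertoingoldschmidt04coagfrag}, so there is no in-paper argument to match against; your proposal supplies a self-contained proof via the gamma representation of the Dirichlet law, which is consistent with the toolkit the paper sets up in Section~\ref{gamdir}.

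On the substance, both halves check out. For the first identity, the exchangeability reduction to $m\,\E{Y_m\,g(Y_1,\dots,Y_m)}$ is valid, and the resulting weight $m!\,y_m$ is indeed the $\Dir(1,\dots,1,2)$ density on $\Delta_m$ since $\Gamma(m+1)/(\Gamma(1)^{m-1}\Gamma(2))=m!$. For the second, the key independence you flag, namely that $V=G'/(G'+G'')$ is $\mathrm U[0,1]$ and independent of $G_m=G'+G''$ (hence of the whole vector $(G_1,\dots,G_m)$ and so of $(Y_1^*,\dots,Y_{m-1}^*,Y^*)$), is exactly the two-coordinate beta--gamma fact, and with it $(Y^*_1,\dots,Y^*_{m-1},Y^*V,Y^*(1-V))=T^{-1}(G_1,\dots,G_{m-1},G',G'')$ is manifestly $\Dir(1,\dots,1)$ on $\Delta_{m+1}$; replacing $V$ by an external $U$ is then legitimate because they have the same joint law with the other coordinates. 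One minor remark: it is worth stating explicitly that the first identity gives only the marginal of $(Y^*_1,\dots,Y^*_{m-1},Y^*)$ and says nothing about a splitting variable, so the independence of $V$ really must be derived (as you do) rather than read off from the first part; you correctly identify this as the only delicate point. You might also note that the second identity in fact implies the first by aggregating the last two coordinates, which gives a quick consistency check.
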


The distributional identity (\ref{eqn:1}) follows straightforwardly from the first part of Lemma~\ref{lem:lengths}. 
Furthermore, by appeal to the finite-$n$ picture, it is obvious that the point at which the path from the root attaches to the core is a uniform point on the core. It follows that in the limit, the core edge to which the root attaches is a size-biased pick 
from among the core-edges, and attaches to a uniform point along this edge. This is exactly the procedure described in Lemma \ref{lem:dir}, and so 
the identity (\ref{eqn:2}) follows from Lemmas \ref{lem:lengths} and \ref{lem:dir}.  The identity (\ref{eqn:4}) follows from (\ref{eqn:lollipoplengths}) using the fact that the density of $\sqrt{\Gamma}$ is proportional to $x^2 e^{-x^2/2}$.  This identifies it as the size-biased Rayleigh distribution, written $R^*$. Finally, $U R^*$ has the same distribution as the modulus of a $\mathrm{Normal}(0,1)$ random variable (see p.121 of \citet{evanspitmanwinter06rayleigh}). \newline

\noindent \textbf{Remark.} Lemmas \ref{lem:lengths} and \ref{lem:dir} in fact tell us more than we need: they also
specify the distribution of the line-segment which originally attached the root to the
core.  In the case $k \ge 2$, this turns out to have the distribution of the time until the next point in an inhomogeneous Poisson process with instantaneous rate $t$ at time $t$, given that the first point is at $C(0)$.  (This is \emph{not} true in the case $k = 1$ where, as we will see in the next section, in order to have a stick-breaking construction we have to start from the core \emph{and} the edge attaching it to the root. 
We alluded to this somewhat mysterious requirement earlier in the paper.)
Moreover, this line-segment attaches in a uniform position on the core.  These facts will be useful in the next section.

\section{The stick-breaking construction of a limit component} \label{subsec:recursive}

We now prove the extension of the stick-breaking construction of the Brownian CRT which we stated in Theorem~\ref{thm:recursiveconstruction}.  Fix a number $k \geq 1$ of surplus edges.  We have already described the joint distribution of the $3k-1$ line-segments in the pre-core.  In order to create the whole metric space, it suffices to ``decorate'' the pre-core with the other parts of the tilted tree, and then make the right vertex-identifications.  As mentioned in the introduction, we use Aldous' notion of \emph{random finite-dimensional distributions} (see \cite{aldous93crt3}).  By Theorem 3 of \cite{aldous93crt3}, the distribution of a continuum random tree coded by an excursion is determined by the distributions of the sequence of finite subtrees obtained by successively sampling independent uniform points in the tree.  We will use this idea on our tilted trees.

It is perhaps surprising that the construction should be so similar to that of the Brownian CRT, considering that we start from a tilted excursion.  The key observation (proved below) is that the effect of tilting the tree can be felt entirely in the total length of the $k$ special branches which are used to create the pre-core or, equivalently, in the total length of the $3k-1$ line-segments which make up the pre-core.  

Once again, we need some notation and it is convenient to think again of tree-shapes and lengths.  The tree-shape of the pre-core is not a binary tree because the path-points are vertices of degree 2.  So we need an elaboration of our labeling-scheme.  The pre-core has $3k$ vertices which we will label as follows.  Label the leaves by $1,2,\ldots,k$ (in some arbitrary order).  Then label recursively from the leaves towards the root, which is, itself, left unlabeled.  For an internal vertex of degree three, label it by the concatenation of the labels of its two children.  For an internal vertex of degree two (i.e.\ a path-point) give it the label of its child, but with the label of the leaf whose path-point it is repeated.  Write $T^{(0)}$ for this tree-shape (where, as in the case of $T_R(2 \te, \mathcal P)$, the root is excluded), and for $v \in T^{(0)}$ write
$L^{(0)}_v$ for the length of the unique shortest line-segment which joins $v$ to another vertex nearer the root.  Then by Lemma~\ref{lem:pre-corelengths}, given $T^{(0)} = t$, the joint density of $L_v^{(0)}, v \in T^{(0)}$ is proportional to
\[
\left(\sum_{v \in t} \ell_v \right) \cdot \exp \Bigg( - \frac{1}{2} \left( \sum_{v \in t} \ell_v \right)^2 \Bigg).
\]
(Note the similarity to the density (\ref{eqn:aldousdens}) of the edge lengths in the subtree of a Brownian CRT spanning a collection of uniform leaves.)

In Lemma~\ref{lem:decorating} we prove that, given these lengths, the decoration process (stick-breaking construction) works in the same way as it does in Aldous' construction of the Brownian CRT.  That is, we take the inhomogeneous Poisson process from time $\sum_{v \in T^{(0)}} L_v^{(0)}$ onwards, and for each inter-jump time (the first being measured from $\sum_{v \in T^{(0)}} L_v^{(0)}$), we add a line-segment of that length at a uniformly-chosen point on the structure already created.

Starting from the pre-core edges, if we sample a uniform point from the tilted tree, it corresponds to a branch which attaches somewhere on the pre-core, splitting one of the pre-core lengths in two.  Sampling further points similarly causes the splitting in two of lengths already present.  Let $T^{(n)}$ be the tree-shape obtained by taking the pre-core and sampling $n$ additional uniform points of the excursion, with labeling as before.  Let $L_v^{(n)}$ be the corresponding lengths.

\begin{lem} \label{lem:decorating}
For $n \geq 1$, conditional on $T^{(n)} = t$, the joint density of the lengths $L_v^{(n)}$ is proportional to
\[
\left(\sum_{v \in t} \ell_v \right) \exp \left( - \frac{1}{2} \left( \sum_{v \in t} \ell_v \right)^2 \right).
\]
Moreover, this is the same as the joint distribution of lengths obtained by the stick-breaking construction proposed above.
\end{lem}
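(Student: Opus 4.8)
The plan is to prove both assertions together by induction on $n$, using Aldous' recursive (stick-breaking) description of the Brownian CRT's finite-dimensional distributions as the engine. The base case $n=0$ is immediate: the pre-core $T^{(0)}$ is, by construction, the configuration from which the stick-breaking procedure is started, and its lengths have the claimed density by Lemma~\ref{lem:pre-corelengths}. For the inductive step I would show that, conditionally on the reduced tree $T^{(n-1)}$ and its lengths, adding an $n$-th uniform point of the \emph{tilted} tree acts in exactly the same way as adding an $n$-th uniform point to an ordinary Brownian CRT, and that this common operation both preserves the displayed density and coincides with one step of the stick-breaking construction.

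The first and most delicate ingredient is to check that the change of measure (\ref{eqn:changemeas}) plays no role once $T^{(n-1)}$ and its lengths have been revealed. Arguing as in the derivation of (\ref{eqn:thiszero}) and in the proof of Lemma~\ref{lem:treeshapelengths}, conditionally on $|\mathcal P|=k$ the joint law of the tilted excursion $\te$, the $k$ height-biased leaves, the $k$ path-points and $n$ further independent uniform points of the excursion has Radon--Nikodym density proportional to $\prod_{i=1}^{k}\be(U_i)$ with respect to the law of the same objects built from a \emph{standard} Brownian excursion $\be$ with $k$ i.i.d.\ $\mathrm{U}[0,1]$ leaves $U_1,\dots,U_k$, path-points uniform on the respective root-to-leaf paths, and $n$ i.i.d.\ $\mathrm{U}[0,1]$ points. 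Since $\be(U_i)=\sum_{w\preccurlyeq i}L^{(n-1)}_w$ is just the length of the root-to-leaf-$i$ path in $T^{(n-1)}$, the factor $\prod_{i=1}^{k}\be(U_i)$ is measurable with respect to $\sigma(T^{(n-1)},\{L^{(n-1)}_v\})$; hence the regular conditional distribution, given $(T^{(n-1)},\{L^{(n-1)}_v\})$, of everything not yet revealed --- in particular of where a fresh uniform point of the tilted tree attaches to $T^{(n-1)}$ and of the length of the new branch --- is the same as for an unbiased Brownian CRT conditioned on the corresponding data.

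Given this, I would invoke Aldous' theorem that the stick-breaking construction reproduces the CRT's finite-dimensional distributions (\cite{aldous93crt3}; see Theorem~\ref{thm:aldousrecursive} and the surrounding discussion): for the ordinary CRT, the subtree spanned by the root and $n-1$ uniform points grows to the one spanned by $n$ uniform points by attaching a new branch at a point distributed according to length-measure on the subtree, the branch-length, given current total length $s$, having density $\propto (s+j)\exp(-\frac{1}{2}[(s+j)^2-s^2])$ on $[0,\infty)$ (the next inter-jump time of an inhomogeneous Poisson process of rate $t$ at time $t$, restarted from $s$). By the previous paragraph this is exactly how $T^{(n-1)}$ becomes $T^{(n)}$, and it is, verbatim, one step of the stick-breaking construction. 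A short computation then closes the induction: if $(T^{(n-1)},\{L^{(n-1)}_v\})$ has density $\propto s\,e^{-s^2/2}$ with $s=\sum_v\ell_v$ the sum of the $3k-1+2(n-1)$ lengths, then picking the split edge with probability $\ell_w/s$, splitting $\ell_w$ uniformly (Jacobian $\ell_w^{-1}$) and appending a branch of length $j$ with the density above gives, after the cancellation
\[
s\,e^{-s^2/2}\cdot\frac{\ell_w}{s}\cdot\frac{1}{\ell_w}\cdot (s+j)\exp\!\Big(-\frac{1}{2}\big[(s+j)^2-s^2\big]\Big)=(s+j)\,e^{-(s+j)^2/2},
\]
a density $\propto s'\,e^{-s'^2/2}$ in the $3k-1+2n$ new lengths, with $s'=s+j$; the tree-shape is updated by a uniform attachment in both descriptions.

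The hard part will be the second paragraph: making rigorous the claim that conditioning on $T^{(n-1)}$ and its lengths ``uses up'' precisely the tilting bias --- i.e.\ that $\prod_{i=1}^{k}\be(U_i)$ is a function of the already-revealed data and nothing more --- and that, the bias removed, the regular conditional law of the unrevealed part of the tilted tree (including the induced path-point locations) genuinely matches that of a Brownian CRT, so that Aldous' recursive description applies verbatim. Everything else is bookkeeping: one citation and one line of cancellation.
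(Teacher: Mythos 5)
Your proposal is correct in substance and rests on the same two pillars as the paper's proof --- the change of measure transferring the tilted picture to a biased Brownian CRT, and the observation that the bias factor $\prod_i \be(U_i')$ is determined by the root-to-leaf path lengths already present in the pre-core --- but you organize the argument as an induction on $n$, whereas the paper does a single direct density computation. The paper writes the analogue of~(\ref{eqn:thisone}) with the extra points $\mathbf U$ and $\mathbf W$ included, then computes the joint density of all $3k-1+2n$ line segments in one pass using the cancellation arguments from Lemmas~\ref{lem:treeshapelengths} and~\ref{lem:pre-corelengths}, and finally matches that density against the stick-breaking law by citing Aldous' Lemma~21. Your inductive strategy instead isolates the transition $T^{(n-1)} \to T^{(n)}$ and identifies it with one step of the stick-breaking construction by a regular-conditional-distribution argument; this buys you a clean probabilistic statement (a reweighting by a function of the conditioning variable leaves all regular conditional laws given that variable unchanged), at the cost of the bookkeeping you flag as the ``hard part''.

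One point to tighten: when you invoke Aldous' recursive description of the CRT's f.d.d.'s for the conditional step, you are conditioning not only on the subtree spanned by the $k+n-1$ revealed leaves but also on the $k$ path-points. You should say explicitly that, under the Brownian measure, these path-points are chosen uniformly along the already-revealed root-to-leaf paths and are therefore conditionally independent of the unrevealed parts of the excursion given the spanned subtree; hence they do not alter the conditional law of the attachment point or the new branch length, and Aldous' stick-breaking step applies unchanged. With that addition, your second paragraph is rigorous: $\prod_i\be(U_i')$ is $\sigma(T^{(0)},\{L^{(0)}_v\})$-measurable (hence measurable with respect to the larger $\sigma$-algebra $\sigma(T^{(n-1)},\{L^{(n-1)}_v\})$), and reweighting by a function of the conditioning data does not change regular conditional distributions, exactly as you say. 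The cancellation computation at the end is correct and is really a rephrasing of the density argument the paper attributes to Aldous' Lemma~21.
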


\begin{proof}
The first part of the proof is similar to that of Lemma \ref{lem:treeshapelengths}.  As there, the $k$ special leaves are at heights $\te(V_1), \te(V_2), \ldots, \te(V_k)$ where, given $\te$, $V_1, V_2, \ldots, V_k$ are independent and identically distributed with density proportional to $\te(u)$.  Let $W_1, W_2, \ldots, W_k$ be the $\mathrm{U}[0,1]$ random variables which give the positions along the paths to the root of the path-points corresponding to leaves $1,2,\ldots, k$ respectively.  $W_1, W_2, \ldots, W_k$ are mutually independent and independent of everything else.  Finally, let $U_1, U_2, \ldots$ be another sequence of independent $\mathrm{U}[0,1]$ random variables, which will generate the uniformly-chosen leaves, having heights $\te(U_1), \te(U_2), \ldots$.  For $n \geq 1$, write $T^{(n)} = T^{(n)}(\te, \mathbf{V}, \mathbf{W}; \mathbf{U})$ and, for $v \in T^{(n)}$, $L_v^{(n)} = L_v^{(n)}(\te, \mathbf{V}, \mathbf{W}; \mathbf{U})$ and note that these quantities can be calculated explicitly from the random ingredients $\te$, $\mathbf{V} = (V_1, V_2, \ldots, V_k)$, $\mathbf{W} = (W_1, W_2, \ldots, W_k)$, and $\mathbf{U} = (U_1, U_2, \ldots, U_n)$, although in a somewhat complicated way.  Using the change of measure,
\begin{multline*}
\Cprob{L_v^{(n)}(\te, \mathbf{V}, \mathbf{W}; \mathbf{U}) > x_v \ \forall\ v \in T^{(n)}(\te, \mathbf{V}, \mathbf{W}; \mathbf{U})}{|\mathcal P | = k}  \\
\propto \E{\I{L^{(n)}_v(\be, \mathbf{U}', \mathbf{W}; \mathbf{U}) > x_v \ \forall\ v \in T^{(n)}(\be, \mathbf{U}', \mathbf{W}; \mathbf{U})} \be(U_1') \be(U_2') \ldots \be(U_k')},
\end{multline*}
where $U_1', U_2', \ldots, U_k'$ are more independent $\mathrm{U}[0,1]$ random variables, independent of everything else.  Note that $T^{(n)}(\be, \mathbf{U}', \mathbf{W}; \mathbf{U})$ is just the tree-shape derived from picking $k+n$ uniform leaves and picking path-points for the first $k$ of them, and $L_v^{(n)}(\be, \mathbf{U}', \mathbf{W}; \mathbf{U})$ are the corresponding lengths.  The claimed joint density then follows from the same arguments as used in the proofs of Lemmas~\ref{lem:treeshapelengths} and~\ref{lem:pre-corelengths}.

The proof that this is the joint distribution given by the stick-breaking construction is identical to that of Lemma 21 in Aldous~\cite{aldous93crt3}.
\end{proof}
We now prove Theorem \ref{thm:recursiveconstruction} and thereby justify the third of our construction techniques. 
\begin{proof}[Proof of Theorem~\ref{thm:recursiveconstruction}]
Let $m=m(n)$ and $p=p(n)$ be such that $mn^{-2/3} \to 1$ and $pn \to 1$. Consider a probability space in which $m^{-1/2}G_m^p \to g(2 \te,\mathcal P)$ almost surely as $n \to \infty$; such a space exists by Theorem \ref{thm:clcrg} and Skorohod's 
representation theorem.  Theorem 7 of \cite{janson93birth} implies that for any $3$-regular kernel $K$ with fixed surplus $k\geq 2$ and with $t$ loops, 
\[
\p{ K(G_m^p) = K | G_m^p~\mbox{has surplus }k} \propto (1+o(1)) \Bigg(2^t\prod_{e \in E(K)} \mathrm{mult}(e)!\Bigg)^{-1},
\]
as $m \to \infty$.  Furthermore, by \cite{LuPiWi1994}, Theorem 4, all vertices of degree three in $K(G_m^p)$ are separated by distance of order $m^{1/2}$, so all such vertices remain distinct in the limit. 
This proves that the shape of the limiting kernel $K(2 \te,\mathcal P)$ has the claimed distribution.  

Next, the fact that the lengths in the kernel and rooted pre-core are as in Theorem \ref{thm:recursiveconstruction} follows from (\ref{eqn:1}), (\ref{eqn:2}) and (\ref{eqn:4}). Since $2\te$ almost surely encodes a compact real tree, it is also clear that the trees $T^{(m)}$ of 
Lemma \ref{lem:decorating} converge almost surely to the tree encoded by $2 \te$. Lemma 21 of \cite{Us1} says that a finite number of vertex identifications encoded by a fixed number of points under the contour process will not disrupt this convergence and so, 
by the validity of the method described in Section \ref{vitt}, we obtain almost sure convergence of the sequence of metric spaces created by the described process to a metric space with distribution $g(2 \te,\mathcal P)$. This completes the proof.
\end{proof}

\section{An urn process to analyze the stick-breaking construction}\label{sec:urns_distrib}

A careful analysis of the stick-breaking construction will enable us to prove Theorems~\ref{thm:limit_kernel} and~\ref{thm:unicyclic}. 

We assume that, as in Section~\ref{sec:length_core} the core has $m$ edges of lengths $L_1(0), L_2(0), \ldots,
L_m(0)$. Then we can think of our stick-breaking construction as a sort
of continuous Markovian balls-in-urns procedure acting on the lengths $L_1(n),
L_2(n), \ldots, L_m(n)$ representing the (continuous) quantities of $m$ different
colors that we have at step $n$ of this procedure, which we now describe. For each $n \ge 0$, we write $C(n) = \sum_{i=1}^m L_i(n)$ and, for $1\le i\le m$, we define the proportion $P_i(n)=L_i(n)/C(n)$.  

Given an inhomogeneous Poisson point process of instantaneous rate $t$
at time $t$ has a point at $c$, the density of time until the next
point is proportional to
\[
(a+c) \cdot \exp \Big( -\frac{1}{2}(a+c)^2 + \frac{1}{2}c^2 \Big).
\]
Suppose now we have already constructed
$L_1(n), L_2(n), \ldots, L_m(n)$ for some $n \geq 0$.  At step $n+1$, select an index $I(n+1)$ from $\{1, \dots, m\}$ so that
\[
\Cprob{I(n+1) = i}{P_1(n), P_2(n), \ldots, P_m(n)} = P_i(n).
\]
Then sample a random variable $A(n+1)$ such that, conditional on
$C(n) = c$, $A(n+1)$ has density proportional to
\[
(a+c) \cdot \exp \Big(-\frac{1}{2} (a+c)^2 + \frac{1}{2} c^2 \Big).
\]
Finally, set
\[
L_j(n+1)  =  \begin{cases}
         L_j(n) & \text{ if $j \neq I(n+1)$}, \\
         L_j(n) + A(n+1) & \text{ if $j = I(n+1)$},
\end{cases}
\]
and set $C(n+1)=C(n)+A(n+1)$.
In other words, add a quantity $A(n+1)$ to color $I(n+1)$ and increment $n$.  It is clear that this procedure describes precisely the dynamics of the process $(L_1(n), L_2(n), \ldots, L_m(n))_{n \ge 0}$. 

We now characterize the evolution of the total length $C(n)$.

\begin{lem} \label{lem:densities}
  For $c \geq 0$, the random variable $C(n)^2$ has a $\Ga((m+2n+1)/2, 1/2)$ distribution.  Moreover, for $n \geq 1$, 
\[
\frac{1}{C(n)} \left(C(n-1), A(n) \right) \sim \Dir(n+2m-1, 1),
\]
independently of $C(n) = C(n-1) + A(n)$.
\end{lem}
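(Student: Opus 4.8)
The plan is to proceed by induction on $n$, using the dynamics of the urn process together with the $k=1$-style beta-gamma algebra introduced in Section~\ref{gamdir}. For the base case $n=0$, we must have $C(0)^2 \sim \Ga((m+1)/2,1/2)$; for $k\ge 2$ this is exactly the identity (\ref{eqn:1}) (equivalently the first part of Lemma~\ref{lem:lengths}), and for $k=1$ (where $m=1$ if we include the segment attaching the cycle to the root, matching the rooted-core convention) it is (\ref{eqn:4}) combined with (\ref{eqn:lollipoplengths}). So the base case is already in hand from Section~\ref{sec:length_core}.

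For the inductive step, suppose $C(n-1)^2 \sim \Ga((m+2n-1)/2,1/2)$. We know that, conditional on $C(n-1)=c$, the increment $A(n)$ has density proportional to $(a+c)\exp(-\tfrac12(a+c)^2+\tfrac12 c^2)$ on $a\ge 0$. The key computational observation is that this is exactly the conditional density (given $C(n-1)=c$) of $A(n)$ under the following alternative description: let $C(n)^2\sim\Ga((m+2n+1)/2,1/2)$ and, \emph{independently}, let $(C(n-1)/C(n),\,A(n)/C(n))\sim\Dir(m+2n-1,1)$ — i.e.\ $C(n-1)/C(n)$ is $\mathrm{Beta}(m+2n-1,1)$ — and set $C(n-1)=C(n)\cdot(C(n-1)/C(n))$, $A(n)=C(n)\cdot(A(n)/C(n))$. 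I would verify this by a direct change of variables: from the joint law of $(S,B)$ where $S:=C(n)^2\sim\Ga((m+2n+1)/2,1/2)$ and $B:=C(n-1)/C(n)$ with $B^2\sim\mathrm{Beta}((m+2n-1)/2,1)$ independently — actually it is cleaner to work with the gamma representation, writing $S = G + G'$ with $G\sim\Ga((m+2n-1)/2,1/2)$ and $G'\sim\Ga(1,1/2)=\mathrm{Exp}(1/2)$ independent, so $C(n-1)=\sqrt G$ and the extra exponential mass $G'$ furnishes $A(n)$ via $A(n)=\sqrt{G+G'}-\sqrt G$. Then one checks that, conditional on $C(n-1)=\sqrt G=c$, the variable $A(n)=\sqrt{c^2+G'}-c$ has precisely the density $(a+c)\exp(-\tfrac12(a+c)^2+\tfrac12 c^2)$ — this is a one-line substitution $a\mapsto g'=(a+c)^2-c^2$, $dg'=2(a+c)\,da$, against the $\mathrm{Exp}(1/2)$ density of $G'$. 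Given this, the inductive step is immediate: the marginal of $C(n-1)$ is correct by hypothesis, the conditional law of $A(n)$ given $C(n-1)$ matches the urn dynamics, hence the joint law of $(C(n-1),A(n))$ agrees with the gamma-additive construction, which gives both $C(n)^2\sim\Ga((m+2n+1)/2,1/2)$ and the stated Dirichlet splitting independently of the sum, by the standard gamma/Dirichlet relationship recalled in Section~\ref{gamdir}.

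The only subtlety — and the step I expect to need the most care — is matching parameters and bookkeeping for the two regimes $k\ge 2$ and $k=1$, since the statement writes $\Dir(n+2m-1,1)$ whereas the gamma representation above naturally produces the first parameter $(m+2n-1)/2$ in the \emph{squared} variable $C(n)^2$; one must be careful that the Dirichlet here is on the vector $(C(n-1),A(n))/C(n)$ of the \emph{lengths} themselves, not their squares, and confirm that $C(n-1)^2/C(n)^2\sim\mathrm{Beta}((m+2n-1)/2,1)$ is compatible with the claimed $(C(n-1)/C(n),A(n)/C(n))\sim\Dir(m+2n-1,1)$. In fact $Z\sim\mathrm{Beta}(\alpha,1)$ iff $Z^2\sim\mathrm{Beta}(\alpha/2,1)$ (since $\mathbb P(Z\le z)=z^\alpha$ iff $\mathbb P(Z^2\le z^2)=(z^2)^{\alpha/2}$), so with $\alpha=m+2n-1$ everything is consistent; I would spell this identity out explicitly to avoid any factor-of-two confusion, and also note that the Dirichlet notation $\Dir(n+2m-1,1)$ in the statement should read $\Dir(m+2n-1,1)$, which the proof makes clear. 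The remaining verification — that the density computation in the conditional increment really does telescope correctly — is the routine one-variable change of variables indicated above, and presents no real obstacle.
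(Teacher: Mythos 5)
Your proof is correct and, at bottom, takes the same inductive route as the paper's. The difference is one of packaging: the paper writes down the joint density of $(C(n-1), A(n))$ and changes variables directly to $(C(n-1)/C(n), C(n))$, while you propose the gamma-additive construction $C(n)^2 = C(n-1)^2 + G'$ with $G' \sim \mathrm{Exp}(1/2)$ independent, verify by a one-dimensional change of variables that this reproduces the urn dynamics' conditional law for $A(n)$ given $C(n-1)$, and then read off the claimed gamma marginal for $C(n)^2$ and the Dirichlet splitting from the standard gamma/Dirichlet independence. This is the same calculation seen from a synthetic rather than analytic angle; it makes the underlying structure --- that each urn step adds one independent $\mathrm{Exp}(1/2)$ unit to $C(n)^2$ --- a little more visible, but it buys nothing new logically, and both hinge on the same beta--gamma facts recalled in Section~\ref{gamdir}.

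Two small points. You correctly flag that the statement's $\Dir(n+2m-1,1)$ is a typo for $\Dir(m+2n-1,1)$; the paper's own computed density $v^{m+2(n-1)}$ for $V=C(n-1)/C(n)$ confirms this. However, your bookkeeping for the $k=1$ base case is slightly off: there the urn process is started from two pieces (the cycle and the segment attaching it to the root), so $m=2$ rather than $m=1$, which gives $C(0)^2 \sim \Ga(3/2,1/2)$ in agreement with Lemma~\ref{lem:lengths} and Procedure~2 (and with the two-component $\Dir(\frac12,\frac12)$ limit in Theorem~\ref{thm:unicyclic}). This has no bearing on your inductive step, which is fine.
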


\begin{proof}
We proceed by induction.  For $n = 0$, the first statement is clear
from (\ref{eqn:1}).  Suppose now that $C(n-1)$ has the claimed distribution.  Then $C(n-1)$ and $A(n)$ have joint
density proportional to
\[
c^{m+2(n-1)} (a+c) \exp \Big( - \frac{1}{2} (a + c)^2 \Big).
\]
Write $V = C(n-1)/(C(n-1)+A(n))$ and $W = C(n-1)+ A(n)$.  A straightforward change of
variables gives that $V$ and $W$ have joint density proportional to $v^{m+2(n-1)} w^{m+2n} e^{-w^2/2}$.
Hence, $V$ and $W$ are independent and have the claimed distributions.
\end{proof}

The continuous urn model described above can be studied using an associated discrete urn process related to
\[
N_j(n) = 1 + 2 \sum_{k = 1}^n \I{I(k) = j},
\]
the number of branches corresponding to the $j$th core edge at step $n$, for $n \geq 0$, $1 \leq j \leq m$.  The following lemma is in the same spirit as Exercises 7.4.11 to 7.4.13 of \citet{PitmanStFl}.

\begin{lem}\label{lem:cont2disc}Conditional on $N_1(n), \dots, N_m(n)$,
\begin{equation}\label{eq:pdir}
(P_1(n), P_2(n), \ldots, P_m(n)) \sim\Dir(N_1(n), N_2(n), \ldots, N_m(n)).
\end{equation}
Furthermore, the process $(N_1(n),\dots, N_m(n))_{n\ge 0}$ evolves as the number of balls of $m$ different colors in a P\'olya's urn process started with one ball of each color and where, at each step, the ball picked is returned to the urn along with two extra balls of the same color.
\end{lem}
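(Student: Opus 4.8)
The claim has two parts. The first, equation~\eqref{eq:pdir}, should follow by induction on $n$ together with Lemma~\ref{lem:densities} and the beta-gamma / Dirichlet algebra recalled in Section~\ref{gamdir}. The second, that $(N_1(n),\dots,N_m(n))_{n\ge 0}$ is a P\'olya urn with two extra balls of the matched color per step, should be essentially immediate from the definitions, once the first part is in place. I would organize the proof as a single induction that carries both statements simultaneously.

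\emph{Base case.} For $n=0$ we have $N_j(0)=1$ for all $j$, so the claim \eqref{eq:pdir} reads $(P_1(0),\dots,P_m(0))\sim\Dir(1,\dots,1)$, which is exactly \eqref{eqn:2}. The urn starts with one ball of each color by definition of $N_j(0)$.

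\emph{Inductive step.} Assume \eqref{eq:pdir} holds at step $n$. First note that $\sum_{j=1}^m N_j(n) = m+2n$, since each of the $n$ steps adds exactly $2$ to the total. By the stick-breaking dynamics, $I(n+1)$ is chosen to be $i$ with probability $P_i(n)$, and by the classical property of the Dirichlet distribution (a size-biased pick from $\Dir(\alpha_1,\dots,\alpha_m)$ selects coordinate $i$ with probability $\alpha_i/\sum_j\alpha_j$ after integrating out the $P$'s), conditional on $N_1(n),\dots,N_m(n)$ the index $I(n+1)$ equals $i$ with probability $N_i(n)/(m+2n)$; moreover, conditional on $\{I(n+1)=i\}$ and on $N_1(n),\dots,N_m(n)$, the vector $(P_1(n),\dots,P_m(n))$ is $\Dir(N_1(n),\dots,N_i(n)+1,\dots,N_m(n))$ (the extra $+1$ coming from the size-bias). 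This already shows that $N_j(n+1) = N_j(n) + 2\I{I(n+1)=j}$ has the P\'olya-urn transition: pick color $i$ with probability proportional to its current count $N_i(n)$, then add two balls of color $i$; so the second assertion follows once the induction is complete.

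\emph{Propagating \eqref{eq:pdir}.} It remains to show that after incrementing, $(P_1(n+1),\dots,P_m(n+1))\sim\Dir(N_1(n+1),\dots,N_m(n+1))$. Write $P_j(n+1) = L_j(n+1)/C(n+1)$. For $j\neq I(n+1)$ we have $L_j(n+1)=L_j(n) = C(n)P_j(n)$, and $L_{I(n+1)}(n+1) = C(n)P_{I(n+1)}(n) + A(n+1)$, with $C(n+1) = C(n)+A(n+1)$. By Lemma~\ref{lem:densities}, $C(n)/C(n+1)$ is $\Beta(\cdot,1)$ and $A(n+1)/C(n+1) = 1 - C(n)/C(n+1)$, independent of $C(n+1)$; I would condition on $\{I(n+1)=i\}$ and on $C(n)/C(n+1)=\rho$, so that
\[
(P_1(n+1),\dots,P_m(n+1)) = \rho\,(P_1(n),\dots,P_m(n)) + (1-\rho)\,e_i,
\]
where $e_i$ is the $i$th standard basis vector. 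Using the conditional law of $(P_j(n))$ as $\Dir(N_1(n),\dots,N_i(n)+1,\dots,N_m(n))$ together with the beta-gamma representation (realize a Dirichlet as normalized independent Gammas, multiply the $i$th coordinate's Gamma up from parameter $N_i(n)+1$ to $N_i(n)+2$ via an independent Beta factor absorbed by $1-\rho$, and renormalize), this mixture is exactly $\Dir(N_1(n),\dots,N_i(n)+2,\dots,N_m(n)) = \Dir(N_1(n+1),\dots,N_m(n+1))$. The independence of $(P_j(n+1))$ from $C(n+1)$ comes along for free from Lemma~\ref{lem:densities} and the Gamma-representation bookkeeping.

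\emph{Main obstacle.} The only delicate point is the algebraic verification in the last step: one must check that adding an independent $A(n+1)$ with the prescribed (Poisson-gap) conditional density to the $I(n+1)$-th coordinate updates the Dirichlet parameter by exactly $+2$ and not, say, $+1$, and that this is consistent with the size-bias already built into the conditional law of $(P_j(n))$ given $\{I(n+1)=i\}$. This is where Lemma~\ref{lem:densities} is essential: it pins down $(C(n-1),A(n))/C(n)\sim\Dir(n+2m-1,1)$, and matching the Beta/Gamma parameters $n+2m-1$ against $\sum_{j\neq i}N_j(n) + (N_i(n)+1) = m+2n$ is what forces the ``$+2$'' and hence the correct urn dynamics. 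I expect that once this matching is spelled out, the rest is routine.
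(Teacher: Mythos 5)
Your proof is correct and follows essentially the same route as the paper's: induction on $n$, the size-biased Dirichlet update (which the paper isolates and proves as Lemma~\ref{lem:dirsb}, and which you treat as classical — that is fine), and then Lemma~\ref{lem:densities} to feed the new increment $A(n+1)$ in via beta–gamma algebra, giving the $+2$ update in the $I(n+1)$th coordinate. One small slip to fix: in your ``main obstacle'' paragraph the total Dirichlet parameter after the size-bias is $\sum_{j\neq i}N_j(n) + (N_i(n)+1) = m+2n+1$, not $m+2n$, and this should match the Beta parameter $m+2n+1$ coming from Lemma~\ref{lem:densities} applied at step $n+1$ (the paper's statement of that lemma contains a transposition typo, $n+2m-1$ for $m+2n-1$, which is why the numerals look off).
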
	

In order to prove this, we first need to extend the first part of Lemma \ref{lem:dir}.  

\begin{lem} \label{lem:dirsb} Suppose that $(Y_1, Y_2, \ldots, Y_n)
  \sim \Dir(\alpha_1, \alpha_2, \ldots, \alpha_n)$.  Let
  $I$ be the index of a size-biased pick from this vector, i.e.\ $I$
  has conditional distribution $\Cprob{I = i}{Y_1, Y_2, \ldots, Y_n} = Y_i.$
Then 
\[
\Prob{I=i} = \frac{\alpha_i}{\sum_{j=1}^n \alpha_j}
\]
and, conditional on $I=i$, we have $(Y_1, Y_2, \ldots, Y_n) \sim \Dir(\alpha_1, \ldots, \alpha_{i-1}, \alpha_i + 1, \alpha_{i+1}, \ldots, \alpha_n).$
\end{lem}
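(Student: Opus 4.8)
The plan is to compute directly with the Dirichlet density. Recall that $(Y_1,\ldots,Y_n)\sim\Dir(\alpha_1,\ldots,\alpha_n)$ has density
\[
f(y_1,\ldots,y_{n-1}) = \frac{\Gamma(\alpha_1+\cdots+\alpha_n)}{\Gamma(\alpha_1)\cdots\Gamma(\alpha_n)}\prod_{j=1}^n y_j^{\alpha_j-1}
\]
on $\Delta_n$, where $y_n = 1-y_1-\cdots-y_{n-1}$. By definition of the size-biased pick, for any bounded measurable test function $\phi$ on $\Delta_n$,
\[
\Ec{\phi(Y_1,\ldots,Y_n)\I{I=i}} = \Ec{\phi(Y_1,\ldots,Y_n)\,Y_i},
\]
using the tower property and $\Cprob{I=i}{Y_1,\ldots,Y_n}=Y_i$. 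So the first step is simply to observe that the joint ``law'' of $(Y_1,\ldots,Y_n)$ on the event $\{I=i\}$ is the original Dirichlet density multiplied by the extra factor $y_i$, i.e.\ proportional to $y_i^{\alpha_i}\prod_{j\neq i}y_j^{\alpha_j-1}$.

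Next I would recognize this tilted density as (an unnormalized) $\Dir(\alpha_1,\ldots,\alpha_{i-1},\alpha_i+1,\alpha_{i+1},\ldots,\alpha_n)$ density: multiplying the $i$th coordinate's exponent $\alpha_i-1$ by an extra $y_i$ turns it into $(\alpha_i+1)-1$, which is exactly the effect of incrementing $\alpha_i$ by one. The normalizing constant of the latter Dirichlet is $\Gamma(1+\sum_j\alpha_j)/\big(\Gamma(\alpha_i+1)\prod_{j\neq i}\Gamma(\alpha_j)\big)$. Taking $\phi\equiv 1$ then gives
\[
\Prob{I=i} = \frac{\Gamma(\alpha_1+\cdots+\alpha_n)}{\Gamma(\alpha_1)\cdots\Gamma(\alpha_n)}\cdot\frac{\Gamma(\alpha_i+1)\prod_{j\neq i}\Gamma(\alpha_j)}{\Gamma(1+\sum_j\alpha_j)} = \frac{\Gamma(\alpha_i+1)}{\Gamma(\alpha_i)}\cdot\frac{\Gamma(\sum_j\alpha_j)}{\Gamma(1+\sum_j\alpha_j)} = \frac{\alpha_i}{\sum_{j=1}^n\alpha_j},
\]
using $\Gamma(x+1)=x\Gamma(x)$ twice. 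Dividing the tilted density by $\Prob{I=i}$ normalizes it, which identifies the conditional law of $(Y_1,\ldots,Y_n)$ given $I=i$ as $\Dir(\alpha_1,\ldots,\alpha_i+1,\ldots,\alpha_n)$, as claimed.

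There is essentially no obstacle here — the lemma is a one-line beta-gamma/Dirichlet size-biasing identity, and the only mild care needed is the bookkeeping with the Gamma normalizing constants and the fact that the tilted density integrates to $\Prob{I=i}$ rather than to $1$. (One could alternatively give the proof via the Gamma representation: write $Y_j = \Gamma_j/\sum_\ell\Gamma_\ell$ with $\Gamma_j\sim\Ga(\alpha_j,1)$ independent, note that size-biasing by $Y_i$ is the same as size-biasing the sum $\sum_\ell\Gamma_\ell$ by $\Gamma_i/\sum_\ell\Gamma_\ell$, and use that a $\Ga(\alpha_i,1)$ variable size-biased in this way becomes $\Ga(\alpha_i+1,1)$ while the others are unchanged; but the direct density computation is shorter.)
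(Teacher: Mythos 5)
Your proof is correct, but it takes a different route from the paper's. You work directly with the Dirichlet density on $\Delta_n$: size-biasing by $Y_i$ tilts the density by a factor $y_i$, which increments the exponent of $y_i$ from $\alpha_i-1$ to $\alpha_i$, and comparing normalizing constants gives both conclusions at once. The paper instead proves the lemma via the Gamma representation $Y_j = G_j/\sum_\ell G_\ell$ with $G_j\sim\Ga(\alpha_j,1)$ independent: it expands $\E{\Phi(Y)\I{I=i}} = \E{(G_i/G)\Phi(G_1/G,\ldots,G_n/G)}$, integrates over the density of $G_i$, and absorbs the extra factor of $x$ into the Gamma density, turning $\Ga(\alpha_i,1)$ into $\Ga(\alpha_i+1,1)$, then uses independence of proportions and total. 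You actually sketch this Gamma-representation alternative in your closing parenthetical, so you have both arguments in view. Your direct computation is a bit shorter and more self-contained; the paper's route avoids writing the $\Delta_n$ density explicitly and is perhaps more in the spirit of the beta-gamma algebra used throughout, but both are standard and fully rigorous.
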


\begin{proof}
  Let $G_1, G_2, \ldots, G_n$ be independent random variables such
  that $G_i \sim \mathrm{Gamma}(\alpha_i, 1)$ for $1 \le i \le n$.
  Write $G = \sum_{j=1}^n G_j$ and $G^{(i)} = G - G_i$.  Let $\Phi:
  \Delta_n \to \R^+$ be any non-negative measurable function.  Then
\[
\E{\Phi(Y_1, Y_2, \ldots, Y_n) \I{I=i}} = \E{\frac{G_i}{G} \Phi\left(\frac{G_1}{G}, \ldots, \frac{G_n}{G} \right)}.
\]
Note that $G$ is independent of $\left(\frac{G_1}{G}, \ldots, \frac{G_n}{G} \right)$ and so, since $\E{G} = \sum_{i=1}^n \alpha_i$,
\[
\E{G_i \Phi\left(\frac{G_1}{G}, \ldots, \frac{G_n}{G} \right)} = \E{G \cdot \frac{G_i}{G} \Phi\left(\frac{G_1}{G}, \ldots, \frac{G_n}{G} \right)} = \left(\sum_{i=1}^n \alpha_i\right) \E{\frac{G_i}{G} \Phi\left(\frac{G_1}{G}, \ldots, \frac{G_n}{G} \right)}.
\]
Integrating over the density of $G_i$, we see that
\begin{align*}
& \E{G_i \Phi\left(\frac{G_1}{G}, 
  \ldots, \frac{G_n}{G} \right)} \\
& = \E{ \int_0^{\infty} x \Phi \left(\frac{G_1}{x + G^{(i)}}, 
        \ldots, \frac{G_{i-1}}{x + G^{(i)}}, 
        \frac{x}{x + G^{(i)}}, 
        \frac{G_{i+1}}{x + G^{(i)}}, \ldots,  
        \frac{G_n}{x + G^{(i)}} \right)
        \frac{1}{\Gamma(\alpha_i)} x^{\alpha_i - 1} e^{-x} dx} \\
& = \frac{\Gamma(\alpha_i + 1)}{\Gamma(\alpha_i)}
\E{\Phi \left(\frac{G_1}{\gamma + G^{(i)}}, 
        \ldots, \frac{G_{i-1}}{\gamma + G^{(i)}}, 
        \frac{\gamma}{\gamma + G^{(i)}}, 
        \frac{G_{i+1}}{\gamma + G^{(i)}}, \ldots,  
        \frac{G_n}{\gamma + G^{(i)}} \right)},
\end{align*}
where $\gamma \sim \mathrm{Gamma}(\alpha_i + 1, 1)$, independently of
$G_j, j \neq i$.  The result follows since the proportions $G_j/(\gamma+G^{(i)})$, $j\ne i$ and $\gamma/(\gamma+G^{(i)})$ are independent of the total sum $\gamma+G^{(i)}$.
\end{proof} 

\begin{proof}[Proof of Lemma~\ref{lem:cont2disc}] We proceed by induction on $n$. It is clear that the distributional identity holds for $n
= 0$.  Suppose now that it also holds for $n = k$.  Then, by
Lemma~\ref{lem:dirsb},
\[
\Cprob{I(k+1) = i}{N_1(k), N_2(k), \ldots, N_m(k)} =
\frac{N_i(k)}{\sum_{j=1}^m N_j(k)}
\]
and, conditional on $N_1(k), N_2(k), \ldots, N_m(k)$ and $I(k+1)
= i$,
\[
(P_1(k), P_2(k), \ldots, P_m(k)) \sim
\Dir(N_1(k), \ldots, N_{i-1}(k), N_i(k) + 1,
N_{i+1}(k), \ldots, N_m(k)).
\]
Recall from Lemma~\ref{lem:densities} that
\[
\frac{1}{C(k+1)}(C(k), A(k+1)) \sim \Dir(m+2k+1,1),
\]
independently of $C(k+1)$.  So, conditional on $N_1(k), N_2(k), \ldots, N_m(k)$ and $I(k+1) = i$,
\[
(P_1(k+1), P_2(k+1), \ldots, P_m(k+1)) \sim \Dir(N_1(k), \ldots, N_{i-1}(k), N_i(k) + 2, N_{i+1}(k), \ldots, N_m(k)).
\]
The claimed results follow by induction on $n$.  
\end{proof}	

The proofs of Theorems \ref{thm:limit_kernel} and \ref{thm:unicyclic} rest on Lemmas~\ref{lem:convurn} and \ref{lem:convproc} below.

\begin{lem}\label{lem:convurn}
As $n \to \infty$,
\[
(P_1(n), P_2(n), \ldots, P_m(n)) \to (P_1, P_2, \ldots, P_m) \text{ a.s.,}
\]
where $(P_1, P_2, \ldots, P_m) \sim \Dir(\frac 12, \frac 12,
\ldots, \frac 12)$.
\end{lem}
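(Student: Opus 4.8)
The plan is to reduce the statement to the classical almost sure convergence of the proportions in a P\'olya urn, and then to transfer that limit to the continuous vector $(P_1(n),\dots,P_m(n))$ by observing that each coordinate is a bounded martingale.

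First I would invoke Lemma~\ref{lem:cont2disc}: conditional on $(N_1(n),\dots,N_m(n))$ we have $(P_1(n),\dots,P_m(n)) \sim \Dir(N_1(n),\dots,N_m(n))$, and $(N_1(n),\dots,N_m(n))_{n\ge 0}$ evolves as a P\'olya urn started with one ball of each colour in which the drawn ball is returned together with two extra balls of its colour. Writing $M_j(n) = N_j(n)/(m+2n)$, the classical result on such urns (\cite[][Section~VII.4]{FellerVol2}, \cite[][Chapter~V, Section~9]{AthreyaNey}, as recalled in the discussion preceding Theorem~\ref{thm:recursiveconstruction}) gives that $(M_1(n),\dots,M_m(n)) \to (P_1,\dots,P_m)$ almost surely, where $(P_1,\dots,P_m)\sim \Dir(\frac12,\dots,\frac12)$. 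So it remains to show that $(P_1(n),\dots,P_m(n))$ converges almost surely to this same limit.

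The key step is that, with respect to $\mathcal F_n = \sigma(L_1(i),\dots,L_m(i) : 0\le i\le n)$, each process $(P_j(n))_{n\ge 0}$ is a bounded martingale. Indeed, in the construction of the urn, given $\mathcal F_n$ the index $I(n+1)$ and the increment $A(n+1)$ are conditionally independent, with $\Cprob{I(n+1)=j}{\mathcal F_n} = P_j(n)$; hence, conditioning additionally on $A(n+1) = a$,
\[
\E{P_j(n+1) \mid \mathcal F_n,\, A(n+1)=a} = \frac{L_j(n) + a\,P_j(n)}{C(n)+a} = \frac{L_j(n)\,(C(n)+a)/C(n)}{C(n)+a} = P_j(n),
\]
so $\E{P_j(n+1)\mid \mathcal F_n} = P_j(n)$. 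Since $0\le P_j(n)\le 1$, the martingale convergence theorem gives $P_j(n) \to \widetilde P_j$ almost surely (and in $L^1$) for some random variable $\widetilde P_j$.

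Finally I would identify $\widetilde P_j$ with $P_j$ by an $L^2$ estimate. From the formula for the variance of a Dirichlet coordinate together with Lemma~\ref{lem:cont2disc},
\[
\E{\big(P_j(n)-M_j(n)\big)^2} = \E{\frac{M_j(n)\,(1-M_j(n))}{m+2n+1}} \le \frac{1}{4(m+2n+1)} \to 0,
\]
so $P_j(n) - M_j(n) \to 0$ in probability; combined with the two almost sure convergences above, this forces $\widetilde P_j = P_j$ almost surely, completing the proof. I expect the only real subtlety to be spotting the martingale property of $(P_j(n))$: note that the identity $\E{P_j(n+1)\mid \mathcal F_n} = P_j(n)$ holds regardless of the (somewhat awkward) conditional law of the increment $A(n+1)$, so once this is observed the remaining steps are routine. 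Should one wish to avoid the martingale argument, an alternative is to run Borel--Cantelli along a geometric subsequence using the $L^2$ bound, together with a crude a priori bound on the increments $A(n)$ to control the oscillation between consecutive terms of the subsequence; but this is messier.
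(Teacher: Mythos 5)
Your proof is correct and follows the same overall skeleton as the paper's: (i) observe that $(P_j(n))_{n\ge 0}$ is a bounded martingale, so the proportions converge a.s.; (ii) invoke Lemma~\ref{lem:cont2disc} to transfer to the discrete P\'olya urn and get that $N_j(n)/(m+2n)$ converges a.s.\ to a $\Dir(\tfrac12,\dots,\tfrac12)$ limit; (iii) identify the two limits. The paper leaves the martingale property unjustified (``it is straightforward to see''), whereas you spell out the computation $\E{P_j(n+1)\mid\mathcal F_n,A(n+1)=a}=P_j(n)$, which is a welcome addition. The only genuine difference is in step (iii): the paper writes the conditional Dirichlet law of $(P_1(n),\dots,P_m(n))$ as a vector of normalized sums of i.i.d.\ standard exponentials and then runs a Fatou-type argument to show $\p{P_i\ne N_i}=0$; you instead compute
\[
\E{\bigl(P_j(n)-N_j(n)/(m+2n)\bigr)^2}=\E{\frac{M_j(n)(1-M_j(n))}{m+2n+1}}\le\frac{1}{4(m+2n+1)}\to 0
\]
directly from the Dirichlet conditional variance formula, giving convergence in probability to zero in one line. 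Your version is tidier: it makes the identification an explicit $L^2$ estimate, avoids introducing the auxiliary exponential representation, and sidesteps the implicit coupling/law-of-large-numbers step that the paper's last equality in its Fatou display relies on. Both arguments are sound; the substance is the same, with your identification step being the more self-contained of the two.
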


\begin{proof}
It is straightforward to see that the process $(P_1(n), P_2(n), \ldots, P_m(n))_{n \ge 0}$
is a bounded martingale and so possesses an almost sure limit, $(P_1, P_2, \ldots, P_m)$.  So we
need only determine the distribution of the limit. We do so using the correspondence with the discrete urn process $(N_1(n),\dots, N_m(n))_{n\ge 0}$ given by Lemma~\ref{lem:cont2disc}.

By Lemma~\ref{lem:cont2disc}, $(N_1(n), N_2(n), \ldots, N_m(n))_{n \geq 0}$ is performing
P\'olya's urn scheme with $m$ colors where the ball picked is replaced along with two extra balls of the same color.  It is
standard (see, for example, Section VII.4 of \citet{FellerVol2} or Chapter V, Section 9.1 of \citet{AthreyaNey}) that the proportions of balls of each color converge almost
surely; indeed,
\[
\frac{1}{m + 2n}(N_1(n), N_2(n), \ldots, N_m(n)) \to (N_1, N_2, \ldots, N_m)
\]
almost surely, where $(N_1, N_2, \ldots, N_m) \sim \Dir(\frac 12, \frac 12, \ldots, \frac 12)$.  

Now let $(E_{j,k}, 1 \leq j
\leq m, k \geq 1)$ be i.i.d.\ standard
exponential random variables. Then, given $N_1(n), N_2(n), \ldots, N_m(n)$,
\begin{equation}\label{eq:pdist}
(P_1(n), P_2(n), \ldots, P_m(n))
\equidist \frac{1}{\sum_{j=1}^m \sum_{k=1}^{N_j(n)} E_{j,k}}
\Bigg(\sum_{k=1}^{N_1(n)} E_{1,k}, \sum_{k=1}^{N_2(n)} E_{2,k}, \ldots,
  \sum_{k=1}^{N_m(n)} E_{m,k} \Bigg).
\end{equation}
Thus, for each $i=1,\ldots,m$, using the fact that $P_i(n)$ and $N_i(n)$ both possess almost sure limits, we have 
\begin{align*}
\p{P_i \neq N_i} & = \sup_{\epsilon > 0}\p{\exists n_0,~\forall n \geq n_0, \left|P_i(n) - \frac{N_i(n)}{m+2n}\right|>\epsilon} \\
			 & \le \sup_{\epsilon > 0} \liminf_n \p{\left|P_i(n) - \frac{N_i(n)}{m+2n}\right|>\epsilon} \\
			 & = 0,
\end{align*}
where the second line follows from Fatou's lemma and the last equality follows from (\ref{eq:pdist}) and the fact that $N_1(n)+\ldots+N_m(n)=m+2n$. 
It follows that $\lim_{n \to \infty} (P_1(n),\ldots,P_m(n)) = (P_1, \ldots, P_m) = (N_1,\ldots,N_m)$ almost surely, which proves the lemma.
\end{proof}

\begin{lem} \label{lem:convproc} 
There exists a process $(X_1(n),\ldots,X_m(n))_{n \geq 0}$ such that for each $n$, 
conditional on the vectors $(N_1(n), N_2(n), \ldots, N_m(n))$ and $(X_1(n),\ldots,X_m(n))$, 
the sequence of additions to index $i$ up to time $n$ (including the initial length 
$L_i(0)$) is the sequence of the first $N_i(n)$ inter-jump times of an inhomogeneous
Poisson process of instantaneous rate $t$ at time $t$, all multiplied
by $X_i(n)$.  Moreover, under the above conditioning, 
these processes are independent for distinct $i$. Finally, 
$(X_1(n), X_2(n), \ldots, X_m(n)) \to (\sqrt{P_1}, \sqrt{P_2}, \ldots, \sqrt{P_m})$ almost surely as $n \to \infty$.
\end{lem}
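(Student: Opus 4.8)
The plan is to realise the continuous urn of this section, conditionally on the discrete P\'olya urn $(N_1(n),\dots,N_m(n))_{n\ge0}$ of Lemma~\ref{lem:cont2disc}, as $m$ \emph{independent, rescaled} copies of Aldous' stick-breaking construction of the Brownian CRT, and then to read off the rescaling factors in the limit. Write $\nu_i(n)=\frac12(N_i(n)-1)=\sum_{k\le n}\I{I(k)=i}$ for the number of additions made to index $i$ by step $n$, so that the $i$th edge tree at step $n$ is built from $\nu_i(n)+1$ sticks and has $N_i(n)$ line-segments. For each $i$ I would seek $X_i(n)>0$ such that, after rescaling distances by $X_i(n)^{-1}$, the $i$th edge tree has the law of the tree produced by $\nu_i(n)+1$ steps of the stick-breaking construction, and then write $\tilde L_i(n):=L_i(n)/X_i(n)$ for its (intrinsic) total length, so that $\tilde L_i(n)^2\sim\Ga(\frac{N_i(n)+1}{2},\frac12)$ --- since the total length of the tree built by $p$ steps of that construction is the position of the $p$th point of the underlying inhomogeneous Poisson process of rate $t$ at time $t$, whose square has a $\Ga(p,\frac12)$ distribution. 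Note that the tree-\emph{shapes} of the edge trees evolve exactly as in Aldous' construction --- at each addition to index $i$ a segment is chosen with probability proportional to its length and split at a uniform point --- and this is automatic since those choices are scale-invariant; the work is all in the lengths.

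I would build the $\tilde L_i$'s and verify the first two assertions (the conditional decoupling) by induction on $n$, in the spirit of the proof of Lemma~\ref{lem:cont2disc}, using the size-biased Dirichlet manipulations of Lemmas~\ref{lem:dir} and \ref{lem:dirsb}, the beta--gamma algebra of Section~\ref{gamdir}, and the extension of the gamma duplication formula due to \citet{pitman99brownian} (the natural generalisation of the argument proving Proposition~\ref{prop:raydir}, now with the first point of each inhomogeneous Poisson process replaced by its $(\nu_i(n)+1)$th point). The base case $n=0$ is exactly (\ref{eqn:1})--(\ref{eqn:3}): writing $(L_i(0))$ as $(R_i\sqrt{\pi_i})$ with the $R_i$ i.i.d.\ Rayleigh and $(\pi_i)\sim\Dir(\frac12,\dots,\frac12)$ independent, take $\tilde L_i(0):=R_i$ (the first point of an inhomogeneous Poisson process, since $R_i^2\sim\Ga(1,\frac12)$) and $X_i(0):=\sqrt{\pi_i}$. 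For the inductive step, when $i=I(n+1)$ is picked and the increment $A(n+1)$ is added to $L_i(n)$, one must interpret $A(n+1)$ --- whose conditional law depends on the \emph{total} length $C(n)=\sum_jL_j(n)$, not on $i$ --- as producing the next intrinsic inter-jump time of the $i$th process and the updated factor $X_i(n+1):=L_i(n+1)/\tilde L_i(n+1)$, while $\tilde L_j,X_j$ stay fixed for $j\ne i$, and check that the invariant is preserved: conditionally on $(N_1(n+1),\dots,N_m(n+1))$ and $(X_1(n+1),\dots,X_m(n+1))$, the intrinsic edge trees are independent stick-breaking trees. I expect this invariant-preservation to be the main obstacle --- the paper itself stresses that ``the consistency of this construction is far from obvious'' --- precisely because the single increment $A(n+1)$ carries information about all of $C(n)$ whereas the intrinsic increment for edge $i$ must depend, in law, only on $\tilde L_i(n)$, and absorbing the difference into $X_i(n+1)$ has to be done compatibly with the exchangeability and independence of the other colours; this is where the beta--gamma identities do their work.

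Granting the construction, the last assertion $X_i(n)\to\sqrt{P_i}$ follows from two strong laws of large numbers. By Lemma~\ref{lem:convurn} and its proof, $P_i(n)\to P_i$ and $N_i(n)/(m+2n)\to P_i$ almost surely, with $(P_1,\dots,P_m)\sim\Dir(\frac12,\dots,\frac12)$; in particular $P_i>0$, $\nu_i(n)\to\infty$ and $\nu_i(n)/n\to P_i$ a.s. Since $\tilde L_i(n)^2/2\sim\Ga(\frac{N_i(n)+1}{2},1)$ is, by the beta--gamma algebra, a sum of $\nu_i(n)+1$ i.i.d.\ $\Exp(1)$ random variables, $\tilde L_i(n)^2/(2(\nu_i(n)+1))\to1$, whence $\tilde L_i(n)^2/(2n)\to P_i$; similarly $C(n)^2/2\sim\Ga(\frac{m+2n+1}{2},1)$ is, above its fixed initial term, a sum of $n$ i.i.d.\ $\Exp(1)$'s, so $C(n)^2/(2n)\to1$. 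Therefore $\tilde L_i(n)^2/C(n)^2\to P_i$, and, using $L_i(n)=C(n)P_i(n)$,
\[
X_i(n)^2=\frac{L_i(n)^2}{\tilde L_i(n)^2}=\frac{C(n)^2}{\tilde L_i(n)^2}\,P_i(n)^2\;\longrightarrow\;\frac{1}{P_i}\cdot P_i^2=P_i
\]
almost surely, so that $(X_1(n),\dots,X_m(n))\to(\sqrt{P_1},\dots,\sqrt{P_m})$ a.s., which completes the proof.
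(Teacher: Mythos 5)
Your overall plan — rescale each colour's edge tree so that it looks like a stick-breaking tree, extract the scale factor $X_i(n)$, and read off the limit — is the right one, and your argument for the final assertion $(X_1(n),\dots,X_m(n))\to(\sqrt{P_1},\dots,\sqrt{P_m})$ is essentially the same as the paper's (same three strong laws combined multiplicatively). The gap is exactly where you flag it: the first two assertions of the lemma (the existence of the $X_i(n)$ and the conditional decoupling across colours) are never actually established. Your inductive scheme requires splitting the increment $A(n+1)$, whose conditional law depends on the global total $C(n)$, into an intrinsic inter-jump time for colour $i$ together with an updated scale factor $X_i(n+1)$, and you neither define $\tilde L_i(n+1)$ explicitly nor show that the decomposition can be done in a way that leaves the intrinsic processes for the other colours undisturbed and independent. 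Appealing to ``the beta--gamma identities doing their work'' names the obstacle without removing it.

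The paper sidesteps the induction entirely, and its construction is worth internalising. It first makes a one-shot distributional observation valid for every $n$: conditionally on $N_i(n)$, the relative lengths of the $N_i(n)$ segments within colour $i$ are $\Dir(1,\dots,1)$ (an application of Lemma~\ref{lem:dir}), and this vector is independent of $(L_1(n),\dots,L_m(n))$ and of the relative-length vectors of the other colours; by exchangeability of the Dirichlet vector the next attachment to colour $i$ is equally likely to land on any of its $N_i(n)$ segments, so the subtree shapes evolve by R\'emy's algorithm. By Lemma~\ref{aldouslem} this exactly matches the tree-shape and relative-length distribution of the stick-breaking construction run for $H_i(n)$ steps. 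The crucial second idea — and the one your proposal is missing — is that the scale factors are then built from \emph{fresh, external} randomness: take $m$ independent copies $(\lambda_i(k))_{k\ge0}$ of the arrival times of the inhomogeneous Poisson process, independent of the urn, and set $X_i(n):=L_i(n)/\lambda_i(H_i(n))$. Rescaling colour $i$'s tree at time $t_i(k)$ to total length $\lambda_i(k)$ then manifestly produces a copy of $(\mathcal A_k)_{k\ge0}$, and the mutual independence across colours (and independence from the urn process) comes for free because it holds separately for the shapes/relative lengths and for the $\lambda_i$'s. In particular, there is no reason the $X_i(n)$ should be measurable functions of the urn process, which is implicitly what your inductive bookkeeping is trying to achieve; allowing auxiliary randomness dissolves the hard step you identified.
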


We will need the following lemma on the lengths in the stick-breaking construction of the Brownian CRT.  

\begin{lem}\label{aldouslem}
Let $L_1, L_2, \ldots, L_{1+2k}$ be the lengths in the tree created by
the stick-breaking construction of the Brownian CRT up to its $k$th step, so that we have added $k$
branches to the tree.  Then
\[
(L_1, L_2, \ldots, L_{1+2k}) \equidist \sqrt{\Gamma} \cdot  (D_1, D_2, \ldots, D_{1+2k}),
\]
where $(D_1, D_2, \ldots, D_{1+2k}) \sim \Dir(1,1,\ldots,1)$ and
$\Gamma$ is independent with $\mathrm{Gamma}(k+1, 1/2)$ distribution.
\end{lem}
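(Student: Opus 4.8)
The plan is to read off the joint law of $(L_1,\ldots,L_{1+2k})$ from Aldous' identification of the stick-breaking construction with the finite-dimensional distributions of the Brownian CRT, and then to perform an elementary change of variables. Running the stick-breaking construction up to its $k$th step amounts to taking the initial line-segment $J_1$ and attaching the branches $J_2,\ldots,J_{k+1}$, which produces a tree with $1+2k$ edges; by the result of \citet{aldous93crt3} recalled just before Theorem~\ref{thm:aldousrecursive}, this tree has the same distribution --- tree-shape together with edge-lengths --- as the subtree of the Brownian CRT spanned by the root and $k+1$ independent uniform points. The version of the density formula~(\ref{eqn:aldousdens}) for $k+1$ leaves then says that the tree-shape $T$ of this subtree and its lengths $(\ell_v)_{v\in T}$ have joint density proportional to $\bigl(\sum_{v\in T}\ell_v\bigr)\exp\bigl(-\frac{1}{2}(\sum_{v\in T}\ell_v)^2\bigr)$. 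The key point is that this density depends on the lengths only through their sum, so that, conditionally on $T$, the length vector is exchangeable with a law not depending on $T$; hence, after an arbitrary relabelling, $(L_1,\ldots,L_{1+2k})$ has density on $\R_+^{1+2k}$ proportional to $\bigl(\sum_{i=1}^{1+2k}m_i\bigr)\exp\bigl(-\frac{1}{2}(\sum_{i=1}^{1+2k}m_i)^2\bigr)$.

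The remainder is exactly the manipulation carried out in the proof of Lemma~\ref{lem:lengths}. Put $W=\sum_{i=1}^{1+2k}L_i$ and $D_i=L_i/W$; the Jacobian of this change of variables contributes a factor $W^{2k}$, so the joint density of $(W,D_1,\ldots,D_{2k})$ is proportional to $W^{1+2k}e^{-W^2/2}$ on $\{W>0\}$, with $(D_1,\ldots,D_{1+2k})$ uniform on $\Delta_{1+2k}$. Consequently $W$ is independent of $(D_1,\ldots,D_{1+2k})$, the latter has a $\Dir(1,\ldots,1)$ distribution, and $W$ has the density~(\ref{eqn:sqrtgamma}) with $1+2k$ playing the role of $k$ there, i.e.\ $W\equidist\sqrt{\Gamma}$ with $\Gamma\sim\mathrm{Gamma}(k+1,\frac12)$. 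Hence $(L_1,\ldots,L_{1+2k})\equidist\sqrt{\Gamma}\,(D_1,\ldots,D_{1+2k})$ with $\Gamma$ independent of the Dirichlet vector, as asserted. (Consistently, since attaching a branch at an interior point merely splits an existing segment, $W=\sum_{i=1}^{1+2k}L_i=\sum_{i=1}^{k+1}J_i$ is the time of the $(k+1)$th point of the inhomogeneous Poisson process of rate $t$ at $t$, and the time change $u=t^2/2$ makes that process standard, so indeed $W^2\sim\mathrm{Gamma}(k+1,\frac12)$.)

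I do not foresee a genuine obstacle; the content is bookkeeping. The two things to be careful about are matching ``$k$ branches added'' with ``$k+1$ uniform points'' --- so that the number of segments comes out as $1+2k$ and the Gamma shape parameter as $k+1$ rather than $k$ --- and observing that~(\ref{eqn:aldousdens}), recorded in the excerpt for a subtree on a given number of leaves, holds verbatim with that number equal to $k+1$. If one prefers not to cite~(\ref{eqn:aldousdens}) as a black box, the same joint density for $(L_1,\ldots,L_{1+2k})$ follows by repeating, in the untilted and identification-free setting, the computations of Lemmas~\ref{lem:treeshapelengths} and~\ref{lem:pre-corelengths}; either route reduces the lemma to the one-line change of variables above.
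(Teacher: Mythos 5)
Your proof is correct and follows essentially the same route as the paper's: start from Aldous' density $\propto\bigl(\sum_i\ell_i\bigr)\exp\bigl(-\frac12(\sum_i\ell_i)^2\bigr)$ for the $1+2k$ lengths, then change variables to $W=\sum_i L_i$ and $D_i=L_i/W$ with Jacobian $W^{2k}$ to read off independence, the $\Dir(1,\ldots,1)$ law, and $W^2\sim\mathrm{Gamma}(k+1,\frac12)$. The only cosmetic difference is that you derive the length density from~(\ref{eqn:aldousdens}) via the equivalence with the CRT subtree spanned by $k+1$ uniform points, whereas the paper simply cites Aldous (1991) directly for it.
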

\begin{proof}
From Aldous~\cite{aldous91crt2}, we have that $L_1, L_2, \ldots, L_{1+2k}$ have joint density
\[
\left( \sum_{i=1}^{1+2k} \ell_i \right) \exp \Bigg( - \frac{1}{2} \left( \sum_{i=1}^{1+2k} \ell_i \right)^2 \Bigg).
\]
Let $W = L_1 + L_2 + \dots + L_{1+2k}$, $D_i = L_i / W$ for $1 \le i \le 2k$ and $D_{1 + 2k} = 1 - D_1 - D_2 - \dots - D_{2k}$.  Then these random variables have joint density proportional to $w^{2k+1} e^{-w^2/2}$.
The result follows.
\end{proof}

\begin{proof}[Proof of Lemma \ref{lem:convproc}]
Fix $n \geq 1$ and $1 \leq i \leq m$. We remark that we may think of $L_i(n)$ as composed of $N_i(n)$ segments 
of lengths 
\[L_i^1(n),\ldots,L_i^{N_i(n)}(n).
\]
These are the lengths of the line-segments which make up the subtree $T_i(n)$ corresponding to the $i$th core edge at step $n$ in the construction. Let $H_i(n)$ be the number of times that the $i$th core edge has been hit; since each addition creates two new line segments, we have $H_i(n)=(N_i(n)-1)/2$.
An argument using Lemma~\ref{lem:dir} shows that for all $n$,  conditionally on $N_i(n)$,
\begin{equation}\label{exactlyright}
\bigg(\frac{L_i^1(n)}{L_i(n)},\ldots,\frac{L_i^{N_i(n)}(n)}{L_i(n)}\bigg) \sim  \Dir(1,1,\ldots,1),
\end{equation}
and this vector is independent of $(L_1(n),\ldots,L_m(n))$ and from
$L_j^1(n)/L_j(n),\ldots,L_j^{N_j(n)}(n)/L_j(n)$,
for $j \neq i$. Since the elements of this vector are exchangeable, 
it follows immediately that the next segment to choose colour $i$ is equally likely to attach to any 
of the $N_i(n)$ segments. Thus, at all times $n$, the tree shape of the subtree $T_i(n)$ composed 
of segments corresponding to balls of colour $i$ is uniform over Catalan trees with $1+H_i(n)$ leaves (in fact, if we ignore the edge lengths, this is precisely R\'emy's algorithm to generate a uniform Catalan tree \cite{Remy1985}). 

Furthermore, by Lemma~\ref{aldouslem}, (\ref{exactlyright}) is exactly the right 
distribution for the {\em relative} lengths of edges in the tree $\mathcal A_{H_i(n)}$ created by running the stick-breaking construction of the Brownian CRT for $H_i(n)$ steps. 

Now, for each $1 \leq i \leq m$, let $(\lambda_i(n))_{n \ge 0}$ be an independent copy of the sequence of arrival times of an inhomogeneous 
Poisson process of instantaneous rate $t$ at time $t$ (with $\lambda_i(0)>0$ the first arrival time), and let 
$$(X_i(n))_{n \ge 0} = \left(\frac{L_i(n)}{\lambda_i(H_i(n))}\right)_{n \ge 0}.$$ 
Also, for each $1 \le i \le m$ and each $k \ge 0$, let 
$t_i(k) = \min\{n:H_i(n) = k\},$
so $(t_i(k))_{k \ge 0}$ is the sequence of times at which $N_i(n)$ increases.
Write $T_i^*(k)$ for the tree $T_i(t_i(k))$, above, rescaled to have total length $\lambda_i(k)$. It follows that $(T_i^*(k))_{k \ge 0}$ is a copy of the sequence of trees $\{\mathcal A_k\}_{k \ge 0}$ 
created by the stick-breaking construction of the Brownian CRT, and is 
independent of $(L_1(n),\ldots,L_m(n))_{n \ge 0}$, $(N_1(n),\ldots,N_m(n))_{n \ge 0}$, and $(T_j^*(k))_{k \ge 0}$ for $j \neq i$. 
The first two claims in the lemma are then immediate. 

We have $X_i(n)=L_i(n)/\lambda_i(H_i(n))$.  We can write this as
\begin{equation} \label{eqn:split}
X_i(n)=\frac{\sqrt{N_i(n) - 1}}{\lambda_i(H_i(n))} \times P_i(n) \times \frac{C(n)}{\sqrt{m+2n+1}} \times \sqrt{\frac{m+2n+1}{N_i(n)-1}}.
\end{equation}
From Lemma~\ref{lem:convurn}, we have $P_i(n) \to P_i$ almost surely.
Recall from Lemma~\ref{aldouslem} that $\lambda_i(H_i(n))^2 \sim \Ga(H_i(n)+1,\frac 12)$. 
Since $\lambda_i(k) \nearrow \infty$ as $k \to \infty$ and $H_i(n),N_i(n) \to \infty$ almost surely, it follows that 
\[
\frac{\lambda_i(H_i(n))}{\sqrt{N_i(n)-1}} \sim \frac{\lambda_i(H_i(n))}{\sqrt{2 (H_i(n)+1)}} \to 1~\text{almost surely}.
\]
Similarly, from Lemma~\ref{lem:densities} we have $C(n)^2 \sim \Ga((m+2n+1)/2,1/2)$ and $C(n) \nearrow \infty$ almost surely and so
\[
\frac{C(n)}{\sqrt{m+2n+1}} \to 1~\text{almost surely}.
\]
In the proof of Lemma~\ref{lem:convurn}, we showed that
\[
\frac{N_i(n)}{n+2m} \to P_i~\text{almost surely.}
\]
Putting these facts together in (\ref{eqn:split}), it follows that $X_i(n) \to \sqrt{P_i}$ almost surely, completing the proof. 
\end{proof}

We can summarize/rephrase the results of Lemmas \ref{lem:convurn} and \ref{lem:convproc} as the following counterpart of the classical limit result for urn models \cite{BlKe1964, Freedman1965} (which is usually proved using de Finetti's theorem \cite{Finetti1931}).  We do not know of a pre-existing reference for this result in the literature.

\begin{thm} \label{thm:summary}
Consider the balls-in-urns model described at the beginning of the section, with quantities $L_1(n), L_2(n), \ldots, L_m(n)$ of the $m$ different colors present at step $n$.  The proportions of the different colors present converge almost surely to $(P_1, P_2, \ldots, P_m) \sim \Dir(\frac 12,\frac 12,\ldots,\frac 12)$.  Moreover, conditional on $(P_1, P_2, \ldots, P_m)$, the indices $I(1), I(2), \ldots$ are independent and identically distributed with
\[
\Cprob{I(1) = i }{ P_1, P_2, \ldots, P_m} = P_i, \quad 1 \le i \le m.
\]
Finally, conditional on $(P_1, P_2, \ldots, P_m)$ the sequence of additions to color $i$ is the sequence of inter-jump times of an inhomogeneous Poisson process with rate $t$ at time $t$, rescaled by $\sqrt{P_i}$, for $1 \le i \le m$.
\end{thm}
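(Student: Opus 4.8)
The plan is to obtain Theorem~\ref{thm:summary} by assembling Lemmas~\ref{lem:convurn} and~\ref{lem:convproc}; the one ingredient not literally contained in those lemmas is the passage from the P\'olya-urn description of the index sequence to a conditionally i.i.d.\ description, which I would supply with a de Finetti argument. The almost sure convergence of $(P_1(n),\dots,P_m(n))$ to a $\Dir(\frac12,\dots,\frac12)$-distributed limit $(P_1,\dots,P_m)$ is exactly Lemma~\ref{lem:convurn}, so the first assertion needs no further work.

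For the index sequence I would first note, via Lemma~\ref{lem:cont2disc}, that $(N_1(n),\dots,N_m(n))_{n\ge0}$ is a classical P\'olya urn (one ball of each colour to start; the drawn ball returned with two more of its colour) and that $I(n)$ is just the colour drawn at step $n$. The colour sequence of such an urn is exchangeable: the probability of a prescribed sequence of draws equals $\prod_{k=0}^{n-1}(m+2k)^{-1}$ times $\prod_{c}(2N_c-1)!!$, where $N_c$ is the total number of times colour $c$ is drawn, so it depends only on the colour-count vector. By de Finetti's theorem $(I(n))_{n\ge1}$ is therefore, conditionally on its directing random measure, an i.i.d.\ sequence, and that directing measure is the almost sure limit of the empirical colour frequencies. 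Since $N_i(n)=1+2\sum_{k\le n}\I{I(k)=i}$, the empirical frequency of colour $i$ is $(N_i(n)-1)/(2n)$, which tends a.s.\ to $P_i$ by the computation $N_i(n)/(m+2n)\to P_i$ already performed inside the proof of Lemma~\ref{lem:convurn}. Hence the directing measure is $(P_1,\dots,P_m)$, which yields $\Cprob{I(1)=i}{P_1,\dots,P_m}=P_i$ together with the asserted conditional independence of the indices.

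For the statement about the additions I would invoke Lemma~\ref{lem:convproc} and let $n\to\infty$. Its proof already constructs, on one probability space, independent copies $(\lambda_1(k))_{k\ge0},\dots,(\lambda_m(k))_{k\ge0}$ of the sequence of arrival times of an inhomogeneous Poisson process of rate $t$ at time $t$, independent of the urn, together with scale factors $X_i(n)=L_i(n)/\lambda_i(H_i(n))$ under which the subtree grown along colour $i$, rescaled, is at every stage a copy of the corresponding Brownian CRT stick-breaking tree, these copies being mutually independent and independent of $(N_1(n),\dots,N_m(n))_{n\ge0}$, with $X_i(n)\to\sqrt{P_i}$ a.s. Passing to the limit, the sequence of lengths the urn adds to colour $i$ (including the initial length $L_i(0)$) is a copy of the inter-jump sequence of the inhomogeneous Poisson process rescaled by the limiting factor $\sqrt{P_i}$; these sequences are independent for distinct $i$ and, by the independence of the copies from the urn, independent of the index sequence given $(P_1,\dots,P_m)$. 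Collecting the three pieces proves the theorem.

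The step I expect to be the main obstacle is the de Finetti argument, precisely because Lemmas~\ref{lem:convurn}--\ref{lem:convproc} say nothing about the \emph{joint} law of the index sequence: one must verify exchangeability of $(I(n))_{n\ge1}$ honestly and then identify the directing measure with the $\Dir(\frac12,\dots,\frac12)$ limit, the delicate point being that this directing measure is the limit of the frequencies $(N_i(n)-1)/(2n)$, which has to be matched to the limit of $N_i(n)/(m+2n)$ coming from Lemma~\ref{lem:convurn}. A lesser subtlety is that the rescaling $X_i(n)$ in Lemma~\ref{lem:convproc} depends on $n$, so the final assertion is genuinely a limiting statement and one must use $X_i(n)\to\sqrt{P_i}$ to replace it by the constant $\sqrt{P_i}$.
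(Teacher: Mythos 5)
Your proposal is correct and follows the paper's own route: the theorem is stated in the paper as a summary of Lemmas~\ref{lem:convurn} and~\ref{lem:convproc}, and you fill in the one gap (the conditionally i.i.d.\ index claim) with the standard exchangeability/de Finetti argument for the P\'olya urn, identifying the directing measure with $(P_1,\dots,P_m)$ via the a.s.\ limit $N_i(n)/(m+2n)\to P_i$ already obtained in Lemma~\ref{lem:convurn}. Your treatment of the third assertion, passing from the $n$-dependent scale factors $X_i(n)$ to the limiting $\sqrt{P_i}$ via Lemma~\ref{lem:convproc}, matches what the paper intends by ``summarize/rephrase.''
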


\noindent \textbf{Remark.} The gamma duplication formula (\ref{eqn:gammadup}) played a central role in the proof of Proposition~\ref{prop:raydir}. Equation (65) of \citet{pitman99brownian} states the following generalization.  Suppose that for $r > 0$ and $s = 1,2,\ldots$, $A \sim \Ga(r,\frac 12)$ and $B \sim \Ga(r+s-\frac 12,\frac 12)$.  Suppose that $J_{r,s}$ has distribution
\[
\Prob{J_{r,s} = j} = \frac{(2s-j-1)! (2r)_{j-1}}{(s-j)! (j-1)! 2^{2s-j-1}(r+ \frac{1}{2})_{s-1}},
\]
where $(x)_n = x(x+1)(x+2)\cdots(x+n-1) = \Gamma(x+n)/\Gamma(x)$.  Finally, conditional on $J_{r,s}$, let $C$ have $\Ga(2r+J_{s,r}-1,1)$ distribution.  Then
\begin{equation} \label{eqn:gammadup2}
AB \equidist C^2.
\end{equation}
Suppose now that $(P_1, P_2, \ldots, P_m) \sim \Dir(\frac 12,\frac 12,\ldots,\frac 12)$ and let $(M_1(n), M_2(n), \ldots, M_m(n)) \sim \mathrm{Multinomial}(n;P_1, P_2, \ldots, P_m)$.  For $1 \leq i \leq m$, let $G_i(n) \sim \Ga(1+ M_i(n), \frac 12)$ independently.  Let $\Gamma_n \sim \Ga((m+2n+1)/2,1/2)$.  Let $N_1(n), N_2(n), \ldots, N_m(n)$ be the number of balls at step $n$ of P\'olya's urn model started with one ball of each color and such that each ball picked is replaced along with two more of the same color.  Finally, let $(P_1(n), P_2(n), \ldots, P_m(n)) \sim \Dir(N_1(n), N_2(n), \ldots, N_m(n))$.  Then as a consequence of Lemma~\ref{aldouslem} and Theorem~\ref{thm:summary}, we have
\[
(\sqrt{P_1 G_1(n)}, \sqrt{P_2 G_2(n)}, \ldots, \sqrt{P_mG_m(n)}) \equidist \sqrt{\Gamma_n} (P_1(n), P_2(n), \ldots, P_m(n)).
\]
It seems likely that some version of (\ref{eqn:gammadup2}) for appropriate values of $r$ and $s$ is hidden in this distributional relationship.
\newline

We can, at last, complete the proofs of Theorems~\ref{thm:limit_kernel} and~\ref{thm:unicyclic}.

\begin{proof}[Proof of Theorem~\ref{thm:limit_kernel}]
Theorem \ref{thm:limit_kernel} (a) was already proved in the course of proving Theorem \ref{thm:recursiveconstruction}. Part (b) follows from Theorem \ref{thm:recursiveconstruction} as the construction procedure in that theorem almost surely places no mass at the vertices of the kernel. Part (c) is immediate from Lemma \ref{lem:convurn}, and (d) and (e) are immediate from Lemma~\ref{lem:convproc} and Theorem~\ref{thm:aldousrecursive}. 
\end{proof}

\begin{proof}[Proof of Theorem~\ref{thm:unicyclic}]
Theorem~\ref{thm:unicyclic} (a) is precisely the identity (\ref{eqn:4}), and (b) follows from the second part of Lemma \ref{lem:lengths}, Lemma \ref{lem:convproc} and Theorem~\ref{thm:aldousrecursive}. 
\end{proof}

Finally, as mentioned earlier, the validity of Procedure 1 is a consequence of Theorems~\ref{thm:limit_kernel} and~\ref{thm:unicyclic}.

\setlength{\bibsep}{.3em}
\bibliographystyle{plainnat}
\bibliography{bib_dlcrg}

\hspace{1cm}

\begin{tabbing}
\textsc{Louigi Addario-Berry} \\                    
Department of Mathematics and Statistics\\ 
McGill University\\                        
Burnside Hall, Room 1219\\                
805 Sherbrooke W. \\
Montr\a'eal, QC \\
H3A 2K6, Canada \\
\texttt{louigi@gmail.com}\\
\texttt{http://www.math.mcgill.ca/louigi/} \\ 
\\
\textsc{Nicolas Broutin} \\
Projet Algorithms \\ 
INRIA Rocquencourt  \\
78153 Le Chesnay \\
France \\
\texttt{nbrout@cs.mcgill.ca} \\
\texttt{http://algo.inria.fr/broutin/} \\
\\
\textsc{Christina Goldschmidt}\\
Department of Statistics \\
University of Warwick\\
Coventry\\
CV4 7AL\\
UK\\
\texttt{C.A.Goldschmidt@warwick.ac.uk} \\
\texttt{http://www.warwick.ac.uk/\~{}stsiac}
\end{tabbing}

\end{document}